\numberwithin{equation}{section}
\definecolor{grey}{rgb}{.7,.7,.7}
\definecolor{mygreen}{rgb}{0.1,0.75,0.2}
\newcommand{\xupref}[2]{\hspace{-0.3ex}\stackrel{\eqref{#1}}{#2}} 
\newtheorem{theorem}{Theorem}[section]
\newtheorem{proposition}[theorem]{Proposition}
\newtheorem{lemma}[theorem]{Lemma}
\theoremstyle{remark}
\newtheorem{remark}[theorem]{Remark}
\theoremstyle{definition}
\newcommand{\e}{\varepsilon}
\newcommand{\eps}{\varepsilon}
\newcommand{\vphi}{\varphi}
\newcommand{\N}{\mathbb N}
\newcommand{\R}{\mathbb R}
\newcommand{\Lu}{\mathcal{L}^{1}}
\newcommand{\ds}{\displaystyle}
\newcommand{\ts}{\textstyle}
\newcommand{\loc}{\mathrm{loc}}
\newcommand{\wto}{\rightharpoonup}
\newcommand{\dd}{\,\mathrm{d}}
\renewcommand{\setminus}{\backslash}
\newcommand{\defeq}{\coloneqq}
\renewcommand{\leq}{\leqslant}
\renewcommand{\geq}{\geqslant}
\newcommand{\BV}{\mathrm{BV}}
\newcommand{\SBV}{\mathrm{SBV}}
\newcommand{\F}{\mathcal{F}}
\newcommand{\Feps}{\mathcal{F}_\e}
\newcommand{\G}{\mathcal{G}}
\newcommand{\Gh}{\widehat{\mathcal{G}}}
\newcommand{\fbar}{\bar{f}}
\newcommand{\stress}{\sigma_{\mathrm c}}
\newcommand{\sfrac}{s_{\mathrm{frac}}}
\newcommand{\mbar}{\bar{m}}
\newcommand{\midp}{{\textstyle\frac{L}{2}}}
\title[Critical points of cohesive fracture energies]{Convergence of critical points for a phase-field approximation of 1D cohesive fracture energies}
\author{Marco Bonacini}
\address[Marco Bonacini]{Department of Mathematics, University of Trento, Italy}
\email{marco.bonacini@unitn.it}
\author{Flaviana Iurlano}
\address[Flaviana Iurlano]{Università degli Studi di Genova, Dipartimento di Matematica, Genoa, Italy}
\email{flaviana.iurlano@unige.it}
\date{\today}                                        
\subjclass[2020]{}
\keywords{}
\thanks{This is a preprint version of an article that has been accepted for publication in \textit{Calculus of Variations and Partial Differential Equations}, published by Springer.
}
\begin{document}

\begin{abstract}
Variational models for cohesive fracture are based on the idea that the fracture energy is released gradually as the crack opening grows. Recently, \cite{ConFocIur16} proposed a variational approximation via $\Gamma$-convergence of a class of cohesive fracture energies by phase-field energies of Ambrosio-Tortorelli type, which may be also used as regularization for numerical simulations. In this paper we address the question of the asymptotic behaviour of critical points of the phase-field energies in the one-dimensional setting: we show that they converge to a selected class of critical points of the limit functional. Conversely, each critical point in this class can be approximated by a family of critical points of the phase-field functionals.
\end{abstract}

\maketitle


\section{Introduction}\label{sec:intro}

Fracture models describe the evolution of surface cracks in elastic materials subjected to external loads or boundary conditions. The literature distinguishes between brittle models and cohesive models (also known as Griffith and Barenblatt models respectively).
The former treat fracture as an instantaneous phenomenon: the body deforms elastically until a crack surface appears; the crack energy is instantaneously released and there is no transmission of force across the crack surface. The latter treat fracture as a gradual phenomenon: the bonds between the lips progressively weaken; the crack energy is released with the growth of the crack opening and the force transmitted across the crack surface gradually reduces to zero. Thanks to these features, cohesive models are better suited than brittle models for describing crack nucleation. We refer the interested reader to the book \cite{BFM} and references therein for a comprehensive comparison between brittle and cohesive, and we work from now on in the cohesive setting.

The variational study of cohesive fracture started in the late 1990s and has been earning interest ever since \cite{Alm17,ACFS,BonConIur21,BCFR2022,BraDMGar99,CCF,C,CT,CLO,DMG,DMZ,DelPie13,DPT1998,DelPieTru09,LS,NS2017,NS2021,R2023,R2019,T2018,TZ2017}. The appropriate variational setting to model a cohesive fracture process was shown to be the space of functions of bounded variation or bounded deformation, allowing to describe the crack as the jump set of a discontinuous displacement, and the total energy as a competition between bulk and surface contributions \cite{FM1998}. The presence of free discontinuities, making the numerical treatment highly complex, lead to the development of regularized phase-field theories \cite{ABS,AF,BonConIur21,ConFocIur16,ConFocIur22,DMOT}, in the spirit of the classical Allen-Cahn (or Modica-Mortola) approximation for phase transitions \cite{M1987}, and the Ambrosio-Tortorelli approximation of the Mumford-Shah functional for image segmentation or brittle fracture \cite{AmbTor90,AmbTor92,G2005}. The general approach of these works is to construct sequences of purely bulk energies, whose variables are forced to engage transitions in thin concentration sets, and to show the convergence of corresponding global minimizers to a global minimizer of the given energy as the thickness of the concentration sets vanishes.

Although this kind of results usually marks decisive enhancements in the mathematical comprehension of the corresponding phenomena, its energy-based formulation and its global minimization focus may be not completely satisfactory from the mechanical point of view. Indeed, fracture evolution might realistically occur along critical states rather than following global minimizers. In addition, numerical schemes based on alternate minimization for the regularized energies typically converge to critical points of the limit energy; hence, the sole convergence of global minimizers does not provide a complete theoretical justification of the adoption of the phase-field models for numerical simulations, see for example \cite{AleMarVid14,Bourdin,BFM,FreIur,LCM,Wu,WuNgu}.

This motivated the investigation of better converging properties of the proposed regular functionals. On the one side, it lead to the study of the convergence of the corresponding gradient flows, see \cite{AlmBelNeg19,BabMill2014} for brittle fracture. On the other side, it lead to the study of the convergence of critical points, see in particular \cite{HutTon00,LeSte19,LucMod89,PadTon98,RoeTon08,Ton02,Ton05} for the Allen-Cahn functionals, and \cite{BabMilRodb,BabMilRoda,FraLeSer09,Le10} for the Ambrosio-Tortorelli functionals.

In this paper we address the latter question in the context of one-dimensional cohesive fracture: we study the asymptotic behaviour of the critical points of the regularized functionals proposed in \cite{ConFocIur16}. Thanks to its nature, the cohesive case allows for a deeper and more complete analysis with respect to the brittle case. Our approach heavily relies on one-dimensional arguments and the analysis is at the moment limited to this setting; its possible extension to the higher-dimensional case, in the spirit of the recent work \cite{BabMilRoda} in the context of brittle fracture, poses significant challenges.

The rest of this Introduction is organized as follows: in Section~\ref{subsec:critpoints} we provide notation and properties of the sharp cohesive model and of its critical points; in Section~\ref{subsec:approx} we introduce our regularized models, which are slight modifications of those proposed in \cite{BonConIur21,ConFocIur16}. A few additional technical assumptions will be needed. Our main results will be stated in Section~\ref{subsec:results}.


\subsection{The cohesive fracture energy and its critical points} \label{subsec:critpoints}
We first introduce a one-dimensional cohesive fracture energy for a bar of total length $L>0$ (at rest), and total elongation $a>0$.

The deformation of the bar is described by a function of bounded variation $u\in\BV(0,L)$, whose distributional derivative is a bounded Radon measure on $(0,L)$ that can be written as 
$$
Du = u'\dd x + D^cu + \sum_{x\in J_u}[u](x)\delta_x,
$$
where $u'\in L^1(0,L)$ denotes the density of the absolutely continuous part (with respect to the Lebesgue measure), $D^cu$ is the Cantor part, $[u](x)\defeq u(x^+)-u(x^-)$, where $u(x^+)$ and $u(x^-)$ are the approximate limits from the right and from the left of $u$ at $x$ respectively, and $J_u\defeq\{x\in(0,L) : [u](x)\neq 0\}$ is the set of essential discontinuities. Since we want to include in the energy the boundary conditions, we set $u(0^-)=0$, $u(L^+)=a$, we extend the definition of $[u](x)\defeq u(x^+)-u(x^-)$ also at the endpoints $x=0$, $x=L$, and we let $J_u^a\defeq\{x\in[0,L] : [u](x)\neq 0\}$.

The \emph{cohesive energy} of the bar is defined as
\begin{equation} \label{def:F}
\Phi(u) \defeq \int_0^L \phi(u')\dd x + \sum_{x\in J_u^a}g(|[u](x)|) + \stress|D^cu|(0,L) \qquad \text{for } u\in\BV(0,L).
\end{equation}
Here $\stress\in(0,+\infty)$ and the \emph{elastic energy density} $\phi:\R\to[0,+\infty)$ is given by
\begin{equation} \label{def:phi}
\phi(\xi)\defeq
{\begin{cases}
\xi^2
& \text{if } |\xi| \leq \frac{\stress}{2},\\
\stress|\xi| - \frac{\stress^2}{4}
& \text{if  } |\xi| > \frac{\stress}{2}.
\end{cases}}
\end{equation}
For the \emph{cohesive energy density} we assume that $g:[0,+\infty)\to[0,+\infty)$ is a nondecreasing function of class $\mathrm{C}^1$ with $g(0)=0$, $g'(0)=\stress$. We further assume that $\{g'>0\}=[0,\sfrac)$ for some $\sfrac\in(0,+\infty]$, and that $g'$ is strictly decreasing in $[0,\sfrac)$.

It is convenient to include in the energy also the non-interpenetration constraint that the singular part $D^su\defeq D^cu + \sum_{x\in J_u^a}[u](x)\delta_x$ of $Du$ is a nonnegative measure: we therefore define $\widetilde{\Phi}(u)\defeq\Phi(u)$ if $u\in\BV(0,L)$ with $D^su\geq0$ (so that in particular $u(0^+)\geq0$, $u(L^-)\leq a$), and $\widetilde{\Phi}(u)=+\infty$ otherwise.

\emph{Critical points} of the functional $\widetilde{\Phi}$ are functions $u$ such that $\widetilde{\Phi}(u)<+\infty$ and
$$
\liminf_{t\to 0^+}\frac{\widetilde{\Phi}(u+tv)-\widetilde{\Phi}(u)}{t} \geq 0 \qquad\text{for all }v\in \BV(0,L).
$$
These are studied in details in \cite{BraDMGar99} (see also \cite{DPT1998}). We stress that nonnegativity of the unilateral lower limit as $t\to0^+$ is required in place of the usual vanishing of the first variation. This is a standard way to give a meaningful notion of critical point in presence of a noninterpenetration constraint.
Mechanically, such condition provides a critical stress to nucleation, in the sense that nucleation of a crack point is only possible when the stress reaches the critical value.

By \cite[Theorem~6.3]{BraDMGar99} one has that a function $u\in\BV(0,L)$ with $D^su\geq0$ is a critical point of $\widetilde{\Phi}$ if and only if there exists a constant $\sigma\in\R$ such that
\begin{enumerate}
\item $\sigma\leq\stress$,
\item $\phi'(u')=\sigma$ a.e.\ in $(0,L)$,
\item $g'([u])=\sigma$ on $J_u^a$,
\item $D^cu=0$ if $\sigma<\stress$.
\end{enumerate}
As observed in \cite[Remark~6.6]{BraDMGar99}, in the model described by the energy $\widetilde{\Phi}$ the quantity $\phi'(u')$ represents the stress in the elastic part of the bar due to the deformation gradient $u'$; $g'([u](x))$ represents the stress transmitted through the points of $J_u^a$ of the reference configuration (where there is concentration of the strain); $D^cu$ can be seen as a singular, not concentrated strain which transmits a stress $\stress$ and can be present only if $\sigma=\stress$, whereas if $\sigma<\stress$ a critical point is necessarily in $\SBV(0,L)$. The constant $\stress$ is the maximum possible stress for an equilibrium configuration.

In view of the previous conditions, we can give an explicit description of all critical points belonging to $\SBV(0,L)$ corresponding to a prescribed elongation $a>0$ (see Figure~\ref{fig:critical-points}):
\begin{enumerate}[label = (\alph*)]
\item (\emph{elastic states}) $u(x)=\frac{a}{L}x$ in $(0,L)$;
\item (\emph{pre-fractured states}) $\sigma\in(0,\stress)$, $\# J_u^a=k$, $1\leq k <+\infty$, $u'=\frac{\sigma}{2}$ a.e.\ in $(0,L)$, $[u](x)=s_0$ for all $x\in J_u^a$ where $s_0\in(0,\sfrac)$ obeys $g'(s_0)=\sigma$, and $\frac{\sigma}{2}L+ks_0=a$;
\item (\emph{fractured states}) $\sigma=0$, $\# J_u^a=k$, $1\leq k <+\infty$, $u'=0$ a.e.\ in $(0,L)$, and $[u](x)\geq\sfrac$ for all $x\in J_u^a$, with $\sum_{x\in J_u^a}[u](x)=a$.
\end{enumerate}
Notice that in the first case $\sigma=\frac{2a}{L}\wedge\stress\in[0,\stress]$ and all the possible slopes $u'=\frac{a}{L}$ are allowed; if $\frac{2a}{L}\geq\stress$ then $\sigma=\stress$. In the second case $u$ is piecewise affine with constant slope $\frac{\sigma}{2}$ and with a finite number of jumps of the same amplitude. In the third case $\sigma=0$, $u$ is piecewise constant with a finite number of jumps with amplitude larger than $\sfrac$, and this case can only occur if $\sfrac<+\infty$ and $a\geq\sfrac$.

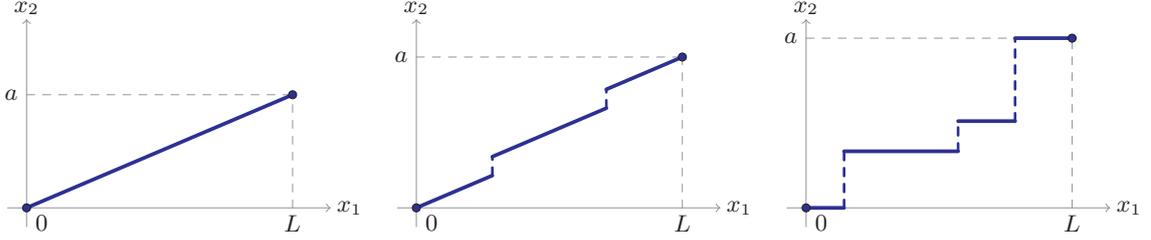
\begin{figure}
\definecolor{qqqqff}{rgb}{0,0,1}
\definecolor{qqwuqq}{rgb}{0,0.4,0}
\begin{tikzpicture}[scale=0.5,line cap=round,line join=round]
\clip(-1,-1) rectangle (9,7);
\draw [->,line width=0.3pt,color=black!50] (-0.5,0)-- (8,0);
\draw [->,line width=0.3pt,color=black!50] (0,-0.5)-- (0,5);
\draw [line width=0.3pt,color=black!50,dash pattern=on 3pt off 3pt] (7,0)-- (7,3);
\draw [line width=0.3pt,color=black!50,dash pattern=on 3pt off 3pt] (0,3)-- (7,3);
\draw [line width=1.4pt,color=qqqqff] (0,0)-- (7,3);
\begin{footnotesize}
\draw (8.5,0) node {$x_1$};
\draw (0,5.3) node {$x_2$};
\draw [fill=qqqqff] (7,3) circle (3pt);
\draw [fill=qqqqff] (0,0) circle (3pt);
\draw (0.4,-0.4) node {$0$};
\draw (7,-0.4) node {$L$};
\draw (-0.4,3) node {$a$};
\end{footnotesize}
\end{tikzpicture}
\begin{tikzpicture}[scale=0.5,line cap=round,line join=round]
\clip(-1,-1) rectangle (9,7);
\draw [->,line width=0.3pt,color=black!50] (-0.5,0)-- (8,0);
\draw [->,line width=0.3pt,color=black!50] (0,-0.5)-- (0,5);
\draw [line width=0.3pt,color=black!50,dash pattern=on 3pt off 3pt] (7,0)-- (7,4);
\draw [line width=0.3pt,color=black!50,dash pattern=on 3pt off 3pt] (0,4)-- (7,4);
\draw [line width=1.4pt,color=qqqqff] (0,0)--(2,6/7);
\draw [line width=1pt,color=qqqqff,dashed] (2,6/7)--(2,19/14);
\draw [line width=1.4pt,color=qqqqff] (2,19/14)--(5,37/14);
\draw [line width=1pt,color=qqqqff,dashed] (5,37/14)--(5,22/7);
\draw [line width=1.4pt,color=qqqqff] (5,22/7)--(7,4);
\begin{footnotesize}
\draw (8.5,0) node {$x_1$};
\draw (0,5.3) node {$x_2$};
\draw [fill=qqqqff] (7,4) circle (3pt);
\draw [fill=qqqqff] (0,0) circle (3pt);
\draw (0.4,-0.4) node {$0$};
\draw (7,-0.4) node {$L$};
\draw (-0.4,4) node {$a$};
\end{footnotesize}
\end{tikzpicture}
\begin{tikzpicture}[scale=0.5,line cap=round,line join=round]
\clip(-1,-1) rectangle (9,7);
\draw [->,line width=0.3pt,color=black!50] (-0.5,0)-- (8,0);
\draw [->,line width=0.3pt,color=black!50] (0,-0.5)-- (0,5);
\draw [line width=0.3pt,color=black!50,dash pattern=on 3pt off 3pt] (7,0)-- (7,4.5);
\draw [line width=0.3pt,color=black!50,dash pattern=on 3pt off 3pt] (0,4.5)-- (7,4.5);
\draw [line width=1.4pt,color=qqqqff] (0,0)--(1,0);
\draw [line width=1pt,color=qqqqff,dashed] (1,0)--(1,1.5);
\draw [line width=1.4pt,color=qqqqff] (1,1.5)--(4,1.5);
\draw [line width=1pt,color=qqqqff,dashed] (4,1.5)--(4,2.3);
\draw [line width=1.4pt,color=qqqqff] (4,2.3)--(5.5,2.3);
\draw [line width=1pt,color=qqqqff,dashed] (5.5,2.3)--(5.5,4.5);
\draw [line width=1.4pt,color=qqqqff] (5.5,4.5)--(7,4.5);
\begin{footnotesize}
\draw (8.5,0) node {$x_1$};
\draw (0,5.3) node {$x_2$};
\draw [fill=qqqqff] (7,4.5) circle (3pt);
\draw [fill=qqqqff] (0,0) circle (3pt);
\draw (0.4,-0.4) node {$0$};
\draw (7,-0.4) node {$L$};
\draw (-0.4,4.5) node {$a$};
\end{footnotesize}
\end{tikzpicture}
	\caption{Critical points in $\SBV(0,L)$ of the cohesive energy \eqref{def:F}. Left: elastic state $u(x)=\frac{a}{L}x$. Center: pre-fractured state with $\sigma\in(0,\stress)$. Right: fractured state with $\sigma=0$.}
	\label{fig:critical-points}
\end{figure}

\begin{remark}\label{rem:relaxation}
By the results in \cite{BouBraBut95} the functional $\widetilde{\Phi}$ is the lower semicontinuous envelope with respect to the $L^1$-convergence of the energy
\begin{equation} \label{def:energy-not-relaxed}
u \mapsto \int_0^L |u'|^2\dd x + \sum_{x\in J_u^a}g(|[u](x)|) \qquad\text{for }u\in\SBV(0,L)\text{ with }[u]\geq0.
\end{equation}
As it is observed in \cite[Remark~6.5]{BraDMGar99}, every critical point of this functional is also a critical point of $\widetilde{\Phi}$, and every critical point of $\widetilde{\Phi}$ with $\sigma<\stress$ is a critical point of \eqref{def:energy-not-relaxed}. For $\sigma=\stress$, there are critical points of $\widetilde{\Phi}$ which are not critical for \eqref{def:energy-not-relaxed}: in particular, elastic states $u(x)=\frac{a}{L}x$ with $\frac{2a}{L}>\stress$, and critical points with $D^cu\neq0$. Such configurations, however, are not \emph{local minimizers} of $\widetilde{\Phi}$: in particular, $\widetilde{\Phi}$ and \eqref{def:energy-not-relaxed} have the same (local) minimizers, see \cite[Theorem~7.2]{BraDMGar99}.
\end{remark}  

\begin{remark}\label{rmk:critical-points}
It is instructive to consider a quasistatic evolution for the cohesive {model \eqref{def:energy-not-relaxed}}, corresponding to a time-dependent elongation $a\geq0$, monotone increasing in time: at each time we assume that the deformation $u_a$ of the bar is a critical state satisfying the boundary conditions $u_a(0)=0$, $u_a(L)=a$ (in the weak sense, as specified above). Initially, for small values of $a$, the response of the bar is purely elastic and the evolution follows the elastic critical points $u_a(x)=\frac{a}{L}x$, until the critical stress is reached ($\frac{a}{L}=\frac{\stress}{2}$). At this point, that is for $a=\frac12\stress L$, it is expected that the state of the bar switches to a pre-fractured state, a pre-fracture point appears, and the amplitude of the crack continuously increases from $0$ to $\sfrac$ as {{$a$ increases from $\frac12\stress L$ to $\sfrac$}}. In this case the displacement $u_a$ has constant slope $\frac{\sigma}{2}$, with $\sigma\in(0,\stress)$, and jump {amplitude} $s_0\in(0,\sfrac)$, which should satisfy the compatibility conditions
\begin{equation} \label{eq:rmk-intro}
g'(s_0)=\sigma, \qquad \frac{\sigma}{2}L + s_0 = a.
\end{equation}
{The limit case $a=\sfrac$ corresponds to the complete fracture state, characterized by jump amplitude exactly equal to the boundary datum $s_0=\sfrac=a$ and vanishing stress $\sigma=0$.}

{Concerning the compatibility conditions above, we remark that the existence of a solution $(\sigma,s_0)\in(0,\stress)\times(0,\sfrac)$ to \eqref{eq:rmk-intro} for values $a>\frac12 \stress L$, with the property that $s_0\to 0^+$ and $\sigma\to\stress^-$ as $a\to(\frac12 \stress L)^+$, is not a priori guaranteed.} In case of nonexistence, then, once the critical stress is reached, the evolution {creates} instantaneously a jump of strictly positive amplitude, {possibly even} brittle without cohesive effects (see \cite[Section~9]{C} for an explicit example in two dimensions).

The behaviour of the function $g$ at the origin determines whether for elongations $a\to(\frac12\stress L)^+$ there are solutions of \eqref{eq:rmk-intro} such that $s_0\to0^+$. Assuming that $g$ satisfies the expansion
$$
g(s)=\stress s - \tilde{\ell}s^p + o(s^p) \qquad\text{as }s\to0
$$
for some $\tilde{\ell}>0$ and $p>1$, then it can be checked by elementary arguments that existence of solutions as above is guaranteed if $p>2$, whereas it fails if $p<2$. The case $p=2$ is critical, the existence of solutions depends on the length $L$ of the bar and fails for sufficiently large $L$; this is an instance of the well-known size effects in fracture. A suitable choice of our regularized models will produce in the limit a density $g$ with the previous asymptotic expansion, see Proposition~\ref{prop:g4}. We refer to \cite{DelPie13} for a more detailed discussion of the content of this remark.
\end{remark}


\subsection{Phase-field approximation} \label{subsec:approx}
Following \cite{ConFocIur16}, we now introduce a family of functionals $\Feps : L^1(0,L)\times L^1(0,L)\to[0,+\infty]$, depending on a real parameter $\e>0$, which approximate the cohesive energy density \eqref{def:F} in the sense of $\Gamma$-convergence. We let $f$ satisfy the following conditions, that will be assumed throughout the paper unless explicitly stated:
\begin{enumerate}[label = ($f$\arabic*)]
	\addtolength{\itemsep}{6pt}
\item\label{ass:f1} $f\in C^2([0,1); [0,+\infty))$, $f^{-1}(0)=\{0\}$;
\item\label{ass:f2} $\ds\lim_{s\to1^-}(1-s)f(s)=\stress$, with $\stress\in(0,+\infty)$;
\item\label{ass:f3} $\ds\frac{\dd}{\dd s}\bigl[(1-s)f(s)\bigr]>0$ for all $s\in(0,1)$;
\item\label{ass:f4} $\ds\frac{\dd}{\dd s}\biggl[\frac{(1-s)f'(s)}{f(s)}\biggr]<0$ for all $s\in(0,1)$;
\item\label{ass:f5} $\ds\lim_{s\to1^-}\frac{1}{(1-s)^3}\frac{\dd}{\dd s}\bigl[(1-s)f(s)\bigr]=+\infty$;
\item\label{ass:f6} the map $s\mapsto\sqrt{s}f(1-\sqrt{s})$ is convex.
\end{enumerate}
The previous conditions look very technical and a few comments are required: firstly, they are satisfied by a large class of functions, as Remark~\ref{rmk:examplef}  below shows. Secondly, the $\Gamma$-convergence result in \cite{ConFocIur16} holds under weaker assumptions, but the analysis of the model was further improved in \cite{BonConIur21} where conditions \ref{ass:f1}, \ref{ass:f2}, \ref{ass:f3}, \ref{ass:f6} were assumed in order to obtain more detailed properties of the limit functional, that we will recall in Section~\ref{sect:g}. Here, we also include the conditions \ref{ass:f4}--\ref{ass:f5} whose role is crucial for the analysis in Section~\ref{sect:ODE}. We believe that the optimal set of assumptions is \ref{ass:f1}--\ref{ass:f5}, but dropping the convexity condition \ref{ass:f6} would require a much finer analysis that goes beyond the scopes of this paper (see Remark~\ref{rmk:uniqueness}).

\begin{remark} \label{rmk:examplef}
Prototype functions with the properties above are the maps
\begin{equation} \label{eq:examplefq}
f_{(q)}(s)\defeq \frac{\stress\bigl[1-(1-s)^q\bigr]}{1-s}
\qquad\text{and}\qquad
f^{(p)}(s)\defeq \frac{(\stress+p(1-s))s^2}{1-s}
\end{equation}
for $\stress>0$, $q\in(0,2]$, and $p\in(-\stress,2\stress)$. (Notice that for $q\in(2,4)$ the function $f_{(q)}$ satisfies \ref{ass:f1}--\ref{ass:f5} but not \ref{ass:f6}.)
\end{remark}

We next need to truncate the function $f$ in a neighbourhood of $1$ in a smooth way. To this aim we fix points
\begin{equation} \label{def:seps2}
s_\e\in(0,1) \quad\text{such that}\quad s_\e\uparrow 1 \quad\text{and}\quad \sqrt{\e}f(s_\e)\to1
\end{equation}
as $\e\to0$. By \ref{ass:f2} and $\sqrt{\e}f(s_\e)\to1$ we easily deduce that 
\begin{equation} \label{def:seps}
\frac{1-s_\e}{\sqrt{\e}}\to\stress \qquad\text{as } \e\to0.
\end{equation}
We then define the function $f_\e:[0,1]\to\R$ by
\begin{equation} \label{def:feps}
f_\e(s) \defeq 
\begin{cases}
\sqrt{\e}f(s) & \text{if }0\leq s \leq s_\e,\\
\psi_\e(s) & \text{if }s_\e<s\leq 1,
\end{cases}
\end{equation}
where $\psi_\e:[0,1]\to[0,1]$ is any function satisfying the following conditions:
\begin{enumerate}[label = ($\psi$\arabic*)]
	\addtolength{\itemsep}{6pt}
\item\label{ass:psi1} $\psi_\e\in C^2([0,1])$ is monotone nondecreasing, $\psi_\e(1)=1$;
\item\label{ass:psi2} $\psi_\e(s_\e)=\sqrt{\e}f(s_\e)$, $\psi_\e'(s_\e)=\sqrt{\e}f'(s_\e)$;
\item\label{ass:psi3} the map $s\mapsto\frac{\psi_\e'(s)}{1-s}$ is monotone nonincreasing.
\end{enumerate}
 Notice that the condition \ref{ass:psi3} forces $\psi_\e'(1)=0$. The function $f_\e$ in \eqref{def:feps} is of class $C^1(0,1)$, and can be easily extended to a globally $C^1$-function on $\R$ by setting $f_\e(s)=\sqrt{\e}f'(0)s$ for $s<0$, $f_\e(s)=1$ for $s>1$. 

\begin{remark} \label{rmk:examplepsi}
An explicit family of functions with the properties above are the exponentials
\begin{equation*}
\psi_\e(s) \defeq 1 - \alpha_{\e}e^{-\frac{\beta_\e}{1-s}},
\qquad
\alpha_{\e}\defeq \bigl(1-\sqrt{\e}f(s_\e)\bigr)e^{\frac{\beta_\e}{1-s_\e}},
\quad
\beta_\e \defeq \frac{\sqrt{\e}f'(s_\e)(1-s_\e)^2}{1-\sqrt{\e}f(s_\e)}\,.
\end{equation*}
The choice of the coefficients $\alpha_{\e}$ and $\beta_\e$ guarantees that \ref{ass:psi2} holds. Moreover, it can be checked by an elementary computation that also \ref{ass:psi3} holds for $\e$ small. See Figure~\ref{fig:feps} for a numerical plot of the resulting function $f_\eps$.
\begin{figure}[ht]
	\begin{center}
		\includegraphics[width=6.5cm]{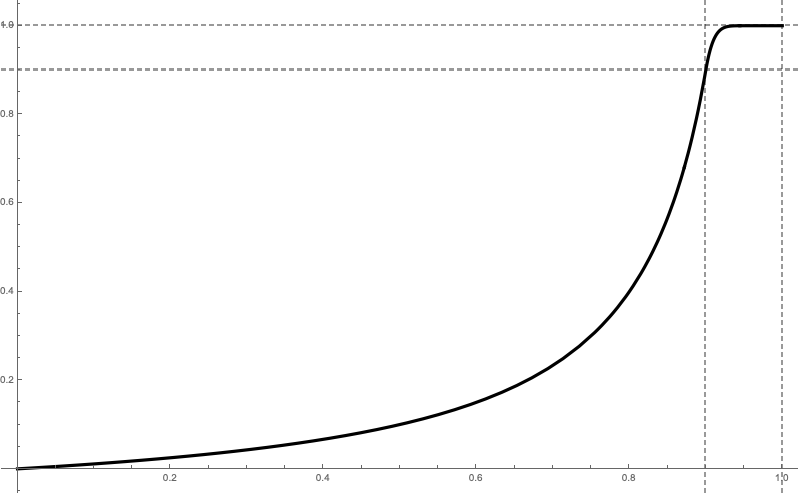}
	\end{center}
	\caption{Numerical plot of the function $f_\eps$ defined in \eqref{def:feps}, obtained by choosing the prototype function $f_{(q)}$ (with $q=1$) as in Remark~\ref{rmk:examplef} and the exponential junction $\psi_\e$ as in Remark~\ref{rmk:examplepsi}.}
	\label{fig:feps}
\end{figure}
\end{remark}

We next introduce a cohesive energy density $g:[0,+\infty)\to[0,+\infty)$, depending on $f$, by solving the following auxiliary optimal profile problem:
\begin{equation} \label{def:g}
g(s) \defeq \inf_{(\alpha,\beta)\in\mathcal{U}_s} \G(\alpha,\beta),
\end{equation}
where 
\begin{equation} \label{def:G}
\G(\alpha,\beta) \defeq \int_{-\infty}^{+\infty} \biggl( f^2(\beta)|\alpha'|^2 + \frac{(1-\beta)^2}{4} + |\beta'|^2 \biggr) \dd t,
\end{equation}
\begin{multline} \label{def:U}
\mathcal{U}_s \defeq \biggl\{ (\alpha,\beta)\in H^1_{\loc}(\R)\times H^1_{\loc}(\R) \,:\, \alpha'\in L^1(\R),\, \bigg|\int_{-\infty}^{+\infty} \alpha'(t)\dd t\bigg| = s, \\
0\leq\beta\leq1, \, \lim_{|t|\to+\infty}\beta(t)=1 \biggr\}
\end{multline}
(where $f^2(\beta)\defeq(f(\beta))^2$ and we adopt the convention $f(\beta)|\alpha'|=0$ if $\beta=1$ and $\alpha'=0$).
The properties of the function $g$ are discussed in Section~\ref{sect:g}. Here we just note that $g$ satisfies the conditions we required in Subsection~\ref{subsec:critpoints} for a cohesive surface energy density, see in particular Proposition~\ref{prop:g1}. A more operational formula for $g$ is provided in Proposition~\ref{prop:g2b}. This may be used to obtain the explicit expression of $g$ in the spirit of Remark~\ref{rmk:sm-explicit}, which treats the prototype case $f(t)=t/(1-t)$, $t\in[0,1)$. The threshold of complete fracture $\sfrac$ (which is possibly infinite) is explicitly determined in terms of $f$, see \eqref{def:sfrac} and Proposition~\ref{prop:sfrac}.

With the previous definitions, we are now ready to introduce the approximating functionals and to state the main $\Gamma$-convergence result from \cite{ConFocIur16}. We let $\Feps: L^1(0,L)\times L^1(0,L)\to[0,+\infty]$ be defined by 
\begin{equation} \label{def:Feps}
\Feps(u,v) \defeq
\begin{cases}
\displaystyle\int_0^L \Bigl( f_\e^2(v) |u'|^2 + \frac{(1-v)^2}{4\e} + \e|v'|^2 \Bigr) \dd x
&
\begin{array}{l}
\text{if } u,v\in H^1(0,L), \\ 
0\leq v\leq 1 \text{ } \Lu\text{-a.e. in }(0,L),
\end{array}\\
+\infty
&\text{ otherwise.}
\end{cases}
\end{equation}

The following $\Gamma$-convergence result is proved in \cite{ConFocIur16}, see in particular Theorem~3.1, Remark~3.2 and Theorem~3.3.1. Although we state the theorem in dimension one, the result in \cite{ConFocIur16} is actually proved in any dimension and under more general assumptions on $f$.

\begin{theorem}[Conti-Focardi-Iurlano] \label{thm:cfi}
	The functionals $\Feps$ defined in \eqref{def:Feps} $\Gamma$-converge as $\eps\to0^+$ in $L^1(0,L)\times L^1(0,L)$ to the functional
	\begin{equation} \label{def:F2}
	\F(u,v)\defeq
	\begin{cases}
	\Phi(u) & \text{if } u\in \BV(0,L),\, v=1 \text{ } \Lu\text{-a.e. in }(0,L), \\
	+\infty &\text{ otherwise,}
	\end{cases}
	\end{equation}
	where $\Phi$ is the cohesive fracture energy given by \eqref{def:F} (with elastic energy density $\phi$ as in \eqref{def:phi} and surface energy density $g$ as in \eqref{def:g}). Moreover, if $(u_\e,v_\e)$ satisfies the uniform bound
	\begin{equation} \label{eq:compactness}
	\sup_\e \Bigl( \Feps(u_\e,v_\e) + \|u_\e\|_{L^1(0,L)} \Bigr) < +\infty,
	\end{equation}
	then there exists a subsequence $(u_{\e_k},v_{\e_k})_{k\in\N}$ and a function $u\in\BV(0,L)$ such that $u_{\e_k}\to u$ almost everywhere in $(0,L)$ and $v_{\e_k}\to1$ in $L^1(0,L)$.
\end{theorem}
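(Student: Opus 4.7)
My plan follows the standard three-part $\Gamma$-convergence strategy. For \emph{compactness}, $\int_0^L (1-v_\e)^2 \dd x \leq 4\e \Feps(u_\e,v_\e)$ immediately gives $v_\e \to 1$ in $L^2$, hence in $L^1$. To control $u_\e$, I would pointwise minimize in $v$ to obtain a density $\phi_\e(\xi) \defeq \inf_v[f_\e^2(v)\xi^2 + (1-v)^2/(4\e)]$ with $\phi_\e(u_\e') \leq f_\e^2(v_\e)|u_\e'|^2 + (1-v_\e)^2/(4\e)$; the rescaling $v=1-\sqrt{\e}t$ together with the asymptotic $\sqrt{\e}f(1-\sqrt{\e}t)\to \stress/t$ from \ref{ass:f2} shows that $\phi_\e(\xi)\geq c\min(|\xi|^2,|\xi|)$ uniformly in $\e$ for some $c>0$. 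Hence $\int_0^L |u_\e'|\dd x\leq C$, so in one dimension the sequence is equibounded in $W^{1,1}$ and weak-$*$ $\BV(0,L)$-compact; a subsequence converges a.e.\ to some $u\in\BV(0,L)$.

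For the \emph{$\Gamma$-$\liminf$ inequality}, given $(u_\e,v_\e)\to(u,1)$ with bounded energy I would extract a weak-$*$ limit $\mu$ of the energy-density measures and analyze $\mu$ via blow-up. At Lebesgue points $x_0$ of $u'$, a Young-measure-type analysis of oscillations of $v_\e$ between the elastic value $1$ and the optimal transition value $1-\sqrt{\e}t^*(\xi)$ yields the relaxed density $\dd\mu/\dd\Lu\geq \phi(u'(x_0))$; the $-\stress^2/4$ correction in the linear branch is encoded by the identity $\phi(\xi)=\min_{\eta+m=\xi,\,|\eta|\leq\stress/2}(\eta^2+\stress|m|)$. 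At a jump $x_0\in J_u^a$ of amplitude $s$, the rescaling $t=(x-x_0)/\e$ exactly matches the $\Feps$-integrand with the $\G$-integrand and produces a pair admissible in $\mathcal{U}_s$, so \eqref{def:g} gives $\mu(\{x_0\})\geq g(s)$. For the Cantor part, approximating $D^c u$ by many small jumps and using $g'(0)=\stress$ from the analysis in Section~\ref{sect:g} gives $\mu^c\geq \stress|D^c u|$. Summing these contributions yields $\liminf \Feps\geq \Phi(u)$.

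For the \emph{recovery sequence}, by density in $\BV$ it suffices to handle piecewise affine functions with finitely many jumps. On an affine piece with $|u'|\leq\stress/2$, take $v_\e\equiv 1$ and $u_\e=u$, realizing $|u'|^2=\phi(u')$. For $|u'|>\stress/2$, decompose $u'=\stress/2+(u'-\stress/2)$ and realize the excess slope by $n$ equally spaced small jumps of amplitude $(u'-\stress/2)L/n$, each desingularized on an $\e$-scale interval by a rescaled optimal $\mathcal{U}_s$-profile for $g\bigl((u'-\stress/2)L/n\bigr)$; diagonalizing $n=n(\e)\to\infty$ with $n\e\to 0$ and using $g(s)/s\to \stress$ produces exactly $\stress|u'|-\stress^2/4=\phi(u')$ per unit length. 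For a prescribed jump of amplitude $s$ at $x_0$, insert in an $\e$-scale neighbourhood of $x_0$ a rescaled almost-optimal pair for $g(s)$, smoothly matched to the bulk profile $v_\e=1$ through the truncation $\psi_\e$. The Cantor part of a general $u\in\BV$ is handled by density and a diagonal argument.

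The main obstacle is the sharp matching between bulk and surface in the linear regime $|u'|>\stress/2$: the excess slope must be converted into an infinitesimal family of cohesive jumps whose linearized cost is exactly $\stress(|u'|-\stress/2)$, and both the $\Gamma$-$\liminf$ blow-up and the $\Gamma$-$\limsup$ diagonal construction must produce the same constant $-\stress^2/4$. This requires $g(s)/s\to \stress$ as $s\to 0^+$ with a precise rate, together with stability of the optimal $\mathcal{U}_s$-profile under rescaling, both of which rely on the full set of assumptions \ref{ass:f1}--\ref{ass:f6} through the analysis of the profile problem \eqref{def:g} carried out in Section~\ref{sect:g}.
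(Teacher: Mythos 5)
The paper does not prove Theorem~\ref{thm:cfi}; it is quoted from \cite{ConFocIur16}, so there is no internal proof to compare against. Your sketch follows the standard cohesive phase-field $\Gamma$-convergence strategy, which is broadly the route taken in \cite{ConFocIur16}, but the compactness step contains a concrete error. The pointwise-minimized density $\phi_\e(\xi)\defeq\inf_v\bigl[f_\e^2(v)\xi^2+(1-v)^2/(4\e)\bigr]$ does \emph{not} satisfy $\phi_\e(\xi)\geq c\min(|\xi|^2,|\xi|)$ uniformly in $\e$: taking $v=0$ gives $\phi_\e(\xi)\leq\frac{1}{4\e}$ for every $\xi$, and in your rescaling $v=1-\sqrt{\e}\,t$ the balancing value $t^*(\xi)\sim(2\stress|\xi|)^{1/2}$ must remain below $\e^{-1/2}$ to keep $v\geq0$, which fails once $|\xi|\gtrsim\e^{-1}$. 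In fact $\phi_\e(\xi)$ grows roughly like $\min(\xi^2,\stress|\xi|,\tfrac{1}{4\e})$ and saturates, so $u_\e$ is in general \emph{not} bounded in $W^{1,1}$: a ramp of height $k_\e\to\infty$ and width $\delta_\e\to0$ with $k_\e\delta_\e\to0$, accompanied by $v_\e\to0$ on its support, has equibounded energy and $\|u_\e\|_{L^1}\to0$ yet $|Du_\e|\to\infty$. The compactness conclusion (a.e.\ subsequential convergence to a $\BV$ limit) still holds, but establishing it requires a truncation argument — bounding $|D(u_\e\wedge M)|(0,L)$ uniformly for each fixed level $M$, passing to the limit, and then letting $M\to\infty$ — rather than a direct total-variation bound on $u_\e$ itself.

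The $\Gamma$-$\liminf$ and $\Gamma$-$\limsup$ portions of your plan are sensible at this level of generality, and you correctly identify the crux: producing the relaxed bulk density $\stress|\xi|-\stress^2/4$ consistently from the blow-up (lower bound) and the laminate of diverging-in-number, shrinking jumps (upper bound), via the splitting $\phi(\xi)=\min_{\eta+m=\xi,\,|\eta|\leq\stress/2}(\eta^2+\stress|m|)$ that goes back to \cite{BouBraBut95}. One small caution on the Cantor part: the lower bound $\dd\mu^c\geq\stress\,\dd|D^cu|$ is obtained by blow-up at points of approximate continuity of the Cantor measure using $g'(0)=\stress$, not by replacing $D^cu$ with small jumps (that substitution is a limsup-type device).
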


\begin{remark} \label{rmk:assf}
We collect here for later use a few properties that follow immediately from the assumptions \ref{ass:f1}-\ref{ass:f5}. Firstly, we observe that the function $f$ is strictly increasing with $f'(s)>0$ for $s\in(0,1)$, by \ref{ass:f3}. By de l'Hôpital's theorem, using \ref{ass:f2} and \ref{ass:f4}, one also has that
\begin{equation} \label{eq:f6}
\lim_{s\to1^-}(1-s)^2f'(s)=\stress\,.
\end{equation}
Finally, by the monotonicity property \ref{ass:f4} one has that the function $\frac{f'(s)}{f(s)}$ has a limit as $s\to0^+$, and since $f(0)=0$ it is easily seen that it must be
\begin{equation} \label{eq:f7}
\lim_{s\to0^+}\frac{f'(s)}{f(s)}=+\infty.
\end{equation}
\end{remark}


\subsection{Main results} \label{subsec:results}
We consider a family of \textit{critical points} $(u_\e,v_\e)$ of the approximating functionals \eqref{def:Feps}, \textit{i.e.}\ $u_\e,v_\e\in H^1(0,L)$ satisfy the Euler-Lagrange equations (in the weak sense)
\begin{subnumcases}{}\smallskip
-\e v_\e'' + f_\e(v_\e)f_\e'(v_\e)(u_\e')^2 + \ts\frac{v_\e-1}{4\e}=0 & \text{in }(0,L), \label{def:cp1}\\ \smallskip
\bigl(f_\e^2(v_\e)u_\e'\bigr)'=0 & \text{in }(0,L), \label{def:cp2}\\ \smallskip
u_\e(0)=0, u_\e(L)=a_\e, & \label{def:cp3} \\ \smallskip
v_\e(0)=v_\e(L)=1, & \label{def:cp4}
\end{subnumcases}
where for the Dirichlet boundary condition for $u_\e$ we also require that
\begin{equation} \label{def:bc}
a_\e \to a>0 \qquad\text{as }\e\to0.
\end{equation}
In our first main result we show that any such a family of critical points with equibounded energies is precompact and that any limit point is necessarily a critical point of the cohesive energy \eqref{def:F}. Moreover the $\Gamma$-convergence acts as a selection criterion, since the limit critical point has at most one crack point, located at the midpoint of the bar.

\begin{theorem} \label{thm:main1}
Assume that $(u_\e,v_\e)$ are critical points of the functionals $\Feps$ satisfying \eqref{def:bc} and
\begin{equation} \label{eq:bound-energy}
\sup_{\e}\Feps(u_\eps,v_\eps)<+\infty.
\end{equation}
Then there exists a subsequence $\e_k\to0$ such that $(u_{\e_k},v_{\e_k})\to(u,1)$ in $(L^1(0,L))^2$, for some $u\in\SBV(0,L)$ such that $|Du|(0,L)=a$. Moreover, letting $\ds m_0 \defeq \lim_{k\to\infty}\min_{[0,L]}v_{\e_k}$, we have that:
\begin{enumerate}
\item\label{thm:main-case1} If $m_0=1$, then $u(x)=\frac{a}{L}x$.
\item\label{thm:main-case2} If $m_0\in(0,1)$, then $u(x)=c_0x + (a-c_0L)\chi_{(\frac{L}{2},L)}(x)$ with $c_0=\frac12f(m_0)(1-m_0)\in(0,\frac{\stress}{2})$, $a-c_0L=[u](\midp)\in(0,\sfrac)$, and $g'([u](\midp))=2c_0$.
\item\label{thm:main-case3} If $m_0=0$, then $\sfrac\in\R$ and $u(x)=a\chi_{(\frac{L}{2},L)}(x)$ with $a=[u](\midp)=\sfrac$.
\end{enumerate}
Finally if $m_0<1$, or if $m_0=1$ and $\frac{a}{L}\leq\frac{\stress}{2}$, the convergence of the energies holds:
\begin{equation} \label{eq:conv-energy}
\Feps(u_\e,v_\e) \to \F (u,1) \qquad\text{as }\e\to0.
\end{equation}
If instead $m_0=1$ and $\frac{a}{L}>\frac{\stress}{2}$, the convergence of the energies \eqref{eq:conv-energy} does not hold.
\end{theorem}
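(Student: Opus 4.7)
\emph{Step 1 (compactness and rigidity of $v_\eps$).} The plan is to treat the Euler--Lagrange system \eqref{def:cp1}--\eqref{def:cp4} as a planar autonomous ODE for $v_\eps$, derive a rigid symmetric single-well structure, and then identify all limiting configurations by a blow-up at scale $\eps$ centred at the minimum of $v_\eps$. First I apply Theorem~\ref{thm:cfi} to extract a subsequence along which $u_\e\to u$ a.e.\ and $v_\e\to 1$ in $L^1(0,L)$ with $u\in\BV(0,L)$; the boundary data \eqref{def:cp3}--\eqref{def:bc} and $L^1$-lower semicontinuity of the total variation give $|Du|(0,L)=a$. Equation \eqref{def:cp2} shows that $\sigma_\e\defeq f_\e^2(v_\e)u_\e'$ is a real constant, and substituting into \eqref{def:cp1} yields the autonomous ODE
\[
\eps v_\eps'' = \frac{\sigma_\eps^2\, f_\eps'(v_\eps)}{f_\eps^3(v_\eps)} + \frac{v_\eps-1}{4\eps}, \qquad K_\eps \defeq \tfrac{\eps}{2}(v_\eps')^2 + \tfrac{\sigma_\eps^2}{2f_\eps^2(v_\eps)} - \tfrac{(1-v_\eps)^2}{8\eps}\ \text{constant}.
\]
The asymptotics \ref{ass:f2} and \eqref{eq:f6} linearise this ODE at $v=1$ into $v''\sim (v-1)/(4\eps^2)$, so $(v,v')=(1,0)$ is a hyperbolic saddle with eigenvalues $\pm 1/(2\eps)$. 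Consequently any non-trivial $H^1$-solution with $v\leq 1$ and $v(0)=v(L)=1$ is strictly less than $1$ in $(0,L)$: the weak form of \eqref{def:cp1} forbids a jump in $v_\eps'$, and the first integral forces $|v_\eps'(L_1)|=|v_\eps'(0)|$ at any interior return to $1$, so the orbit would have to cross into $\{v>1\}$ unless $L_1=L$. Time-reversibility of the autonomous ODE then implies that $v_\eps$ is symmetric about $L/2$ and attains a unique minimum $m_\eps\defeq v_\eps(L/2)$.

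\emph{Step 2 (blow-up and identification of the stress).} From \eqref{eq:bound-energy} we get $\sigma_\eps a_\eps = \sigma_\eps^2\int_0^L f_\eps^{-2}(v_\eps)\,\dd x\leq C$, hence $\sigma_\eps$ is bounded and a further subsequence gives $\sigma_\eps\to c_0\geq 0$ and $m_\eps\to m_0\in[0,1]$. If $v_\eps\equiv 1$ then $u_\eps(x)=a_\eps x/L\to ax/L$, which is case~\ref{thm:main-case1}. Otherwise, setting $w_\eps(t)\defeq v_\eps(\midp+\eps t)$, the definition \eqref{def:feps} of $f_\eps$ on the region $\{v_\eps\leq s_\eps\}$ converts the ODE into the $\eps$-independent equation
\[
w_\eps'' = \frac{\sigma_\eps^2\, f'(w_\eps)}{f^3(w_\eps)} + \frac{w_\eps-1}{4}, \qquad w_\eps(0)=m_\eps,\ w_\eps'(0)=0.
\]
Evaluating its first integral at $t=0$ and as $|t|\to\infty$ (where $w_\eps\to 1$ and, thanks to \ref{ass:f2}, both $\sigma_\eps^2/f^2(w_\eps)$ and $(1-w_\eps)^2$ vanish in the limit) yields the compatibility identity $c_0=\tfrac12 f(m_0)(1-m_0)$, giving the value of $c_0$ stated in cases~\ref{thm:main-case2}, \ref{thm:main-case3}.

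\emph{Step 3 (limit displacement and jump).} Away from $\midp$, $v_\eps\to 1$ and $f_\eps(v_\eps)\to 1$, so $u_\eps'=\sigma_\eps/f_\eps^2(v_\eps)\to c_0$ pointwise on $(0,L)\setminus\{\midp\}$; the limit $u$ is therefore piecewise affine with slope $c_0$ and at most one jump at $\midp$, and integrating $u_\eps'$ across $(0,L)$ gives $[u](\midp)=a-c_0 L$. The blow-up profile $w_0$ of Step~2 is a critical point of the cell functional $\G$ in \eqref{def:G}--\eqref{def:U} for amplitude $s=[u](\midp)$ with Lagrange multiplier $\lambda=2c_0$ (the multiplier associated with the constraint on $\int\alpha'$); invoking the characterisation of $g$ from Section~\ref{sect:g} and the envelope theorem then yields $g'([u](\midp))=2c_0$, closing case~\ref{thm:main-case2}. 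When $m_0=0$, one has $c_0=0$, $[u](\midp)=a$, and the limit profile concentrates an amplitude exactly equal to $\sfrac$; this is possible only if $\sfrac<+\infty$ and $a=\sfrac$, giving case~\ref{thm:main-case3}.

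\emph{Step 4 (energy convergence and main obstacle).} The $\Gamma$-liminf part of Theorem~\ref{thm:cfi} gives $\F(u,1)\leq\liminf\Feps(u_\eps,v_\eps)$. For the matching upper bound I exploit the first-integral identity to rewrite the phase-field integrand as $(1-v_\eps)^2/(2\eps)+2K_\eps$, splitting the integration into the outer region (which contributes $\phi(c_0)L$ in the limit since $\sigma_\eps^2/f_\eps^2(v_\eps)\to c_0^2$ and the slope bound $c_0\leq\stress/2$ follows from $(1-m_0)f(m_0)\leq\stress$) and the inner well (where the blow-up realises the infimum in \eqref{def:g}, contributing $g([u](\midp))$). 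This proves \eqref{eq:conv-energy} in cases~\ref{thm:main-case2}, \ref{thm:main-case3}, and also in case~\ref{thm:main-case1} when $a/L\leq\stress/2$ by the direct computation $\Feps(u_\eps,1)=a_\eps^2/L\to a^2/L=\phi(a/L)L$. When $m_0=1$ but $a/L>\stress/2$ the same computation gives the limit $a^2/L>\stress a-\stress^2L/4=\F(u,1)$ (since $\phi$ is the relaxation of $\xi\mapsto\xi^2$ above $\stress/2$), which is precisely the failure of \eqref{eq:conv-energy}. The main obstacle in this programme is the rigidity argument in Step~1: excluding multi-well patterns and controlling the behaviour at the saddle $v=1$ requires the full strength of the assumptions \ref{ass:f4}--\ref{ass:f5} together with a careful interplay between the phase portrait of the autonomous ODE and the weak formulation of \eqref{def:cp1}; once this is in place, Steps~2--4 proceed via the autonomous rescaled ODE and the characterisation of $g$ through \eqref{def:g}--\eqref{def:U}.
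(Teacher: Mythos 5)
Your overall architecture lines up with the paper's: extract the constant $c_\e = f_\e^2(v_\e)u_\e'$, establish the first integral and single-well symmetry of $v_\e$, blow up at scale $\e$ around the minimum, and split into the three regimes $m_0\in(0,1)$, $m_0=0$, $m_0=1$. Steps~1 and~2 are essentially correct. However, your Steps~3 and~4 contain a genuine gap: they silently presuppose the very identification that is the crux of the theorem.

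In Step~3 you declare that the blow-up profile is ``a critical point of the cell functional $\G$ for amplitude $s=[u](\midp)$'' and then invoke the envelope theorem to get $g'([u](\midp))=2c_0$. But what you actually obtain from the blow-up is that $w_0$ satisfies the same autonomous ODE as the optimal profile $\beta_{s_0}$, with $s_0$ uniquely determined by $m_0=\mbar_{s_0}$ (this uses \ref{ass:f6} via Proposition~\ref{prop:g2b}). It is not automatic that $s_0=[u](\midp)$: a priori the inner layer could carry a jump amplitude different from the one parametrising the optimal profile sharing its minimum value, because $[u](\midp)$ is computed from $\int u_\e'$ over an $o(1)$-but-$\gg\e$ window, not over the fixed blow-up window. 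The paper's argument closes this gap by using the explicit integral representation $s(m)$ of Proposition~\ref{prop:g2b}, together with the first integral and a dominated-convergence argument controlled by \ref{ass:f3} and \ref{ass:f5}, to prove both $s_0\leq[u](\midp)$ and $[u](\midp)\leq s_0$ (see the chain of estimates leading to \eqref{[u]<s0}). Without this, the envelope theorem cannot be applied, since you would be computing $g'$ at a wrong point.

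The same gap reappears in the other two regimes, which your proposal treats even more briefly. When $m_0=0$, you assert ``the limit profile concentrates an amplitude exactly equal to $\sfrac$''; but the blow-up profile there is the $f$-independent profile $1-e^{-|t|/2}$ and carries no information about the jump amplitude, so the identity $[u](\midp)=\sfrac=\pi/f'(0)$ must be extracted by direct estimation of $\int c_\e/f_\e^2(v_\e)$ using the first integral and the explicit behaviour of $(1-s)f(s)$ near $0$, as in the paper's Proposition~\ref{prop:case1a}. When $m_0=1$ but $v_\e\not\equiv1$, you never argue that the potential jump at $\midp$ vanishes; the paper needs a full case analysis (whether $m_\e\geq s_\e$ or not, and whether $f_\e(m_\e)\to0$), again resting on \ref{ass:f5}, to show $[u](\midp)=0$. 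Since your energy-convergence Step~4 feeds the value of $[u](\midp)$ into $g$, the gap propagates there as well. In short: the skeleton is right, but the hard quantitative work — turning the ODE first integral into the integral formulas \eqref{eq:ms-3}--\eqref{eq:ms-2} and using them to pin down the jump amplitude in each of the three regimes — is missing, and it is not a routine omission: it is the part of the argument the paper itself singles out as ``the most delicate point.''
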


Our second result is an existence counterpart of Theorem~\ref{thm:main1}: we show that, for any critical point $\bar{u}$ of the cohesive energy \eqref{def:F} that might appear as limit of critical points of the functionals $\Feps$ (that is, those that can be obtained as limits in Theorem~\ref{thm:main1}), we can actually \emph{construct} a family $(u_\e,v_\e)$ of critical points of $\Feps$, with equibounded energy, approximating $\bar{u}$ as $\e\to0$.

\begin{theorem} \label{thm:main2}
Let $\bar{u}\in\SBV(0,L)$ be either:
\begin{enumerate}
\item $\bar{u}(x)=\frac{a}{L}x$ for some $a>0$, or
\item $\bar{u}(x)=c_0x + (a-c_0L)\chi_{(\frac{L}{2},L)}(x)$ with $c_0\in(0,\frac{\stress}{2})$ and $g'(a-c_0L)=2c_0$, or
\item $\bar{u}(x)=a\chi_{(\frac{L}{2},L)}(x)$ with $a=\sfrac$ (if $\sfrac$ is finite).
\end{enumerate}
Then for every $\e>0$ sufficiently small there exists a critical point $(u_\e,v_\e)$ solution to \eqref{def:cp1}--\eqref{def:cp4} with $a_\e\to a$ and uniformly bounded energy, such that $u_\e\to\bar{u}$ in $L^1(0,L)$ as $\e\to0$.
\end{theorem}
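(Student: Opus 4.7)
For case (i) the construction is immediate: setting $v_\e \equiv 1$ makes the nonlinear term $f_\e(v_\e) f_\e'(v_\e)(u_\e')^2$ in \eqref{def:cp1} vanish, since $f_\e'(1) = \psi_\e'(1) = 0$ by \ref{ass:psi3}, so the pair $(u_\e, v_\e) = (\bar u, 1)$ with $a_\e \equiv a$ already solves \eqref{def:cp1}--\eqref{def:cp4} with uniformly bounded energy $a^2/L$ and trivial convergence $u_\e \to \bar u$.

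For cases (ii) and (iii), I would construct symmetric critical points on $[0, L]$ by a two-parameter shooting argument. Equation \eqref{def:cp2} integrates to $f_\e^2(v_\e) u_\e' \equiv c_\e$ for some constant $c_\e \geq 0$, so $u_\e' = c_\e/f_\e^2(v_\e)$; substituting into \eqref{def:cp1}, multiplying by $v_\e'$ and integrating yields the first integral
\[
\e(v_\e')^2 = \frac{(1-v_\e)^2}{4\e} - \frac{c_\e^2}{f_\e^2(v_\e)} - 2K_\e.
\]
I then look for $v_\e$ even about $L/2$, strictly decreasing on $[0, L/2]$ from $v_\e(0) = 1$ to $v_\e(L/2) = m_\e \in [0, 1)$ with $v_\e'(L/2) = 0$; this last condition fixes $K_\e = (1-m_\e)^2/(8\e) - c_\e^2/(2f_\e^2(m_\e))$ in terms of $(c_\e, m_\e)$, determines $v_\e$ implicitly by separation of variables, and recovers $u_\e$ from the constitutive formula with $u_\e(0) = 0$; reflection gives $v_\e(L) = 1$ automatically. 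The parameters are then pinned down by the length and elongation constraints
\[
\frac{L}{2} = \sqrt\e\int_{m_\e}^{1}\frac{dv}{\sqrt{Q_\e(v)}}, \qquad a_\e = 2\sqrt\e\, c_\e\int_{m_\e}^{1}\frac{dv}{f_\e^2(v)\sqrt{Q_\e(v)}},
\]
where $Q_\e$ denotes the right-hand side of the first integral. Under the rescaling $t = (x - L/2)/\sqrt\e$, $\beta_\e(t) = v_\e(L/2 + \sqrt\e\, t)$, these relations reduce at leading order to the Euler-Lagrange equation and the defining conditions of the optimal profile realizing $g(s_0)$ in \eqref{def:g}--\eqref{def:U}, with $s_0 = a - c_0 L$ in case (ii) and $s_0 = \sfrac$ in case (iii). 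A continuity or degree argument applied to the matching map, exploiting the monotonicity of the two integrals in $(c_\e, m_\e)$ and the ODE analysis of Section~\ref{sect:ODE}, then produces admissible $(c_\e, m_\e)$ for all small $\e$ with $c_\e \to c_0$ and $m_\e \to m_0$; the uniform energy bound and the $L^1$-convergence $u_\e \to \bar u$ follow by reinserting the first integral into $\Feps$ and comparing with the limit energy $L c_0^2 + g(s_0)$.

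The hardest part is the rigorous existence and limit identification of the matching parameters $(c_\e, m_\e)$. The integrand defining $Q_\e$ has an integrable square-root singularity at $v = m_\e$ and must be controlled across the truncation threshold $s_\e$ where the definition of $f_\e$ switches, so the analysis has to patch an ``inner'' Ambrosio-Tortorelli layer of width $\sqrt\e$ around $L/2$ with an ``outer'' region where $v_\e$ is $O(\sqrt\e)$-close to $1$. Case (iii) is the most delicate because $c_0 = 0$ and $m_0 = 0$ simultaneously, pushing the shooting to the degenerate edge of the admissible parameter region: here the hypothesis $\sfrac < \infty$ together with the asymptotic behaviour \eqref{eq:f7} of $f$ near the origin is essential to keep the limit profile in $\mathcal{U}_{\sfrac}$ and to ensure that the matching map remains nondegenerate as $\e \to 0$.
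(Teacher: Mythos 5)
The construction for case (i) is exactly the paper's: with $v_\e\equiv1$ the nonlinear term drops because $f_\e'(1)=\psi_\e'(1)=0$, and $(u_\e,v_\e)=(\bar u,1)$ is trivially a critical point.

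For cases (ii) and (iii) you propose a two-parameter shooting on $(c_\e,m_\e)$, matching simultaneously the half-length $L/2$ and the elongation $a_\e$, but you leave the key existence step (``a continuity or degree argument applied to the matching map'') unproved, and indeed this is where your scheme gets stuck: the two integral constraints are coupled, their monotonicity in $(c_\e,m_\e)$ is not jointly obvious, and a degree/continuity argument in two dimensions near the degenerate corner $(c_\e,m_\e)\to(0,0)$ is exactly the kind of thing that needs a careful proof rather than an appeal. The paper sidesteps the entire difficulty with a one-parameter shooting: \emph{fix} $c_\e$ from the outset (take $c_\e\equiv c_0$ in case (ii), or $c_\e\to0$ arbitrary in case (iii)) and shoot only on $m_\e$, using Lemma~\ref{lemma:construction-ODE} to find an initial value $m_\e$ at the midpoint for which the solution to the ODE \eqref{ODE:construction} hits $1$ exactly at $x=0$ and $x=L$. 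The elongation $a_\e=\int_0^L c_\e/f_\e^2(v_\e)\,\dd x$ is then \emph{not prescribed}; after proving the uniform energy bound by direct estimates on the first integral, the convergence $a_\e\to a$ and $u_\e\to\bar u$ are recovered by \emph{applying Theorem~\ref{thm:main1} to the constructed sequence} together with the strict monotonicity of $g'$ on $(0,\sfrac)$ (Proposition~\ref{prop:g3}), which pins down the jump amplitude and hence $a$. This bootstrap is what makes the paper's argument close; your version would require re-deriving an analogous uniqueness statement for the matching map from scratch.

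A smaller but genuine error: your blow-up scale is wrong. You claim that under $t=(x-L/2)/\sqrt\e$ the system reduces to the optimal-profile equation for $g$, but in the inner region $\{v_\e<s_\e\}$ one has $f_\e=\sqrt\e f$, so the ODE reads $\e^2 v_\e''=\tfrac{c_\e^2 f'(v_\e)}{f^3(v_\e)}+\tfrac{v_\e-1}{4}$, and it is the rescaling $\tilde v_\e(t)=v_\e(L/2+\e t)$ (scale $\e$, not $\sqrt\e$) that turns this into the optimal-profile equation \eqref{eq:opt-prof-1}--\eqref{eq:opt-prof-2} with $\sigma_s$ replaced by $c_\e$ (compare \eqref{eq:blowup}). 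With the $\sqrt\e$ rescaling one gets an extra factor $\e$ on the right-hand side and the limit does not identify the profile. This matters for your argument because the identification of the inner layer with the optimal pair for $g(s_0)$ is the mechanism by which the limit jump amplitude and the convergence of energies are supposed to be extracted.
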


Let us discuss briefly our strategy of proof. Compared with the brittle case \cite{FraLeSer09,Le10,BabMilRoda,BabMilRodb}, the main difficulty in our problem is that the behaviours of $u_\e$ and $v_\e$ are deeply related, meaning that their transitions happen in the same regions with the same scale. With this idea in mind, we start the proof of Theorem~\ref{thm:main1} and the study of system \eqref{def:cp1}-\eqref{def:cp4} by computing $u'_\e$ from \eqref{def:cp2} and inserting it into \eqref{def:cp1}, so obtaining a second order ODE for $v_\e$ (equation \eqref{eq:cp1}). Analysis of the ODE \eqref{eq:cp1} (performed in Section~\ref{sect:ODE}) shows that $v_\e$ is a symmetric well with minimum $m_\e=v_\e(\frac{L}{2})\in(0,1]$ and that the interval $\{f_\e(v_\e)=\eps^{1/2}f(v_\e)\}$ shrinks to $0$ as $\e\to0$ (Section~\ref{sect:proof-prelim}). Also, by definition of such interval, we find $v_\e\to1$ uniformly on compact sets not containing $\frac{L}{2}$. Now, we argue differently depending on the value of $m_0:=\lim_{\e\to0}m_\e$, that is, on the fact we are in the elastic ($m_0=1$), pre-fractured ($m_0\in(0,1)$) or fractured ($m_0=0$) regime. The richest regime is the pre-fractured one, addressed in Section~\ref{subsec:cohesive}.
Defining $c_\e:=f^2_\e(v_\e)u'_\e$ and $c_0:=\lim_{\e\to0}c_\e$, we get $u'_\e\to c_0$ uniformly on compact sets not containing $\frac{L}{2}$. This in particular implies $u\in\mathrm{SBV}(0,L)$, $J_u\subset\{\frac{L}{2}\}$, $u'=c_0$ a.e. We consider the blow-up of $u_\e$ and $v_\e$ around the minimum point $\frac{L}{2}$, setting $\tilde u_\e(t)=u_\e(\frac{L}{2}+\e t)$ and $\tilde v_\e(t)=v_\e(\frac{L}{2}+\e t)$. Passing to the limit, we get $(\tilde u_\e,\tilde v_\e)\to(\alpha_{\bar s},\beta _{\bar s})$ optimal profile for $g(\bar s)$, for some $\bar s\geq0$. The most delicate point is to prove that $[u](\frac{L}{2})=\bar s$. This is obtained by studying the bijective continuous map $s\in(0,\sfrac)\mapsto m_s\in(0,1)$, where $m_s$ is the minimum of $\beta_s$ with $(\alpha_s,\beta_s)$ optimal profile for $g(s)$. Its inverse can be written in integral form, see \eqref{eq:ms-3} in Proposition~\ref{prop:g2b}. Proofs in the elastic and fractured regimes, respectively performed in Sections~\ref{subsec:elastic} and \ref{subsec:fracture}, are not based on blow-up arguments, but rather on energy estimates. However, they also require fine ad hoc computations involving formula \eqref{eq:ms-3}: indeed, in the elastic case we need to show that $[u](\frac{L}{2})=0$ and in the fractured case that $[u](\frac{L}{2})= \sfrac$.

The proof of the second main Theorem~\ref{thm:main2} is addressed in Section~\ref{sect:proof2}. The elastic case is trivial, since we can define $u_\e(x)= \frac{a}{L}x$, $v_\e(x)= 1$ in $(0,L)$. The pre-fractured and fractured cases are again ODE-based. Take $c_\e\in (0,\frac{\sigma_c}{2})$ and set $c_0:=\lim_{\e\to0}c_\e$. Then, we show that for all $\e>0$ we can choose $m_\e>0$ such that the unique solution $v_\e$ to the second order ODE \eqref{eq:cp1} with initial conditions $v_\e(\frac{L}{2})=m_\e$, $v'_\e(\frac{L}{2})=0$, satisfies in addition $v_\e(0)=v_\e(L)=1$. This strongly uses the analysis of the ODE in Section~\ref{sect:ODE} and a continuous dependence argument on the initial value. Finally, $u_\e$ is easily computed from \eqref{def:cp2}.

\bigskip\noindent
\textbf{Structure of the paper.}
Section~\ref{sect:ODE} contains a detailed analysis of the ODE that is solved by a critical profile $v_\e$. In Section~\ref{sect:g} we discuss the properties of the cohesive energy density $g$ appearing in the $\Gamma$-limit of the functionals $\Feps$. In Sections~\ref{sect:proof-prelim} and \ref{sect:proof1} we give the proof of Theorem~\ref{thm:main1}, whereas Section~\ref{sect:proof2} contains the proof of Theorem~\ref{thm:main2}.


\section{Analysis of the ODE satisfied by critical points} \label{sect:ODE}

In this section we discuss the properties of the solution to the Cauchy problem
\begin{subnumcases}{}\smallskip
	y''=\frac{1-y}{4}\biggl[\frac{(2\alpha)^2f'(y)}{(1-y)f^3(y)}-1\biggr] \label{eq:ODE1}\\ \smallskip
	y(0)=m \label{eq:ODE2}\\ \smallskip
	y'(0)=0 \label{eq:ODE3}
\end{subnumcases}
for fixed parameters $\alpha\in(0,\frac{\stress}{2})$ and $m\in(0,1)$. As it will be clear in the following of the paper, this equation is satisfied by an optimal profile $\beta_s$ for the minimum problem defining $g$ (see \eqref{def:g} and Section \ref{sect:g}) and also corresponds to the blow-up of the critical points equation \eqref{def:cp1} around a stationary point of $v_\e$.

In the whole of this section we always assume that the function $f$ appearing in \eqref{eq:ODE1} satisfies the conditions \ref{ass:f1}--\ref{ass:f5}; in particular, the analysis of the Cauchy problem does not make use of \ref{ass:f6}.

It is convenient to introduce the functions
\begin{equation} \label{def:fbar}
	\fbar(s)\defeq \frac{f'(s)}{(1-s)f^3(s)}
\end{equation}
and
\begin{equation} \label{def:h}
	h(s)\defeq\frac{1-s}{4}\biggl[\frac{(2\alpha)^2f'(s)}{(1-s)f^3(s)}-1\biggr] = \frac{1-s}{4}\Bigl[(2\alpha)^2\fbar(s)-1\Bigr]\qquad \text{for }s\in(0,1),
\end{equation}
extended by continuity by setting $h(1)=0$. With this notation, the equation \eqref{eq:ODE1} takes the form $y''=h(y)$. The constant function $\bar{y}\equiv1$ is always a stationary solution of the equation \eqref{eq:ODE1}; in the next proposition we show in particular that the function $h$ has a unique zero $z_\alpha\in(0,1)$, which corresponds to a second stationary solution $\bar{y}\equiv z_\alpha$.

\begin{proposition}[Properties of $\fbar$] \label{prop:salpha}
	The function $\fbar$ defined in \eqref{def:fbar} is of class $C^1(0,1)$ and strictly decreasing, with $\fbar'(s)<0$ for all $s\in(0,1)$. Moreover
	\begin{equation} \label{eq:limits-fbar}
		\lim_{s\to0^+}\fbar(s)=+\infty, \quad
		\lim_{s\to1^-}\fbar(s)=\frac{1}{\stress^2}, \quad
		\lim_{s\to0^+}\frac{|\fbar'(s)|}{\fbar(s)}=+\infty, \quad
		\lim_{s\to1^-}\frac{|\fbar'(s)|}{(1-s)^3}=+\infty.
	\end{equation}
	In particular, it follows that for every $\alpha\in(0,\frac{\stress}{2})$ there exists a unique $z_\alpha\in(0,1)$ such that
	\begin{equation} \label{eq:salpha}
		\fbar(z_\alpha)=\frac{1}{(2\alpha)^2}\,,
	\end{equation}
	and moreover $(1-z_\alpha)f(z_\alpha)>2\alpha$.
\end{proposition}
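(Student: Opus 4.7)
The plan is to write $\bar{f}$ in a product form that makes both the monotonicity and the limits transparent. Introduce the two auxiliary functions
\[
G(s)\defeq(1-s)f(s), \qquad F(s)\defeq\frac{(1-s)f'(s)}{f(s)},
\]
so that $\bar{f}(s)=F(s)/G(s)^2$. The $C^1$ regularity is immediate from \ref{ass:f1} and the fact that $f>0$ on $(0,1)$. By \ref{ass:f3} one has $G'(s)=(1-s)f'(s)-f(s)>0$ on $(0,1)$, in particular $f'(0)=G'(0)>0$ and $f'>0$ on $(0,1)$, so $F,G,G'>0$. Then differentiating gives
\[
\bar{f}'(s)=\frac{F'(s)G(s)-2F(s)G'(s)}{G(s)^3};
\]
assumption \ref{ass:f4} is exactly $F'<0$ pointwise, so both summands in the numerator are strictly negative and $\bar{f}'(s)<0$ on $(0,1)$.

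Next I would verify the four asymptotic relations. As $s\to 0^+$, $G(s)\to 0$ and $G'(s)\to f'(0)>0$, so $G'/G\to+\infty$; since \eqref{eq:f7} forces $F(s)\to+\infty$, it follows that $\bar{f}=F/G^2\to+\infty$. Moreover, dividing the expression for $\bar{f}'$ by $\bar{f}=F/G^2$ yields
\[
\frac{|\bar{f}'(s)|}{\bar{f}(s)}=\frac{|F'(s)|}{F(s)}+\frac{2G'(s)}{G(s)}\,,
\]
and the second term blows up at $0^+$. As $s\to 1^-$, \ref{ass:f2} gives $G(s)\to\stress$ and $f(s)\sim\stress/(1-s)$, while \eqref{eq:f6} (consequence of \ref{ass:f4}) gives $(1-s)^2 f'(s)\to\stress$; combining, $F(s)\to 1$, hence $\bar{f}(s)\to 1/\stress^2$. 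For the rate near $1^-$, use
\[
\frac{|\bar{f}'(s)|}{(1-s)^3}=\frac{|F'(s)|}{G(s)^2(1-s)^3}+\frac{2F(s)G'(s)}{G(s)^3(1-s)^3}\geq\frac{2F(s)G'(s)}{G(s)^3(1-s)^3},
\]
and observe that assumption \ref{ass:f5} is precisely $G'(s)/(1-s)^3\to+\infty$, while $F(s)\to 1$ and $G(s)\to\stress$; this forces the right-hand side to diverge. This last limit is the one step where \ref{ass:f5} is indispensable and is where I would anticipate the most care.

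Finally, since $\bar{f}\colon(0,1)\to(1/\stress^2,+\infty)$ is a continuous strictly decreasing bijection and $2\alpha\in(0,\stress)$ gives $1/(2\alpha)^2\in(1/\stress^2,+\infty)$, the existence of a unique $z_\alpha$ with $\bar{f}(z_\alpha)=1/(2\alpha)^2$ follows. To obtain the inequality $(1-z_\alpha)f(z_\alpha)=G(z_\alpha)>2\alpha$, I would rewrite \eqref{eq:salpha} as $(1-z_\alpha)f^3(z_\alpha)=(2\alpha)^2 f'(z_\alpha)$, multiply by $(1-z_\alpha)^2$, and use $(1-z_\alpha)^2 f'(z_\alpha)=(1-z_\alpha)G'(z_\alpha)+G(z_\alpha)$ to arrive at the factorisation
\[
G(z_\alpha)\bigl[G(z_\alpha)-2\alpha\bigr]\bigl[G(z_\alpha)+2\alpha\bigr]=(2\alpha)^2(1-z_\alpha)G'(z_\alpha).
\]
The right-hand side is strictly positive and $G(z_\alpha),G(z_\alpha)+2\alpha>0$, so necessarily $G(z_\alpha)>2\alpha$, closing the proof.
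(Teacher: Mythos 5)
Your decomposition $\fbar(s)=F(s)/G(s)^2$ with $G(s)=(1-s)f(s)$ and $F(s)=\frac{(1-s)f'(s)}{f(s)}$ is a genuinely cleaner route than the paper's. It packages \ref{ass:f3} as $G'>0$ and \ref{ass:f4} literally as $F'<0$, so $\fbar'=\frac{F'G-2FG'}{G^3}<0$ is immediate and the logarithmic derivative identity $\frac{|\fbar'|}{\fbar}=\frac{|F'|}{F}+\frac{2G'}{G}$ handles the third limit transparently (the paper instead expands $\fbar'$ and pushes through repeated chains of inequalities). Your treatment of the last inequality $(1-z_\alpha)f(z_\alpha)>2\alpha$ is also different and more elementary than the paper's: you derive the algebraic factorisation $G(z_\alpha)\bigl[G(z_\alpha)-2\alpha\bigr]\bigl[G(z_\alpha)+2\alpha\bigr]=(2\alpha)^2(1-z_\alpha)G'(z_\alpha)$ and read off the sign using only $G'>0$ (\ref{ass:f3}), whereas the paper invokes the monotonicity \ref{ass:f4} once more to compare $F(z_\alpha)$ with its limit $1$.

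There is, however, one small flaw you should fix. You assert that \ref{ass:f3} yields $f'(0)=G'(0)>0$ and then use $G(s)\to0$, $G'(s)\to f'(0)>0$ to deduce $G'/G\to+\infty$ at $0^+$. But \ref{ass:f3} only gives $G'>0$ on the \emph{open} interval, and by continuity $G'(0)\geq0$; in fact $f'(0)=0$ is perfectly admissible (the paper's prototype $f^p(s)=\frac{(\stress+p(1-s))s^2}{1-s}$ in \eqref{eq:examplefq} has $(f^p)'(0)=0$). The conclusion $G'/G\to+\infty$ is nevertheless still correct, but for a different reason: writing $\frac{G'(s)}{G(s)}=\frac{f'(s)}{f(s)}-\frac{1}{1-s}$ and invoking \eqref{eq:f7} gives it directly. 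Replacing your intermediate claim with this identity closes the gap without changing the architecture of the proof.
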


\begin{proof}
	The regularity of $\fbar$ follows by \ref{ass:f1}. By computing explicitly the derivative appearing in assumption \ref{ass:f4}, one easily obtains the inequality
	$$
	(1-s)f(s)f''(s)-f(s)f'(s)-(1-s)(f'(s))^2 < 0.
	$$
	Then using this inequality we get for all $s\in(0,1)$
	\begin{align*}
		\fbar'(s)
		&= \frac{(1-s)f(s)f''(s)+f(s)f'(s)-3(1-s)(f'(s))^2}{(1-s)^2f^4(s)} \\
		& < \frac{2f'(s)\bigl[f(s)-(1-s)f'(s)\bigr]}{(1-s)^2f^4(s)}
		= -\frac{2f'(s)[(1-s)f(s)]'}{(1-s)^2f^4(s)} < 0,
	\end{align*}
	where the last inequality follows by \ref{ass:f3} (see also Remark~\ref{rmk:assf}); this proves the strict monotonicity of $\fbar$. The first limit in \eqref{eq:limits-fbar} follows from \eqref{eq:f7}; the second limit in \eqref{eq:limits-fbar} is a consequence of \ref{ass:f2} and \eqref{eq:f6}; for the third limit, one has, arguing as before and using \eqref{eq:f7},
	\begin{align*}
		\frac{|\fbar'(s)|}{\fbar(s)}
		& >\frac{2f'(s)\bigl[(1-s)f'(s)-f(s)\bigr]}{(1-s)^2f^4(s)}\cdot\frac{(1-s)f^3(s)}{f'(s)}
		= \frac{2f'(s)}{f(s)}-\frac{2}{1-s} \to +\infty \quad\text{as } s\to0^+.
	\end{align*}
	Similarly for the fourth limit in \eqref{eq:limits-fbar} we use assumption \ref{ass:f5}, together with \ref{ass:f2} and \eqref{eq:f6}:
	\begin{align*}
		\frac{|\fbar'(s)|}{(1-s)^3}
		& >\frac{2f'(s)\bigl[(1-s)f(s)\bigr]'}{(1-s)^5f^4(s)} 
		= \frac{\bigl[(1-s)f(s)\bigr]'}{(1-s)^3}\cdot\frac{2(1-s)^2f'(s)}{(1-s)^4f^4(s)} \to +\infty \quad\text{as } s\to1^-.
	\end{align*}
	
	Existence and uniqueness of $z_\alpha$ are immediate consequences of the strict monotonicity of $\fbar$ and of \eqref{eq:limits-fbar}. To show that $(1-z_\alpha)f(z_\alpha)>2\alpha$, we observe that \eqref{eq:salpha} and the monotonicity assumption \ref{ass:f4} give
	\begin{equation*}
		(1-z_\alpha)^2f^2(z_\alpha)
		= \frac{(1-z_\alpha)f'(z_\alpha)}{f(z_\alpha)}\cdot(2\alpha)^2
		> \lim_{s\to1^-} \frac{(1-s)f'(s)}{f(s)}\cdot(2\alpha)^2 = (2\alpha)^2,
	\end{equation*}
	where we used \ref{ass:f2} and \eqref{eq:f6} in the last inequality.
\end{proof}

By Cauchy-Lipschitz theorem, for every value of the initial datum $m\in(0,1)$ the Cauchy problem \eqref{eq:ODE1}--\eqref{eq:ODE3} has a unique solution $\bar{y}$ of class $C^2$, which can be continued as long as $y(t)\in(0,1)$ and is defined in a maximal interval $(-T,T)$ for some $T\in(0,+\infty]$. By uniqueness the solution is symmetric with respect to the origin, that is $\bar{y}(t)=\bar{y}(-t)$, and therefore we study the equation only on the positive real axis.
In the following theorem we characterize the behaviour of the solution to \eqref{eq:ODE1}--\eqref{eq:ODE3} in terms on the relation between the parameters $m$ and $\alpha$. We focus on the case $m\in(0,z_\alpha)$; for the case $m\in(z_\alpha,1)$, see Remark~\ref{rmk:mlarger}.

\begin{theorem} \label{thm:ODE}
	Let $\alpha\in(0,\frac{\stress}{2})$ and $m\in(0,z_\alpha)$ be given, let $f$ satisfy \ref{ass:f1}--\ref{ass:f5}, and let $\bar{y}$ be the solution to the Cauchy problem \eqref{eq:ODE1}--\eqref{eq:ODE3}. Then there exists $t_0>0$ such that $\bar{y}(t_0)=z_\alpha$ and $m<\bar{y}(t)<z_\alpha$ for $t\in(0,t_0)$. Moreover:
	\begin{enumerate}
		\item\label{item:ODE1} if $(1-m)f(m)<2\alpha$, then there exists $t_1\in(t_0,+\infty)$ such that $\bar{y}(t_1)=1$, and $\bar{y}$ is strictly increasing in $(0,t_1)$;
		\item\label{item:ODE2} if $(1-m)f(m)=2\alpha$, then $\bar{y}(t)<1$ for all $t\in(0,+\infty)$, $\bar{y}$ is stricly increasing, and $\bar{y}(t)\to1$ as $t\to+\infty$;
		\item\label{item:ODE3} if $(1-m)f(m)>2\alpha$, then $\bar{y}$ oscillates periodically between its minimum $m$ and a maximum $M\in(z_\alpha,1)$: more precisely, there exists $t_2\in(t_0,+\infty)$ such that $\bar{y}(t_2)=M=\max\bar{y}$ and $\bar{y}$ is strictly increasing in $(0,t_2)$. For $t\in(t_2,2t_2)$ one has $\bar{y}(t)=\bar{y}(2t_2-t)$, and $\bar{y}$ is periodic with period $2t_2$.
	\end{enumerate}
	The values of $t_0$, $t_1$, $t_2$, $M$ in the previous statements depend on $m$ and $\alpha$.
\end{theorem}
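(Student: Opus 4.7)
The approach is to exploit the autonomous structure of the ODE via conservation of energy. A direct computation shows that a primitive of the function $h$ from \eqref{def:h} is
\[
H(s) \defeq \frac{1}{8}\Bigl[(1-s)^2 - \frac{(2\alpha)^2}{f^2(s)}\Bigr],
\]
so that multiplying \eqref{eq:ODE1} by $y'$ and integrating yields the identity $(y')^2 = 2\bigl(H(y) - H(m)\bigr)$ along any solution of \eqref{eq:ODE1}--\eqref{eq:ODE3}. The function $H$ tends to $-\infty$ as $s\to0^+$, is strictly increasing on $(0,z_\alpha)$ and strictly decreasing on $(z_\alpha,1)$ (since $H'=h$ has the sign described in Proposition~\ref{prop:salpha}), and satisfies $H(1)=0$ since $(2\alpha)^2/f^2(s)\sim(2\alpha/\stress)^2(1-s)^2$ as $s\to1^-$ by \ref{ass:f2}. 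Because $8H(m)=[(1-m)f(m)-2\alpha][(1-m)f(m)+2\alpha]/f^2(m)$, the sign of $H(m)$ is exactly that of $(1-m)f(m)-2\alpha$, and the three regimes of the theorem correspond precisely to $H(m)<0$, $H(m)=0$, $H(m)>0$.

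First I would establish the initial segment common to all cases: since $h(y)>0$ on $(0,z_\alpha)$ and $\bar{y}'(0)=0$, we have $\bar{y}''>0$ and $\bar{y}'>0$ on a right-neighbourhood of $0$, and $\bar{y}$ reaches $z_\alpha$ at the time $t_0=\int_m^{z_\alpha}\dd y/\sqrt{2(H(y)-H(m))}<+\infty$, finiteness following from the integrable square-root singularity at $y=m$ (where $H(y)-H(m)\sim h(m)(y-m)$ with $h(m)>0$). Then I treat the three cases via the same energy identity. In case \ref{item:ODE1}, $H(m)<0=H(1)$ forces $(\bar y')^2\geq-2H(m)>0$ throughout $[z_\alpha,1]$, so $\bar{y}'$ stays bounded below by a strictly positive constant past $t_0$ and the solution reaches $\bar{y}(t_1)=1$ at $t_1=t_0+\int_{z_\alpha}^1\dd y/\sqrt{2(H(y)-H(m))}<+\infty$, with $\bar{y}'>0$ on $(0,t_1)$. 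In case \ref{item:ODE3}, $H(m)>0=H(1)$ combined with the strict monotonicity of $H$ on $[z_\alpha,1]$ produces a unique $M\in(z_\alpha,1)$ with $H(M)=H(m)$; the same integrability argument as for $t_0$ gives $t_2<+\infty$ with $\bar{y}(t_2)=M$, $\bar{y}'(t_2)=0$, and $\bar{y}''(t_2)=h(M)<0$. Cauchy--Lipschitz uniqueness applied at the point $(t_2,M,0)$ to the two solutions $s\mapsto\bar{y}(t_2+s)$ and $s\mapsto\bar{y}(t_2-s)$ of the same autonomous equation then yields the reflection symmetry $\bar{y}(t_2+s)=\bar{y}(t_2-s)$ for $s\in(0,t_2)$, whence $\bar{y}(2t_2)=m$, $\bar{y}'(2t_2)=0$, and $2t_2$-periodicity follows by iterating. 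In case \ref{item:ODE2}, $H(m)=0=H(1)$ is the separatrix: using \ref{ass:f2} one checks that
\[
H(y)\sim\frac{1}{8}\Bigl[1-\Bigl(\frac{2\alpha}{\stress}\Bigr)^{\!2}\Bigr](1-y)^2 \qquad\text{as } y\to1^-,
\]
with a strictly positive coefficient (since $\alpha<\stress/2$), so the integral $\int^1\dd y/\sqrt{2H(y)}$ diverges logarithmically. Hence $\bar{y}$ approaches $1$ only as $t\to+\infty$, while uniqueness applied to the stationary solution $y\equiv1$ excludes finite-time arrival.

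The main delicacy lies in the separatrix case \ref{item:ODE2}: one must extract the exact quadratic vanishing of $H$ at $y=1$ to distinguish the asymptotic approach $\bar{y}(t)\to1$ from a finite-time arrival that would contradict Cauchy--Lipschitz uniqueness against the equilibrium $y\equiv1$; this is where \ref{ass:f2} enters in a sharp way. The remaining cases reduce to standard one-dimensional phase-plane estimates once the potential $H$ has been identified, and the finer hypotheses \ref{ass:f4}--\ref{ass:f5} are used only indirectly through the existence and uniqueness of $z_\alpha$ already supplied by Proposition~\ref{prop:salpha}.
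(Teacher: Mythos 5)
Your proposal is correct and takes essentially the same route as the paper: both rest on the first integral of the ODE (your potential $H$ is a primitive of $h$, and $(y')^2=2\bigl(H(y)-H(m)\bigr)$ is exactly the paper's \eqref{eq:ODEfirstintegral}), the same convexity argument on $(0,t_0)$, and the same uniqueness/reflection argument for periodicity. The only organizational difference is that you argue forward from the sign of $H(m)$ (equivalently of $(1-m)f(m)-2\alpha$), whereas the paper first sets up the qualitative trichotomy $t_1<\infty$ / $t_2=\infty$ / $t_2<\infty$ and then matches each to the corresponding algebraic condition via the first integral; your explicit quadratic vanishing $H(y)\sim\tfrac18\bigl[1-(2\alpha/\stress)^2\bigr](1-y)^2$ in the separatrix case also makes the convergence $\bar y\to1$ slightly more self-contained than the paper's phrasing.
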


\begin{figure}
	\begin{center}
		\includegraphics[width=7.2cm]{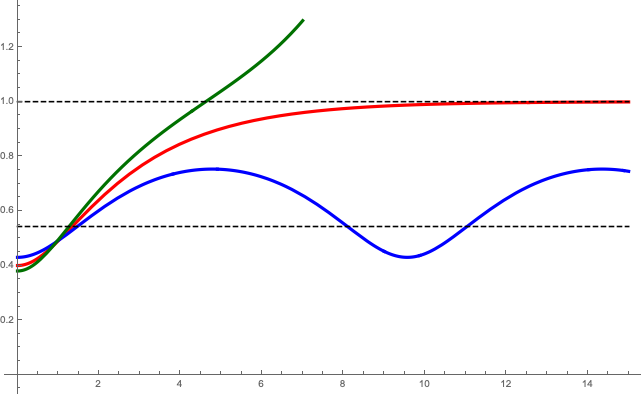} \qquad
		\includegraphics[width=7.2cm]{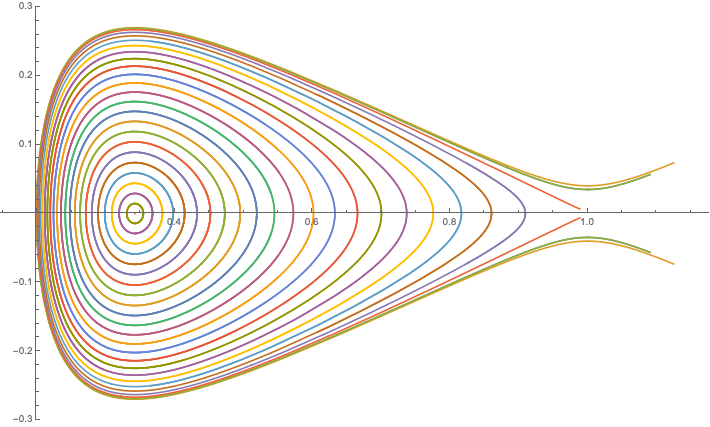}
	\end{center}
	\caption{Left: numerical plots of the solution to the Cauchy problem \eqref{eq:ODE1}--\eqref{eq:ODE3} for different initial values $m$, for the prototype function $f(s)=\frac{s}{1-s}$ (see \eqref{eq:examplefq} with $\stress=q=1$) and $\alpha=0.2$. The two dashed lines correspond to the stationary solutions $\bar{y}\equiv1$ and $\bar{y}\equiv z_\alpha$. The green, red and blue curves correspond to cases \ref{item:ODE1}, \ref{item:ODE2} and \ref{item:ODE3} in Theorem~\ref{thm:ODE} respectively. Right: phase diagram of the solutions to \eqref{eq:ODE1}--\eqref{eq:ODE3} in the plane $(y,y')$.}
	\label{fig:ODE}
\end{figure}

\begin{proof}
	Notice that, since $(1-z_\alpha)f(z_\alpha)>2\alpha$ by Proposition~\ref{prop:salpha} and the map $s\mapsto(1-s)f(s)$ is strictly increasing by assumption \ref{ass:f3},  for $m\in(0,z_\alpha)$ all the three cases may occur. We also observe that, in view of Proposition~\ref{prop:salpha}, one has that $h(s)>0$ for $s\in(0,z_\alpha)$ and $h(s)<0$ for $s\in(z_\alpha,1)$.
	
	The solution $\bar{y}$ satisfies $\bar{y}'(0)=0$ and $\bar{y}''(0)=h(m)>0$, therefore $\bar{y}'>0$ and $\bar{y}$ is strictly increasing and convex in a right neighbourhood of the origin. Since $\bar{y}$ remains convex as long as $\bar{y}(t)<z_\alpha$, there exists $t_0>0$ such that $\bar{y}(t_0)=z_\alpha$, and $\bar{y}$ is convex and strictly increasing in $(0,t_0)$.
	
	At the point $t_0$ we have $\bar{y}'(t_0)>0$ (otherwise $\bar{y}\equiv z_\alpha$) and $\bar{y}''(t_0)=0$, and therefore $\bar{y}(t)>z_\alpha$ and $\bar{y}''(t)=h(\bar{y}(t))<0$ for $t$ in a right neighbourhood of $t_0$, so that $\bar{y}$ becomes concave after $t_0$. We let
	$$
	t_1\defeq\sup\bigl\{t>0\,:\, \bar{y}(s)<1 \text{ for all }s\in(0,t)\bigr\}
	$$
	and we observe that $t_1>t_0$. We can then identify three possible types of solutions.
	
	\smallskip\noindent\textit{Case I.}
	If $t_1\in\R$, then $\bar{y}$ reaches the value $1$ in finite time: indeed it must be $\bar{y}(t_1)=1$, and $\bar{y}'(t_1)>0$ (it cannot be $\bar{y}'(t_1)=0$, or else the solution would coincide with the stationary solution constantly equal to $1$).
	
	\smallskip
	If $t_1=+\infty$, then the solution is defined in the whole positive real line and $\bar{y}(t)<1$ for all $t>0$. We let
	$$
	t_2\defeq\sup\bigl\{t>0\,:\, \bar{y}'(s)>0 \text{ for all }s\in(0,t)\bigr\}
	$$
	and we observe that $t_2>t_0$, as $\bar{y}'(t)>0$ for all $t\in(0,t_0)$.
	
	\smallskip\noindent\textit{Case II.}
	If $t_2=+\infty$, then $\bar{y}'(t)>0$ for all $t>0$. Then the solution is strictly increasing in $(0,+\infty)$, is convex in $(0,t_0)$, concave in $(t_0,\infty)$, and $\bar{y}(t)\to1$ as $t\to+\infty$.
	
	\smallskip\noindent\textit{Case III.}
	If $t_2\in\R$, then $\bar{y}'(t_2)=0$, $M\defeq \bar{y}(t_2)\in(z_\alpha,1)$ and $\bar{y}''(t_2)=h(M)<0$. Then $t_2$ is a local maximum of $\bar{y}$. The function $z(t)\defeq \bar{y}(2t_2-t)$, for $t\in(t_2,2t_2)$, solves the equation \eqref{eq:ODE1} in $(t_2,2t_2)$, with $z(t_2)=M=\bar{y}(t_2)$ and $z'(t_2)=0=\bar{y}'(t_2)$, and therefore by uniqueness of the solution of the Cauchy problem it must be $\bar{y}(t)=z(t)=\bar{y}(2t_2-t)$, for $t\in(t_2,2t_2)$. At the point $2t_2$ we have $\bar{y}(2t_2)=m$, $\bar{y}'(2t_2)=0$, and again by uniqueness we can conclude that $\bar{y}(t)=\bar{y}(t-2t_2)$ for all $t>2t_2$, that is, $\bar{y}$ is periodic with period $2t_2$.
	
	\smallskip
	These are the only three possible behaviours of solutions to \eqref{eq:ODE1}--\eqref{eq:ODE3}, for $m\in(0,z_\alpha)$. See Figure~\ref{fig:ODE} for numerical plots of the three types of solutions. To conclude the proof, we only need to determine the form of the solution in terms of the value of the initial datum. We let
	$$
	\tilde{t}\defeq
	\begin{cases}
	t_1 & \text{in Case I,}\\
	+\infty & \text{in Case II, }\\
	t_2 & \text{in Case III.}
	\end{cases}
	$$
	Since $\bar{y}'>0$ in $(0,\tilde{t})$ in all cases, we can multiply the equation \eqref{eq:ODE1} by $\bar{y}'$ and integrate in $(0,t)$, for $t<\tilde{t}$: we obtain by a change of variables
	\begin{equation*}
		\int_0^t \bar{y}''(\tau)\bar{y}'(\tau)\dd\tau = \int_0^t h(\bar{y}(\tau))\bar{y}'(\tau)\dd\tau = \int_{m}^{\bar{y}(t)}h(s)\dd s,
	\end{equation*}
	and, recalling that $\bar{y}'(0)=0$,
	\begin{align*}
		\frac12\bigl(\bar{y}'(t)\bigr)^2
		& = \int_{m}^{\bar{y}(t)}h(s)\dd s = \int_{m}^{\bar{y}(t)} \frac{1-s}{4}\biggl[\frac{(2\alpha)^2f'(s)}{(1-s)f^3(s)}-1\biggr] \dd s \\
		& = \frac14 \int_m^{\bar{y}(t)} \biggl( (2\alpha)^2\frac{f'(s)}{f^3(s)} - 1 + s \biggr)\dd s \\
		& = \frac14\biggl( -\frac{(2\alpha)^2}{2f^2(\bar{y}(t))} + \frac{(2\alpha)^2}{2f^2(m)} -\bar{y}(t) + \frac12\bar{y}^2(t) + m -\frac12m^2 \biggr).
	\end{align*}
	Therefore for all $t\in(0,\tilde{t})$
	\begin{equation} \label{eq:ODEfirstintegral}
		(\bar{y}')^2 = \frac14\biggl[ (2\alpha)^2\biggl( \frac{1}{f^2(m)} - \frac{1}{f^2(\bar{y})} \biggr) - (1-m)^2 + (1-\bar{y})^2 \biggr].
	\end{equation}
	
	If we are in Case I, then as $t\to\tilde{t}=t_1$ we have $\bar{y}(t)\to1$ and $\bar{y}'(t)\to\bar{y}'(t_1)>0$. Therefore, by letting $t\to t_1$ in \eqref{eq:ODEfirstintegral} we obtain
	\begin{equation*} 
		0<(\bar{y}'(t_1))^2 = \frac14\biggl[ \frac{(2\alpha)^2}{f^2(m)} - (1-m)^2 \biggr] = \frac{(1-m)^2}{4}\biggl[ \biggl(\frac{2\alpha}{(1-m)f(m)}\biggr)^2 - 1 \biggr] 
	\end{equation*}
	and hence $(1-m)f(m)<2\alpha$.
	
	If we are in Case II, by letting $t\to+\infty$ in \eqref{eq:ODEfirstintegral} we obtain, since $\bar{y}(t)\to1$ and $\bar{y}'(t)\to0$,
	\begin{equation*} 
		0=\lim_{t\to+\infty}(\bar{y}'(t))^2 = \frac{(1-m)^2}{4}\biggl[ \biggl(\frac{2\alpha}{(1-m)f(m)}\biggr)^2 - 1 \biggr] 
	\end{equation*}
	and hence $(1-m)f(m)=2\alpha$.
	
	Finally, if we are in Case III, by letting $t\to\tilde{t}=t_2$ in \eqref{eq:ODEfirstintegral} we have, since $\bar{y}'(t_2)=0$ and $\bar{y}(t_2)=M\in(z_\alpha,1)$,
	\begin{align*} 
		0 = \frac14\biggl[ (2\alpha)^2\biggl( \frac{1}{f^2(m)} - \frac{1}{f^2(M)} \biggr) - (1-m)^2 + (1-M)^2 \biggr],
	\end{align*}
	from which we get
	\begin{align*} 
		(2\alpha)^2
		&= \biggl(\frac{(1-m)^2-(1-M)^2}{f^2(M)-f^2(m)}\biggr) f^2(m)f^2(M) \\
		&=\biggl(\frac{f^2(M)(1-m)^2-f^2(M)(1-M)^2}{(1-m)^2 \bigl(f^2(M)-f^2(m)\bigr)}\biggr) \bigl(f(m)(1-m)\bigr)^2 \\
		&<\biggl(\frac{f^2(M)(1-m)^2-f^2(m)(1-m)^2}{(1-m)^2 \bigl(f^2(M)-f^2(m)\bigr)}\biggr) \bigl(f(m)(1-m)\bigr)^2
		=  \bigl(f(m)(1-m)\bigr)^2,
	\end{align*}
	where we used the fact that $f^2(M)(1-M)^2>f^2(m)(1-m)^2$ since $M>m$ and the function $s\mapsto f(s)(1-s)$ is strictly increasing by \ref{ass:f3}. Therefore $(1-m)f(m)>2\alpha$.
	
	Since these are the only possible cases, the characterization of $\bar{y}$ in the statement holds.
\end{proof}

\begin{remark} \label{rmk:M}
	Let $m_\alpha\in(0,z_\alpha)$ be the unique value such that $(1-m_\alpha)f(m_\alpha)=2\alpha$. The three cases \ref{item:ODE1}, \ref{item:ODE2} and \ref{item:ODE3} in Theorem~\ref{thm:ODE} correspond to $m\in(0,m_\alpha)$, $m=m_\alpha$, and $m\in(m_\alpha, z_\alpha)$ respectively. If $m\in(m_\alpha,z_\alpha)$ (case \ref{item:ODE3}) the solution has a maximum value $M\in(z_\alpha,1)$ which is uniquely determined by $\alpha$ and $m$ by the equation
	\begin{equation} \label{eq:M}
		(1-M)^2 - \frac{(2\alpha)^2}{f^2(M)} = (1-m)^2 - \frac{(2\alpha)}{f^2(m)},
	\end{equation}
	which is obtained by evaluating \eqref{eq:ODEfirstintegral} at the maximum point $t_2$. We observe that the function
	$$
	x\mapsto(1-x)^2-\frac{(2\alpha)^2}{f^2(x)}
	$$
	vanishes at $x=m_\alpha$ and at $x=1$, has a maximum point at $x=z_\alpha$, is increasing in $(m_\alpha,z_\alpha)$ and decreasing in $(z_\alpha,1)$, as can be easily checked by noting that its derivative is given by $2(1-x)((2\alpha)^2\fbar(x)-1)$.
	
	It follows that $M$ takes all the values in the interval $(z_\alpha,1)$ when $m\in(m_\alpha,z_\alpha)$, with $M\to1$ as $m\to m_\alpha$, and $M\to z_\alpha$ as $m\to z_\alpha$.
\end{remark}

\begin{remark} \label{rmk:mlarger}
	In the case $m\in(z_\alpha,1)$, the solution $\bar{y}$ is a translation of one of the periodic orbits obtained in Theorem~\ref{thm:ODE}\ref{item:ODE3}. Indeed, by Remark~\ref{rmk:M} every value of $m\in(z_\alpha,1)$ corresponds to the maximum value of one of the solutions with initial datum in $(m_\alpha,z_\alpha)$, so that by uniqueness $\bar{y}$ is a translation of that solution.
\end{remark}

In the next proposition we obtain uniform estimates on the point $t_\eta$ at which the solution to the Cauchy problem \eqref{eq:ODE1}--\eqref{eq:ODE3} reaches a value $\eta$ arbitrarily close to 1 (or the maximum $M$ for the periodic solutions). In the following, we will denote by $\omega(\cdot)$ a generic modulus of continuity (that is, $\omega:[0,1]\to\R$ is a bounded, monotone increasing function such that $\omega(t)\to0$ as $t\to0^+$), which is independent of $\alpha\in(0,\frac{\stress}{2})$ and $m\in(0,z_\alpha)$, but depends ultimately only on $f$. The function $\omega$ might change from line to line.

\begin{proposition} \label{prop:ODE}
	Under the assumptions of Theorem~\ref{thm:ODE}, let $y$ be the solution to \eqref{eq:ODE1}--\eqref{eq:ODE3}. Let $\eta\in(z_\alpha,1)$ be fixed; in the case $(1-m)f(m)>2\alpha$, assume further that $\eta\in(z_\alpha,M)$, where $M\in(z_\alpha,1)$ is the maximum of $y$. Denote by $t_\eta>0$ the first point such that $y(t_\eta)=\eta$.
	
	Then there exists a constant $C_\alpha>0$, depending only on $\alpha\in(0,\frac{\stress}{2})$, and a modulus of continuity $\omega$ independent of $\alpha$ and $m$, such that the following estimates hold:
	\begin{equation}\label{eq:est-teta-1} 
		t_\eta \leq \frac{C_{\alpha}}{\sqrt{1-\eta}} \qquad\qquad\text{with}\quad \sup_{\alpha\in(0,\frac{\stress}{2}-\delta)}C_\alpha<+\infty \quad\text{for all }\delta>0,
	\end{equation}
	\begin{equation}\label{eq:est-teta-2}
		t_\eta \leq \frac{\omega(1-m)}{(1-\eta)^2}\,.
	\end{equation}
	Furthermore, we have the estimate from below
	\begin{equation}\label{eq:est-teta-3}
		t_\eta \geq \log\biggl(\frac{1-z_\alpha + C_{\alpha,m}}{1-\eta + C_{\alpha,m}}\biggr), \qquad\text{where}\quad C_{\alpha,m}\defeq \bigg|1-\biggl(\frac{2\alpha}{(1-m)f(m)}\biggr)^2\bigg|^\frac{1}{2}.
	\end{equation}
\end{proposition}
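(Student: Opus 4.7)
The plan is to convert the problem to an explicit integral using the first integral \eqref{eq:ODEfirstintegral} obtained in the proof of Theorem~\ref{thm:ODE}, and then bound the integrand pointwise. On $(0,t_\eta)$ we have $y'>0$, so $(y')^2 = G(y)/4$ gives, by separation of variables,
\[
t_\eta = \int_m^\eta \frac{2\,dy}{\sqrt{G(y)}}, \qquad G(y) \defeq (1-y)^2 - (1-m)^2 + (2\alpha)^2\Bigl(\frac{1}{f^2(m)} - \frac{1}{f^2(y)}\Bigr).
\]
All three estimates reduce to sharp pointwise bounds on $\sqrt{G}$.

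For the lower bound \eqref{eq:est-teta-3} I would bound $G$ from above. Dropping the nonpositive term $-(2\alpha)^2/f^2(y)$ and writing $A \defeq (2\alpha)^2/f^2(m) - (1-m)^2 = (1-m)^2\bigl[(2\alpha/((1-m)f(m)))^2 - 1\bigr]$, one has $|A| = (1-m)^2 C_{\alpha,m}^2 \leq C_{\alpha,m}^2$ because $1-m<1$; hence $G(y) \leq (1-y)^2 + C_{\alpha,m}^2 \leq ((1-y) + C_{\alpha,m})^2$, i.e.\ $\sqrt{G(y)} \leq (1-y) + C_{\alpha,m}$. Restricting the integration domain from $(m,\eta)$ to $(z_\alpha,\eta)$ (which only weakens the lower bound) and computing the antiderivative of $1/((1-y)+C_{\alpha,m})$ yields $t_\eta \geq 2\log\bigl((1-z_\alpha+C_{\alpha,m})/(1-\eta+C_{\alpha,m})\bigr)$, implying \eqref{eq:est-teta-3}.

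For the upper bounds \eqref{eq:est-teta-1} and \eqref{eq:est-teta-2} I would split $t_\eta = t_0 + (t_\eta - t_0)$ at the stationary point $z_\alpha$ and control each piece separately. The piece $t_0$ is bounded by a constant $C_\alpha$ via small-oscillation analysis around the center equilibrium $z_\alpha$ (giving $t_0 \approx \pi/(2\Omega_\alpha)$ with $\Omega_\alpha \defeq \sqrt{-h'(z_\alpha)}$ when $m$ is near $z_\alpha$) combined with a uniform positive lower bound on $G$ on compact subsets of $(m,z_\alpha)$ when $m$ is bounded away from $z_\alpha$. The singular piece $t_\eta - t_0$ is controlled via the asymptotic expansion of $G$ near $y=1$: combining \ref{ass:f2} with the consequence $(1-s)^2 f'(s) \to \stress$ of \ref{ass:f4} gives
\[
G(y) = \mu^2(1-y)^2(1+o(1)) + A \quad\text{as } y\to 1, \qquad \mu^2 \defeq 1 - (2\alpha/\stress)^2 \in (0,1).
\]
In Case III of Theorem~\ref{thm:ODE} the turning-point factorization $G(y) \approx 2\mu^2(1-M)(M-y)$ near the maximum $M$ reduces $t_\eta - t_0$ to an explicit elliptic-type integral $\int dy/\sqrt{(1-y)(M-y)}$ of order $O(1/\sqrt{1-\eta})$, yielding \eqref{eq:est-teta-1}; Cases I and II give an even stronger logarithmic bound. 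The modulus $\omega(1-m)$ in \eqref{eq:est-teta-2} emerges from the small-oscillation analysis: the constraint $m<z_\alpha$ with $m\to 1$ forces $\alpha\to\stress/2$, and via \ref{ass:f5} the frequency $\Omega_\alpha$ diverges fast enough to contract the oscillation period.

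The main technical obstacle is the uniformity required in the final bounds: the constant $C_\alpha$ in \eqref{eq:est-teta-1} must be locally bounded on $(0,\stress/2)$, while the modulus $\omega$ in \eqref{eq:est-teta-2} must be genuinely independent of both $\alpha$ and $m$. Both features hinge on the quantitative rate in \ref{ass:f5} for the divergence $\fbar'(s) \to -\infty$ at $s=1$: it controls simultaneously the size of the $o(1)$-remainder in the expansion of $G$ (uniformly in $\alpha$ away from $\stress/2$) and the divergence rate of $\Omega_\alpha$ as $\alpha \to \stress/2$. Extracting a genuine modulus of continuity $\omega(1-m)$, as opposed to a constant that merely vanishes as $m \to 1$, is the most delicate step.
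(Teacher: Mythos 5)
Your conversion to the integral $t_\eta = 2\int_m^\eta dy/\sqrt{G(y)}$ is exactly the paper's starting point (with $G=4\Psi$), and your proof of the lower bound~\eqref{eq:est-teta-3} is correct and complete -- it even yields the slightly sharper constant $2\log(\cdot)$, which the paper loses by a rough overestimate of $\sqrt{\Psi}$. The split $t_\eta = t_0 + (t_\eta - t_0)$ at $z_\alpha$ also matches the paper.

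The upper bounds~\eqref{eq:est-teta-1}--\eqref{eq:est-teta-2}, however, are not actually proved, and the two tools you invoke do not by themselves cover the whole range of parameters. First, for the piece $t_0 = 2\int_m^{z_\alpha} dy/\sqrt{G(y)}$, your proposed ``uniform positive lower bound on $G$ on compact subsets of $(m,z_\alpha)$'' fails at the left endpoint: $G(m)=0$, so the integrand is singular there, and what is needed is a quantitative lower bound of the form $G(s)\gtrsim (z_\alpha-s)(s-m)$ valid on all of $(m,z_\alpha)$ with constants uniform in $m,\alpha$; the small-oscillation approximation $t_0\approx \pi/(2\Omega_\alpha)$ handles only the regime $m$ close to $z_\alpha$. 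Second, for $t_\eta - t_0$, the expansion $G(y)=\mu^2(1-y)^2(1+o(1))+A$ is an asymptotic statement near $y=1$, but the integration domain $(z_\alpha,\eta)$ need not be near $1$ (indeed $z_\alpha$ can be close to $0$ when $\alpha$ is small), and the size of the $o(1)$ must be made uniform in $\alpha$. These are precisely the obstacles you flag as ``the main technical obstacle'' -- which is to say, the proposal identifies the difficulty without resolving it.

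The paper overcomes both issues with a single mechanism: applying the mean value theorem twice to $\Psi$ and exploiting the strict monotonicity of $\fbar$ produces, on \emph{all} of $(z_\alpha,\eta)$ (resp.\ $(m,z_\alpha)$), a lower bound of the form $\Psi(s)\gtrsim (1-\eta)(\eta-s)(s-z_\alpha)\cdot \inf (\fbar(z_\alpha)-\fbar)/\fbar$, so that $\int ds/\sqrt{\Psi}$ reduces to the exact Beta integral $\int ds/\sqrt{(\eta-s)(s-z_\alpha)}=\pi$. The uniformity of the constant is then obtained by a three-case analysis (whether $\fbar(\xi)/\fbar(z_\alpha)\leq \tfrac12$, whether $\xi$ is near $1$, or neither), using \ref{ass:f4}--\ref{ass:f5} and the limits in \eqref{eq:limits-fbar} for the ratio $|\fbar'|/\fbar$. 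This step -- converting the pointwise hypotheses on $f$ into uniform control of $\Psi$ over the whole interval -- is the crux of the proposition, and it is missing from the proposal. I would regard the upper-bound part of the proposal as a plausible heuristic rather than a proof.
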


\begin{proof}
	We first obtain a uniform estimate of the point $t_0$ where there is a change of convexity of the solution $y$, as in the statement of Theorem~\ref{thm:ODE}: recall that for $t\in(0,t_0)$ the solution is strictly increasing with $m<y(t)<z_\alpha$.
	
	From the proof of Theorem~\ref{thm:ODE}, see in particular \eqref{eq:ODEfirstintegral}, we have that
	\begin{equation}\label{proof:est-1}
		(y')^2 = \frac{(1-y)^2}{4}\biggl[ 1 - \biggl(\frac{2\alpha}{(1-y)f(y)}\biggr)^2 \biggr] - \frac{(1-m)^2}{4}\biggl[ 1 - \biggl(\frac{2\alpha}{(1-m)f(m)}\biggr)^2 \biggr] \eqqcolon \Psi(y).
	\end{equation}
	By a straightforward computation and recalling the definition \eqref{def:fbar} of $\fbar$ and \eqref{eq:salpha} we find
	\begin{equation} \label{proof:est-2}
		\Psi'(s)=\frac12(2\alpha)^2(1-s)(\fbar(s)-\fbar(z_\alpha))
	\end{equation}
	and in particular $\Psi'(s)>0$ for $s\in(m,z_\alpha)$ since $\fbar$ is strictly decreasing (see Proposition~\ref{prop:salpha}).
	By repeatedly applying the mean value theorem we have for all $s\in(m,z_\alpha)$ (using also $\Psi(m)=0$)
	\begin{align} \label{proof:est-3}
		\Psi(s) &=  \Psi(m) + \Psi'(\xi)(s-m) & & \text{for some }\xi\in(m,s) \nonumber\\
		&= \frac12(2\alpha)^2(1-\xi)(\fbar(\xi)-\fbar(z_\alpha))(s-m) \nonumber\\
		&= \frac{1}{2\fbar(z_\alpha)}(1-\xi)(-\fbar'(\zeta))(z_\alpha-\xi)(s-m) & & \text{for some }\zeta\in(\xi,z_\alpha) \nonumber\\
		& \geq \frac12\Bigl(-\frac{\fbar'(\zeta)}{\fbar(\zeta)}\Bigr)(1-z_\alpha)(z_\alpha-s)(s-m),
	\end{align}
	where we used in particular \eqref{eq:salpha} in the third equality, and the monotonicity of the function $\fbar$ in the last inequality. Observe now that, being $\fbar'<0$ and continuous in $(0,1)$, and by \eqref{eq:limits-fbar}, the ratio $|\fbar'|/\fbar$ is uniformly bounded from below by a positive constant in $(0,\frac12)$, whereas if $\zeta\in(\frac12,1)$ one has, by the fourth limit in \eqref{eq:limits-fbar}, that $|\fbar'(\zeta)|/\fbar(\zeta)\geq |\fbar'(\zeta)|/\fbar(\frac12) \geq (1-\zeta)^3/\omega(1-m)$, for a uniform modulus of continuity $\omega$. Therefore we can write
	\begin{equation} \label{proof:est-4}
		\inf_{\zeta\in(m,z_\alpha)}\frac{|\fbar'(\zeta)|}{\fbar(\zeta)} \geq \frac{(1-z_\alpha)^3}{\omega(1-m)}\,.
	\end{equation}
	Hence combining \eqref{proof:est-1}, \eqref{proof:est-3}, and \eqref{proof:est-4} we find
	\begin{align}\label{stimat0}
		t_0
		& = \int_0^{t_0} \frac{y'(t)}{\sqrt{\Psi(y(t))}} \dd t
		= \int_{m}^{z_\alpha} \frac{\dd s}{\sqrt{\Psi(s)}}
		\leq \frac{\omega(1-m)}{(1-z_\alpha)^2}\int_m^{z_\alpha}\frac{\dd s}{\sqrt{z_\alpha-s}\sqrt{s-m}}
	\end{align}
	which eventually yields
	\begin{equation} \label{proof:est-t0}
		t_0 \leq \pi \frac{\omega(1-m)}{(1-z_\alpha)^2}\,.
	\end{equation}
	
	We next fix $\eta>z_\alpha$ as in the statement. By the properties of the solution, there exists a point $t_\eta>t_0$ such that $y(t_\eta)=\eta$; in the interval $(t_0,t_\eta)$ the solution satisfies $z_\alpha<y(t)<\eta$, $y'(t)>0$. As before we have from \eqref{proof:est-1}
	\begin{equation} \label{proof:est-5}
		t_\eta - t_0
		= \int_{t_0}^{t_\eta} \frac{y'(t)}{\sqrt{\Psi(y(t))}} \dd t
		= \int_{z_\alpha}^{\eta} \frac{\dd s}{\sqrt{\Psi(s)}}\,,
	\end{equation}
	and therefore we need to estimate from below $\Psi(s)$ for $s\in(z_\alpha,\eta)$. Notice that by \eqref{proof:est-2} and monotonicity of $\fbar$, $\Psi$ is decreasing in $(z_\alpha,1)$. For all $s\in(z_\alpha,\eta)$ we have by the mean value theorem, using also $\Psi(\eta)\geq0$, \eqref{eq:salpha}, and \eqref{proof:est-2},
	\begin{align} \label{proof:est-6}
		\Psi(s) &=  \Psi(\eta) + \Psi'(\xi)(s-\eta) \nonumber\\
		&\geq \frac12(2\alpha)^2(1-\xi)(\fbar(z_\alpha)-\fbar(\xi))(\eta-s) \nonumber\\
		& \geq \frac12(1-\eta)(\eta-s)\frac{\fbar(z_\alpha)-\fbar(\xi)}{\fbar(z_\alpha)}
	\end{align}
	for some $\xi\in(s,\eta)$.
	We next bound from below the quotient on the right-hand side of \eqref{proof:est-6}, and we consider first the case $\alpha$ uniformly bounded away from $\frac{\stress}{2}$ (which will prove \eqref{eq:est-teta-1}), and then the case $\alpha$ in a small neighbourhood of $\frac{\stress}{2}$ (which will prove \eqref{eq:est-teta-2}). It is important to recall from Proposition~\ref{prop:salpha} that $z_\alpha$ depends continuously and monotonically on $\alpha$, and that $z_\alpha\to1$ as $\alpha\to\frac{\stress}{2}$.
	
	Notice first that
	\begin{equation} \label{proof:est-7}
		\text{if }\quad \frac{\fbar(\xi)}{\fbar(z_\alpha)} \leq \frac12  \qquad\text{then}\qquad \frac{\fbar(z_\alpha)-\fbar(\xi)}{\fbar(z_\alpha)} = 1 - \frac{\fbar(\xi)}{\fbar(z_\alpha)} \geq \frac12 \geq \frac12(\xi-z_\alpha).
	\end{equation}
	Let $\delta>0$ be such that $\alpha<\frac{\stress}{2}-\delta$. Then $z_\alpha<1-c_\delta$ for some $c_\delta>0$ depending only on $\delta$. We have that
	\begin{equation} \label{proof:est-8a}
		\text{if }\quad \xi \geq 1-\frac{c_\delta}{2} \qquad\text{then}\qquad \frac{\fbar(z_\alpha)-\fbar(\xi)}{\fbar(z_\alpha)} \geq 1 - \frac{\fbar(1-\frac{c_\delta}{2})}{\fbar(1-c_\delta)} \eqqcolon C_\delta^1 \geq C_\delta^1(\xi-z_\alpha),
	\end{equation}
	with $C_\delta^1>0$ depending only on $\delta$, by the properties of $\fbar$.
	If instead $\frac{\fbar(\xi)}{\fbar(z_\alpha)} \geq \frac12$ and $\xi \leq 1-\frac{c_\delta}{2}$ we obtain
	\begin{align*}
		\frac{\fbar(z_\alpha)-\fbar(\xi)}{\fbar(z_\alpha)}
		& \geq \frac{\fbar(z_\alpha)-\fbar(\xi)}{\xi-z_\alpha}\cdot\frac{\xi-z_\alpha}{2\fbar(\xi)}
		= -\fbar'(\zeta)\cdot\frac{\xi-z_\alpha}{2\fbar(\xi)} \qquad\qquad\text{for some }\zeta\in(z_\alpha,\xi) \\
		& \geq \frac{|\fbar'(\zeta)|}{2\fbar(\zeta)} \cdot (\xi-z_\alpha)
		\geq \biggl(\inf_{(0,1-\frac{c_\delta}{2})}\frac{|\fbar'|}{2\fbar}\biggr) (\xi-z_\alpha) \eqqcolon C_\delta^2(\xi-z_\alpha)
	\end{align*}
	where again $C_\delta^2>0$ by the properties of $\fbar$ in Proposition~\ref{prop:salpha}. Hence
	\begin{equation} \label{proof:est-8b}
		\text{if }\quad \frac{\fbar(\xi)}{\fbar(z_\alpha)} \geq \frac12 \quad\text{and}\quad \xi \leq 1-\frac{c_\delta}{2}
		\qquad\qquad\text{then}\qquad \frac{\fbar(z_\alpha)-\fbar(\xi)}{\fbar(z_\alpha)} \geq C_\delta^2(\xi-z_\alpha).
	\end{equation}
	Therefore by inserting \eqref{proof:est-7}, \eqref{proof:est-8a}, and \eqref{proof:est-8b} into \eqref{proof:est-6} we find that for all $\alpha<\frac{\stress}{2}-\delta$
	\begin{equation*}
		\Psi(s) \geq C_\delta(1-\eta)(\eta-s)(\xi-z_\alpha) \geq C_\delta(1-\eta)(\eta-s)(s-z_\alpha) \qquad\text{for all }s\in(z_\alpha,\eta)
	\end{equation*}
	for a uniform constant $C_\delta>0$ depending only on $\delta$. In turn by \eqref{proof:est-5} we conclude that 
	\begin{equation*}
		t_\eta - t_0 \leq \frac{1}{\sqrt{C_\delta}\sqrt{1-\eta}}\int_{z_\alpha}^\eta \frac{\dd s}{\sqrt{\eta-s}\sqrt{s-z_\alpha}} = \frac{\pi}{\sqrt{C_\delta}\sqrt{1-\eta}}
	\end{equation*}
	which combined with \eqref{proof:est-t0} proves the estimate \eqref{eq:est-teta-1} in the statement.
	
	We next show \eqref{eq:est-teta-2}. If $\frac{\fbar(\xi)}{\fbar(z_\alpha)} \geq \frac12$ then by using the last condition in \eqref{eq:limits-fbar} and arguing similarly to \eqref{proof:est-4} we have
	\begin{align*}
		\frac{\fbar(z_\alpha)-\fbar(\xi)}{\fbar(z_\alpha)}
		& \geq \frac{\fbar(z_\alpha)-\fbar(\xi)}{\xi-z_\alpha}\cdot\frac{\xi-z_\alpha}{2\fbar(\xi)}
		= -\fbar'(\zeta)\cdot\frac{\xi-z_\alpha}{2\fbar(\xi)} \qquad\qquad\text{for some }\zeta\in(z_\alpha,\xi) \\
		& \geq \frac{|\fbar'(\zeta)|}{2\fbar(\zeta)} \cdot (\xi-z_\alpha)
		\geq \frac{(1-\eta)^3}{\omega(1-z_\alpha)}\cdot(\xi-z_\alpha)
	\end{align*}
	for a uniform modulus of continuity $\omega$. Therefore combining this estimate with \eqref{proof:est-7} and inserting them into \eqref{proof:est-6} we can write
	\begin{equation*}
		\Psi(s) \geq \frac{(1-\eta)^4}{\omega(1-z_\alpha)}(\eta-s)(\xi-z_\alpha) \geq \frac{(1-\eta)^4}{\omega(1-z_\alpha)}(\eta-s)(s-z_\alpha) \qquad\text{for all }s\in(z_\alpha,\eta)
	\end{equation*}
	and in turn by \eqref{proof:est-5} we conclude that 
	\begin{equation*}
		t_\eta - t_0 \leq \frac{\omega(1-z_\alpha)}{(1-\eta)^2}\int_{z_\alpha}^\eta \frac{\dd s}{\sqrt{\eta-s}\sqrt{s-z_\alpha}} = \frac{\pi\,\omega(1-z_\alpha)}{(1-\eta)^2}\,.
	\end{equation*}
	By combining this estimate with \eqref{proof:est-t0} we obtain \eqref{eq:est-teta-2}.
	
	We eventually prove the estimate from below \eqref{eq:est-teta-3}. By the assumptions \ref{ass:f2}--\ref{ass:f3} we have that $(1-s)f(s)\leq \stress$ for all $s\in(0,1)$, so that we can bound the function $\Psi$ in \eqref{proof:est-1} by
	\begin{equation*}
		\Psi(s) \leq \frac{(1-s)^2}{4}\biggl[ 1 - \biggl(\frac{2\alpha}{\stress}\biggr)^2 \biggr] + \frac{(1-m)^2}{4}\bigg| 1 - \biggl(\frac{2\alpha}{(1-m)f(m)}\biggr)^2 \bigg|
	\end{equation*}
	and in turn we obtain the rough estimate $\sqrt{\Psi(s)}\leq (1-s) + C_{\alpha,m}$, where $C_{\alpha,m}$ is the constant defined in \eqref{eq:est-teta-3}. By inserting this inequality into \eqref{proof:est-5} and integrating we deduce \eqref{eq:est-teta-3} by an elementary computation.
\end{proof}

We conclude this section by discussing the behaviour of solutions to \eqref{eq:ODE1}--\eqref{eq:ODE3} in the case $\alpha\geq\frac{\stress}{2}$.

\begin{proposition} \label{prop:ODE2}
	Let $\alpha\geq\frac{\stress}{2}$ and $m\in(0,1)$ be given, let $f$ satisfy \ref{ass:f1}--\ref{ass:f5}, and let $\bar{y}$ be the solution to the Cauchy problem \eqref{eq:ODE1}--\eqref{eq:ODE3}. Then there exists $t_1>0$ such that $\bar{y}(t_1)=1$ and $\bar{y}(t)$ is monotone increasing in $(0,t_1)$. Moreover, denoting for $\eta\in(m,1)$ by $t_\eta\in(0,t_1)$ the point such that $\bar{y}(t_\eta)=\eta$, the estimate \eqref{eq:est-teta-2} holds.
\end{proposition}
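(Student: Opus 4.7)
My plan is to adapt the proof scheme of case~\ref{item:ODE1} of Theorem~\ref{thm:ODE} and of Proposition~\ref{prop:ODE} to the regime $\alpha\geq\stress/2$, where the inner stationary point $z_\alpha$ of Proposition~\ref{prop:salpha} no longer lies in $(0,1)$ and consequently $h(s)>0$ throughout the whole interval.

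The monotonicity statement follows as in case~\ref{item:ODE1} of Theorem~\ref{thm:ODE}: since Proposition~\ref{prop:salpha} gives $\fbar(s)>1/\stress^2$ on $(0,1)$, the hypothesis $\alpha\geq\stress/2$ yields $(2\alpha)^2\fbar(s)>1$, so $h(s)>0$ on $(0,1)$ and $\bar{y}$ is strictly convex while it stays in $(0,1)$; together with $\bar{y}'(0)=0$ this forces $\bar{y}$ to be strictly increasing. The first-integral identity analogous to \eqref{eq:ODEfirstintegral} gives $(\bar{y}')^2=\Psi(\bar{y})$ with $\Psi(1)=\frac{(1-m)^2}{4}\bigl[\bigl(\tfrac{2\alpha}{(1-m)f(m)}\bigr)^2-1\bigr]>0$, since assumptions \ref{ass:f2}--\ref{ass:f3} yield $(1-m)f(m)<\stress\leq 2\alpha$. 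Hence $\bar{y}'$ stays bounded away from $0$ as $\bar{y}\to 1^-$, which via the first integral rules out a finite limit $y_\infty<1$ and forces $\bar{y}$ to reach $1$ in finite time $t_1$.

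For the estimate \eqref{eq:est-teta-2} I would exploit the concavity of $\Psi$. Differentiating gives $\Psi'(s)=\frac{1-s}{2}\bigl[(2\alpha)^2\fbar(s)-1\bigr]$, a product of two positive decreasing factors on $(m,1)$; hence $\Psi'$ is decreasing and $\Psi(s)\geq\Psi'(s)(s-m)$ for $s\in(m,\eta)$. To bound $\Psi'$ from below I would write $(2\alpha)^2\fbar(s)-1\geq\stress^2\bigl[\fbar(s)-1/\stress^2\bigr]=\stress^2\int_s^1|\fbar'(r)|\dd r$ and invoke the fourth limit in \eqref{eq:limits-fbar} to produce a modulus $\omega$, depending only on $f$, with $|\fbar'(r)|\geq(1-r)^3/\omega(1-r)$ on $(0,1)$; integration and monotonicity of $\omega$ then yield $\Psi(s)\geq C(1-s)^5(s-m)/\omega(1-m)$ on $(m,\eta)$.

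Inserting this into $t_\eta=\int_m^\eta\dd s/\sqrt{\Psi(s)}$ and evaluating via the substitutions $u=(s-m)/(1-m)$ and $u=\sin^2\theta$ produces a bound proportional to $\tan\theta^*+\tfrac{1}{3}\tan^3\theta^*$ with $\tan\theta^*=\sqrt{(\eta-m)/(1-\eta)}$. Using $(\eta-m)\leq(1-m)$ and $(1-\eta)\leq(1-m)$, each summand is controlled by a multiple of $\sqrt{\omega(1-m)}/(1-\eta)^2$, which gives \eqref{eq:est-teta-2} once one relabels the modulus as $\tilde\omega(t)=C\sqrt{\omega(t)}$. The main technical difficulty is that \ref{ass:f5} yields only the crude bound $(2\alpha)^2\fbar(s)-1\gtrsim(1-s)^4/\omega(1-s)$, which is coarser than what holds for explicit $f$ (for the prototype $f(s)=s/(1-s)$ one actually has $\stress^2\fbar(s)-1\sim 3(1-s)$); the resulting loss of half a power of $(1-\eta)$ in the integral has to be absorbed into the modulus by trading factors of $(1-\eta)$ for factors of $(1-m)$ in the last step.
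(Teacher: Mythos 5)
Your argument is correct, but it proves the time estimate by a route genuinely different from the paper's. The paper establishes \eqref{eq:est-teta-2} for $\alpha\geq\frac{\stress}{2}$ by a \emph{comparison}: it picks an auxiliary $\tilde\alpha\in(0,\frac{\stress}{2})$ with $m<z_{\tilde\alpha}$ and $(1-m)f(m)<2\tilde\alpha$ (possible since $z_{\tilde\alpha}\to1^-$ as $\tilde\alpha\to(\stress/2)^-$), notes that the solution $\tilde y$ with parameter $\tilde\alpha$ lies below $\bar y$ because $h$ in \eqref{def:h} is monotone in $\alpha$, and transfers the estimate of Proposition~\ref{prop:ODE} for $\tilde y$ to $\bar y$ via $t_\eta\leq\tilde t_\eta$. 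You instead run the $\Psi$-analysis directly in the regime $\alpha\geq\frac{\stress}{2}$: from $\Psi'(s)=\frac{1-s}{2}[(2\alpha)^2\fbar(s)-1]$, positive and decreasing since $\fbar>1/\stress^2$ on $(0,1)$, you obtain $\Psi(s)\geq\Psi'(s)(s-m)$, then use $(2\alpha)^2\geq\stress^2$ and the fundamental theorem of calculus $\fbar(s)-\frac{1}{\stress^2}=\int_s^1|\fbar'|$, together with the fourth limit in \eqref{eq:limits-fbar} in the form $|\fbar'(r)|\geq(1-r)^3/\omega(1-r)$, to get $\Psi(s)\gtrsim(1-s)^5(s-m)/\omega(1-m)$; the substitution $u=\sin^2\theta$ and $1-\eta\leq 1-m$ then close the estimate with a modulus $\tilde\omega=C\sqrt\omega$. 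Both approaches are valid: the paper's is shorter and recycles Proposition~\ref{prop:ODE}, but it silently invokes a comparison principle for the Cauchy problem (which would still require a short justification, e.g.\ via the first integral $t(\cdot)=\int_m^\cdot\dd s/\sqrt{\Psi_\alpha(s)}$ and monotonicity of $\Psi_\alpha$ in $\alpha$), while yours is fully self-contained, at the cost of redoing the elementary integral computation and, as you note, absorbing the half-power loss in $(1-\eta)$ into the modulus.
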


\begin{proof}
	By the properties of $\fbar$ in Proposition~\ref{prop:salpha} and since $\alpha\geq\frac{\stress}{2}$, the function $h$ in \eqref{def:h} is strictly positive in $(0,1)$. Therefore, since initially we have $\bar{y}'(0)=0$ and $\bar{y}''(0)=h(m)>0$, the solution is strictly increasing and convex for $t>0$ as long as $\bar{y}\in(0,1)$. The existence of a point $t_1>0$ such that $\bar{y}(t_1)=1$ follows easily.
	
	Fix now $\tilde{\alpha}\in(0,\frac{\stress}{2})$ such that $m\in(0,z_{\tilde{\alpha}})$ and $(1-m)f(m)<2\tilde{\alpha}$, and let $\tilde{y}$ be the solution to the Cauchy problem \eqref{eq:ODE1}--\eqref{eq:ODE3} with $\alpha$ replaced by $\tilde{\alpha}$. Since $\alpha\geq\frac{\stress}{2}>\tilde{\alpha}$, by comparison we have $\tilde{y}\leq \bar{y}$.
	
	We can apply to $\tilde{y}$ the qualitative analysis in Theorem~\ref{thm:ODE} and Proposition~\ref{prop:ODE} to deduce, in particular, that for all $\eta\in(m,1)$ there exists $\tilde{t}_\eta$ such that $\tilde{y}(\tilde{t}_\eta)=\eta$, and $\tilde{t}_\eta$ obeys the estimate \eqref{eq:est-teta-2}. Then by comparison $t_\eta\leq\tilde{t}_\eta$ obeys the same estimate, as desired.
\end{proof}


\section{Properties of the cohesive energy density, old and new} \label{sect:g}

We discuss in this section the main properties of the cohesive energy density $g$ defined by \eqref{def:g}, appearing in the $\Gamma$-limit of the phase-field functionals \eqref{def:Feps}. Most of the properties of $g$ have been studied in \cite{ConFocIur16,BonConIur21}, but we also include here some new results: in Proposition~\ref{prop:g3} we prove that $g\in C^1([0,+\infty))$ and a characterization of the derivative of $g$, and in Proposition~\ref{prop:sfrac} we determine explicitly the value of the threshold of complete fracture $\sfrac$ (see \eqref{def:sfrac}). Eventually we determine the asymptotic expansion of $g$ near the origin in terms of the behaviour of $f(s)$ as $s\to1$, see Proposition~\ref{prop:g4}: this is only relevant in connection with the existence of critical points of pre-fractured type, see Remark~\ref{rmk:critical-points}.

The following properties are proved in \cite[Proposition~4.1]{ConFocIur16}.

\begin{proposition}\label{prop:g1}
Assume that $f$ satisfies the assumptions \ref{ass:f1}, \ref{ass:f2}, \ref{ass:f3}. The function $g$ defined in \eqref{def:g} enjoys the following properties:
\begin{enumerate}
\item $g(0)=0$, and $g$ is subadditive;
\item g is nondecreasing, $g(s) \leq \min\{1,\stress s\}$, and $g$ is Lipschitz continuous with Lipschitz constant $\stress$;
\item $\lim_{s\to+\infty}g(s)=1$;
\item  $\lim_{s\to0^+}\frac{g(s)}{s}=\stress$.
\end{enumerate}
\end{proposition}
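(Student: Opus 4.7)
The plan is to combine four elementary ingredients: explicit competitors for the upper bounds, a single AM-GM argument obtained by splitting the Modica--Mortola potential, the strict monotonicity of $F(s) \defeq (1-s)f(s)$ from \ref{ass:f3}, and a coarea-formula lower bound for the transition cost of $\beta$. I would first dispose of the easy statements: $g(0)=0$ holds by testing with $(\alpha,\beta) \equiv (0,1)$; subadditivity $g(s_1+s_2) \leq g(s_1)+g(s_2)$ follows from a standard concatenation that uses $\beta_i \to 1$ at $\pm\infty$ and $\alpha_i' \in L^1(\R)$ to truncate near-optimal competitors so that $\beta_i \equiv 1$ and $\alpha_i$ is constant on a half-line, then translates them to disjoint supports and glues (incrementing the second $\alpha$ by $s_1$ for continuity); monotonicity is a scaling observation, since for $(\alpha,\beta)\in\mathcal{U}_{s_2}$ and $s_1<s_2$ the pair $((s_1/s_2)\alpha,\beta)$ lies in $\mathcal{U}_{s_1}$ with a strictly smaller elastic term.

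Next I would establish the two sharp upper bounds by explicit constructions. For $g(s)\leq 1$, take the Modica--Mortola optimal profile $\beta(t)=1-e^{-|t|/2}$ (whose surface cost integrates to exactly $1$) and concentrate $\alpha'=(s/\e)\mathds{1}_{[-\e/2,\e/2]}$ near the minimum $\beta(0)=0$; since $f(0)=0$ and $f\in C^2$ by \ref{ass:f1}, one has $f(\beta(t))=O(|t|)$ near zero, so the elastic cost is $O(s^2\e)\to 0$ as $\e\to 0^+$. For $g(s)\leq\stress s$, take $\beta\equiv\bar\beta$ on $[-T,T]$ (smoothly connected to $1$ at the endpoints by equipartition profiles) with $\alpha'=\frac{s}{2T}\mathds{1}_{[-T,T]}$; a direct computation yields
\begin{equation*}
\mathcal{G}(\alpha,\beta) = \frac{f^2(\bar\beta)\,s^2}{2T} + \frac{(1-\bar\beta)^2\, T}{2} + (1-\bar\beta)^2,
\end{equation*}
optimal at $T=f(\bar\beta)s/(1-\bar\beta)$, which gives $F(\bar\beta)\,s + (1-\bar\beta)^2$; sending $\bar\beta\to 1^-$ and invoking \ref{ass:f2} produces $g(s)\leq\stress\, s$. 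Combining monotonicity, subadditivity, and this bound yields the Lipschitz estimate in (ii): $0 \leq g(s_2)-g(s_1) \leq g(s_2-s_1) \leq \stress(s_2-s_1)$ for $0\leq s_1\leq s_2$.

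The two limits in (iii) and (iv) both come from a single lower bound. Splitting $(1-\beta)^2/4 = \lambda\frac{(1-\beta)^2}{4} + (1-\lambda)\frac{(1-\beta)^2}{4}$ with $\lambda\in[0,1]$ and applying pointwise AM-GM to each of the two brackets yields
\begin{equation*}
\mathcal{G}(\alpha,\beta) \geq \sqrt\lambda \int_\R F(\beta)|\alpha'|\dd t + \sqrt{1-\lambda}\int_\R(1-\beta)|\beta'|\dd t \geq \sqrt\lambda\, F(m)\, s + \sqrt{1-\lambda}\,(1-m)^2,
\end{equation*}
with $m\defeq\min_\R\beta$, the first bound using strict monotonicity of $F$ together with $\bigl|\int\alpha'\bigr|=s$, the second the coarea formula applied to $\beta$. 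Taking near-optimal $(\alpha_s,\beta_s)$: as $s\to 0^+$, the upper bound $g(s)\leq\stress\, s$ with $\lambda=0$ forces $m_s\to 1$, then $\lambda=1$ yields $g(s)/s \geq F(m_s) \to \stress$, proving (iv); as $s\to+\infty$, the bound $g(s)\leq 1$ with $\lambda=1$ forces $F(m_s)\to 0$, hence $m_s\to 0$ by strict monotonicity of $F$, then $\lambda=0$ gives $g(s)\geq(1-m_s)^2\to 1$, proving (iii). The only mildly delicate point is the truncation in the subadditivity gluing, which requires simultaneously controlling the $L^1$ tails of $\alpha_i'$ and the convergence $\beta_i\to 1$ so that the concatenated competitor is admissible with negligible extra energy; everything else reduces to routine algebra.
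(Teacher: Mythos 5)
Your proof is correct, but note that the paper itself does not give a proof of this proposition: it simply cites \cite[Proposition~4.1]{ConFocIur16}, so there is no in-paper argument to compare against line by line. Your argument is the natural self-contained one. The opening pieces (testing with $(0,1)$ for $g(0)=0$, scaling $\alpha$ for monotonicity, truncation-and-concatenation for subadditivity, and the two explicit competitors for the upper bounds) are all standard and correct; in particular the competitor with plateau $\bar\beta$, width $T$, and equipartition tails does optimize to $(1-\bar\beta)f(\bar\beta)s+(1-\bar\beta)^2$, which gives $g(s)\leq\stress s$ upon $\bar\beta\to1^-$ via \ref{ass:f2}. The parametrized AM--GM lower bound
\[
\G(\alpha,\beta)\geq\sqrt{\lambda}\,F(m)\,s+\sqrt{1-\lambda}\,(1-m)^2,\qquad F(s)\defeq(1-s)f(s),\ m\defeq\inf_\R\beta,
\]
is valid (the first term uses $F(\beta)\geq F(m)$ from \ref{ass:f3} together with $|\int\alpha'|=s$, the second is just the total variation of $-(1-\beta)^2/2$ over a path $1\to m\to1$), and combining it at $\lambda=0,1$ with near-optimal pairs whose excess tends to zero at the right rate ($o(s)$ as $s\to0^+$, $o(1)$ as $s\to\infty$) does yield (iii) and (iv). Two small points you may want to make explicit in a final write-up: (a) in the subadditivity gluing the truncated $\tilde\alpha_i$ satisfy $|\int\tilde\alpha_i'|=s_i-\delta_i$ with $\delta_i$ small rather than exactly $s_i$, so one needs a final small rescaling of $\alpha$ (harmless, since it only perturbs the elastic term by a factor close to $1$) before concluding $g(s_1+s_2)\leq g(s_1)+g(s_2)$; (b) in (iv) the near-optimal pair must be chosen with excess $\e_s=o(s)$ before dividing by $s$, as you implicitly do. Neither is a gap, just bookkeeping.
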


We studied in \cite{BonConIur21} the existence and properties of optimal pairs for the minimum problem \eqref{def:g} which defines $g$, which we collect in the following two propositions. Notice that the first part of the statement of Proposition~\ref{prop:g2a} does not make use of the convexity assumption \ref{ass:f6}, whereas this condition is crucial to deduce uniqueness of the optimal pair and the properties of the function $\mbar_{s}$ in Proposition~\ref{prop:g2b} (compare with Remark~\ref{rmk:uniqueness}).

\begin{proposition}\label{prop:g2a}
Assume that $f$ satisfies the assumptions \ref{ass:f1}, \ref{ass:f2}, \ref{ass:f3}. Let $g$ be defined by \eqref{def:g} and let
\begin{equation} \label{def:sfrac}
\sfrac\defeq \sup\{s \,:\, g(s)<1 \} \in(0,+\infty].
\end{equation}
Let $s\in(0,\sfrac)$, so that $g(s)<1$. Then:
\begin{enumerate}
\item There exists an optimal pair $(\alpha_s,\beta_s)\in\mathcal{U}_s$ such that $g(s)=\G(\alpha_s,\beta_s)$.
\item If $(\alpha_s,\beta_s)$ is an optimal pair for $g(s)$, then there exist $-\infty\leq T_-<T_*<T_+\leq+\infty$ such that $\{\beta_s<1\}=(T_-,T_+)$, $\beta_s\in\mathrm{C}^1(T_-,T_+)$, $\beta_s$ is symmetric with respect to $T_*$ and nonincreasing in $(-\infty,T_*)$, $\alpha_s\in\mathrm{C}^1(\R)$ is nondecreasing.
\item Any optimal pair $(\alpha_s,\beta_s)$ solves the Euler-Lagrange equations
\begin{equation} \label{eq:opt-prof-1}
\beta_s'' = f(\beta_s)f'(\beta_s)(\alpha_s')^2 - \frac{1-\beta_s}{4}  \qquad\text{weakly in }\{\beta_s<1\},
\end{equation}
\begin{equation} \label{eq:opt-prof-2}
f^2(\beta_s)\alpha_s' = \sigma_s \qquad\text{pointwise on }\{\beta_s<1\}, \text{ for a constant }\sigma_s\in\R.
\end{equation}
\item Any optimal pair $(\alpha_s,\beta_s)$ satisfies pointwise on $\{\beta_s<1\}$
\begin{equation} \label{eq:opt-prof-4}
f^2(\beta_s)|\alpha_s'|^2 + |\beta_s'|^2 - \frac{(1-\beta_s)^2}{4} =0.
\end{equation}
\item Assuming in addition that $f$ satisfies \ref{ass:f6}, the optimal pair is unique up to translations, in the sense that if $(\alpha_s,\beta_s)$ and $(\hat\alpha_s,\hat\beta_s)$ are minimizers then there are $a_1, t_1\in\R$ such that $\alpha_s(t)=a_1+\hat\alpha_s(t-t_1)$, $\beta_s(t)=\hat\beta_s(t-t_1)$.
\end{enumerate}
If instead $s\geq s_{\mathrm{frac}}$, one has $g(s)=1$. In particular $g(s)=\G(0,\beta_s)$, where $\beta_s(t)=1-e^{-\frac{|t|}{2}}$, that is, one can interpret the value $g(s)=1$ as the energy of a configuration $(\alpha_s,\beta_s)\notin\mathcal{U}_s$ where $\alpha_s$ is piecewise constant.
\end{proposition}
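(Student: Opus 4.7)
The plan is to prove the assertions in order, handling separately the regimes $s<\sfrac$ (where the infimum is attained) and $s\geq\sfrac$ (where only $g(s)=1$ has to be identified). For the existence claim (i), I would apply the direct method to a minimizing sequence $(\alpha_n,\beta_n)\in\mathcal{U}_s$. Without loss of generality $\alpha_n$ is nondecreasing, since the energy depends on $\alpha_n$ only through $|\alpha_n'|^2$ and replacing $\alpha_n'$ by $|\alpha_n'|$ preserves both the integral constraint and the value of $\mathcal{G}$. Uniform bounds on $\int|\beta_n'|^2$ and $\int(1-\beta_n)^2$ yield, after a translation that pins a minimum of $\beta_n$ at the origin, local $H^1$-compactness and a locally uniform limit $\beta_s$ with $\beta_s(\pm\infty)=1$. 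The delicate point is to keep track of the mass of $\alpha_n'$: since $g(s)<1$ forces $\beta_n$ to stay uniformly away from $1$ on the region carrying most of $\alpha_n'$, and since $f(\beta_n)\to+\infty$ as $\beta_n\to 1$, this mass cannot escape into $\{\beta_n\approx 1\}$; a tightness argument then yields $\alpha_n'\rightharpoonup \alpha_s'$ weakly in $L^1_{\loc}$ with $\int\alpha_s'=s$, and lower semicontinuity of $\mathcal{G}$ produces the optimal pair.

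With an optimal pair in hand, the Euler--Lagrange system (iii) is standard. Varying $\alpha_s$ with compactly supported $H^1$ test functions of zero integral (to preserve the mass constraint) gives a Lagrange multiplier $\sigma_s$ and the equation $f^2(\beta_s)\alpha_s'=\sigma_s$ weakly, upgraded to the pointwise identity on $\{\beta_s<1\}$ by regularity. Varying $\beta_s$ by test functions compactly supported inside $\{\beta_s<1\}$ yields the second-order ODE \eqref{eq:opt-prof-1}. Statement (iv) then follows by multiplying that ODE by $\beta_s'$ and using $(\alpha_s')^2=\sigma_s^2/f^4(\beta_s)$ to rewrite $f(\beta_s)f'(\beta_s)(\alpha_s')^2\beta_s' =\tfrac{d}{dt}\bigl(-\sigma_s^2/(2f^2(\beta_s))\bigr)$, so the equation becomes an exact derivative; the integration constant vanishes by the boundary conditions $\beta_s\to 1$, $\beta_s'\to 0$ at the endpoints of $\{\beta_s<1\}$ (using $f(\beta)\to+\infty$ there).

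For the structure (ii), I would combine the first integral (iv) with the ODE analysis of Section~\ref{sect:ODE}: on each connected component of $\{\beta_s<1\}$ the pair $(\beta_s,\beta_s')$ lies on the level set corresponding to the separatrix of case (ii) in Theorem~\ref{thm:ODE} (applied with $\alpha=|\sigma_s|/2$), giving a single symmetric transition connecting $\beta=1$ at the two endpoints. A rearrangement/truncation argument then rules out multiple components: concentrating the mass of $\alpha_s'$ on a single component saves the strictly positive Modica--Mortola-type cost $\int(1-\beta_s)^2/4+|\beta_s'|^2$ incurred on any additional component. For uniqueness up to translations (v) under the convexity hypothesis \ref{ass:f6}, the idea is to reparametrize by $\tau=\alpha_s(t)\in(0,s)$, view $\beta_s$ as a function of $\tau$, and then pass to the variable $u=(1-\beta_s(\tau))^2$; the reduced energy becomes a strictly convex functional of $u$, thanks precisely to the convexity of $s\mapsto\sqrt{s}f(1-\sqrt{s})$, which forces uniqueness modulo $t$-translations. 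Finally, for $s\geq\sfrac$ the explicit profile $\beta_s(t)=1-e^{-|t|/2}$ together with $\alpha\equiv 0$ solves the reduced equation $\beta''=-(1-\beta)/4$, and a direct computation gives $\mathcal{G}(0,\beta_s)=\tfrac12+\tfrac12=1$, matching the bound $g(s)\leq 1$ from Proposition~\ref{prop:g1}. The main obstacle is the tightness step in (i), which requires careful use of the constraint $g(s)<1$ to prevent escape of $\alpha_n'$-mass into the region where $f(\beta_n)\to+\infty$; the remaining steps are systematic deployments of the Euler--Lagrange calculus and the ODE toolbox of Section~\ref{sect:ODE}.
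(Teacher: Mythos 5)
The paper does not reprove this proposition: its proof is a single citation to \cite{BonConIur21} (Propositions~8.1 and~8.3 there), so there is no in-paper argument to compare yours against in detail. Your overall architecture --- direct method for (i), Lagrange-multiplier Euler--Lagrange calculus for (iii), a first integral obtained by multiplying by $\beta_s'$ and rewriting $f(\beta_s)f'(\beta_s)(\alpha_s')^2\beta_s'$ as $-\frac{d}{dt}(\sigma_s^2/2f^2(\beta_s))$ for (iv), ODE phase-plane analysis for (ii), a convexity argument driven by \ref{ass:f6} for (v), and the explicit computation $\G(0,1-e^{-|t|/2})=\tfrac12+\tfrac12=1$ for $s\geq\sfrac$ --- is the natural one and presumably mirrors the cited reference.

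However, the compactness step in (i) contains a genuine gap. The assertion that ``since $f(\beta_n)\to+\infty$ as $\beta_n\to1$, the mass cannot escape into $\{\beta_n\approx1\}$'' fails as stated: smearing a mass $s'$ of $\alpha_n'$ uniformly over an interval of length $2L$ on which $\beta_n\equiv1-\delta$ costs approximately $\tfrac{f^2(1-\delta)(s')^2}{2L}+\tfrac{\delta^2 L}{2}$, and optimizing over $L$ gives $f(1-\delta)\,\delta\,s'\to\stress\, s'$ as $\delta\to0$ by \ref{ass:f2}, the transition layers adding only $O(\delta^2)$. Mass of $\alpha_n'$ can therefore escape at the finite per-unit price $\stress$, which is precisely the Lipschitz constant of $g$ in Proposition~\ref{prop:g1}; so this rough estimate only yields $g(s)\geq g(s')+\stress(s-s')-o(1)$ and does not contradict $g(s)<1$. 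Closing the gap requires a sharper mechanism (a strict-subadditivity or one-dimensional dichotomy argument), and one must be careful about circularity: strict concavity of $g$ cannot be invoked here, since it is derived in Propositions~\ref{prop:g2b}--\ref{prop:g3}, which rest on the present proposition. Your rearrangement step for connectedness of $\{\beta_s<1\}$ in (ii) has the same weakness (subadditivity gives $g(s)\leq kg(s/k)$, not the needed strict inequality). Also a small slip: matching the Euler--Lagrange ODE to \eqref{eq:ODE1} yields $\alpha=|\sigma_s|$, not $|\sigma_s|/2$, since $(2\alpha)^2$ in \eqref{eq:ODE1} corresponds to $4\sigma_s^2=(2\sigma_s)^2$ produced by \eqref{eq:opt-prof-2}.
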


\begin{proof}
See the proof of \cite[Proposition~8.1]{BonConIur21}, which only uses \ref{ass:f1}-\ref{ass:f3}, and \cite[Proposition~8.3]{BonConIur21} for the uniqueness part (v). We only sketch here the proof of (i), since some details were missing in \cite[Proposition~8.1(ii)]{BonConIur21}. The existence of an optimal pair $(\alpha_s,\beta_s)$ can be obtained in two steps: in the first, we fix $\beta$ with $(1-\beta)\in H^1(\R)$, $0\leq\beta\leq1$, and $\lim_{|t|\to+\infty}\beta(t)=1$, and we minimize $\G(\cdot,\beta)$ with respect to the first variable; in the second step, we minimize $\G(\alpha,\beta)$ with respect to $\beta$, where $\alpha$ has been obtained in the first step and depends on $\beta$.

To prove the existence of a minimizing profile $\alpha$ (for fixed $\beta$) in the first step, we can assume that $\inf_{t\in\R}\beta(t)>0$, since otherwise $\G(\alpha,\beta)\geq1$ for all $\alpha$. Then, we check that the crucial property $1/f^2(\beta)\in L^1(\R)$ holds. Indeed, by \ref{ass:f2} there exists $K>0$ such that $f(\beta)(1-\beta)\geq\frac{\stress}{2}$ in $\R\setminus[-K,K]$, where we adopt the convention $f(\beta)(1-\beta)=\sigma_c$ if $\beta=1$. Then
$$
\int_{\R}\frac{\dd t}{f^2(\beta(t))} \leq \int_{[-K,K]}\frac{\dd t}{f^2(\beta(t))} + \frac{4}{\stress^2}\int_{\R\setminus[-K,K]}(1-\beta(t))^2\dd t < +\infty.
$$
We define
$$
\alpha(t) \defeq c\int_0^t \frac{\dd\tau}{f^2(\beta(\tau))} \qquad\text{where } c\defeq s\biggl(\int_{\R}\frac{1}{f^2(\beta)}\dd \tau\biggr)^{-1}.
$$
In this way $(\alpha,\beta)\in\mathcal{U}_s$ and $\alpha$ minimizes $\G(\cdot,\beta)$ for fixed $\beta$, by convexity of the energy.

Next, as a second step we minimize in $\beta$. Notice that, in the minimization problem \eqref{def:g}, we can restrict to profiles such that $\beta$ is nonincreasing in $(-\infty,0)$ and nondecreasing in $(0,+\infty)$, by exploiting the same argument as in \cite[Proposition~8.1~(v)]{BonConIur21}. We define
\begin{equation} \label{def:Gh}
\Gh_s(\beta) \defeq s^2\biggl(\int_{-\infty}^{+\infty}\frac{1}{f^2(\beta(t))}\dd t\biggr)^{-1} + \int_{-\infty}^{+\infty} \biggl( \frac{(1-\beta)^2}{4} + |\beta'|^2 \biggr) \dd t,
\end{equation}
so  that $\Gh_s(\beta)=\G(\alpha_\beta,\beta)$, where $\alpha_\beta$ is the optimal profile associated to a given function $\beta$, as constructed in the previous step. In this way
\begin{equation} \label{new-proof0}
g(s) = \inf_{\beta}\Gh_s(\beta)
\end{equation}
where the infimum is taken over all profiles $\beta$ such that $1-\beta\in H^1(\R)$ and $0\leq\beta\leq1$. Let $(\beta_k)_k$ be a minimizing sequence for $\Gh_s$, that is $g(s)=\lim_{k}\Gh_s(\beta_k)$.  The sequence $1-\beta_k$ is bounded in $H^1(\R)$ and has therefore a subsequence (not relabeled) converging weakly and uniformly on compact sets to some function $1-\beta\in H^1(\R)$, with $\min_{\R}\beta=\beta(0)\in(0,1)$ (or else $\beta(0)=0$ would imply $g(s)\geq 1$, while $\beta(0)=1$ would give $g(s)=0$, hence $s=0$).

We let
\begin{equation*}
    \mu \defeq \limsup_{k\to+\infty}\int_{-\infty}^{\infty}(1-\beta_k)^2\dd t - \int_{-\infty}^{\infty}(1-\beta)^2\dd t \geq0,
\end{equation*}
and we also observe that
\begin{equation} \label{new-proof2}
    \limsup_{k\to\infty}\int_{\R}\frac{\dd t}{f^2(\beta_k)} \leq \int_{\R}\frac{\dd t}{f^2(\beta)} + \frac{\mu}{\stress^2}.
\end{equation}
Indeed, given $\e>0$ we can choose $T>0$ so large that $(1-\beta(\pm T))f(\beta(\pm T))>\stress -\e$ (using assumption \ref{ass:f2}).
{Then also $(1-\beta_k(\pm T))f(\beta_k(\pm T))\geq \stress - \e$ for all $k$ sufficiently large; in turn, by monotonicity of $\beta_k$ and of the map $(1-s)f(s)$, we also have $(1-\beta_k(t))f(\beta_k(t))\geq \stress - \e$ for all $|t|\geq T$ and for all $k$ sufficiently large.} By using the uniform convergence $\beta_k\to\beta$ on $[-T,T]$ we have
\begin{align*}
    \limsup_{k\to\infty}\int_{\R}\frac{\dd t}{f^2(\beta_k)} 
    & \leq \int_{-T}^{T}\frac{\dd t}{f^2(\beta)} + \limsup_{k\to\infty}\int_{\R\setminus[-T,T]}\frac{(1-\beta_k)^2}{(1-\beta_k)^2f^2(\beta_k)}\dd t \\
    & \leq \int_{-T}^{T}\frac{\dd t}{f^2(\beta)} + \limsup_{k\to\infty} \frac{1}{(\stress-\e)^2}\int_{\R\setminus[-T,T]}(1-\beta_k)^2\dd t \\
    & = \int_{-T}^{T}\frac{\dd t}{f^2(\beta)} + \frac{1}{(\stress-\e)^2}\int_{\R\setminus[-T,T]}(1-\beta)^2\dd t + \frac{\mu}{(\stress-\e)^2}\,,
\end{align*}
and by passing to the limit first as $T\to+\infty$ and then as $\e\to0$ we obtain \eqref{new-proof2}.

Then by definition of $\mu$ and by \eqref{new-proof2}
\begin{align} \label{new-proof1}
    g(s) & = \lim_{k\to+\infty}\Gh_s(\beta_k)
    \geq s^2\biggl(\frac{\mu}{\stress^2} + \int_{\R}\frac{\dd t}{f^2(\beta)}\biggr)^{-1} + \frac{\mu}{4} + \int_{-\infty}^{+\infty} \biggl( \frac{(1-\beta)^2}{4} + |\beta'|^2 \biggr) \dd t.
\end{align}
If $\mu=0$, then the previous inequality shows that $g(s)\geq\Gh_s(\beta)$, and therefore $\beta$ is a minimizer in \eqref{new-proof0} and the proof is concluded. If instead $\mu>0$, we modify the profile $\beta$ as follows: define
$$
\hat{\beta}(t)\defeq
\begin{cases}
    \beta(0) & \text{if }|t|\leq L, \\
    \beta(t-L) & \text{if }t> L, \\
    \beta(t+L) & \text{if }t<-L, \\
\end{cases}
$$
where $L>0$ is chosen so that $2L(1-\beta(0))^2=\mu$. With this choice
\begin{align*}
    \Gh_s(\hat{\beta})
    &= s^2\biggl( \frac{2L}{f^2(\beta(0))} + \int_{\R}\frac{\dd t}{f^2(\beta)} \biggr)^{-1} + 2L\,\frac{(1-\beta(0))^2}{4} + \int_{-\infty}^{+\infty} \biggl( \frac{(1-\beta)^2}{4} + |\beta'|^2 \biggr) \dd t \\
    & = s^2\biggl( \frac{\mu}{(1-\beta(0))^2f^2(\beta(0))} + \int_{\R}\frac{\dd t}{f^2(\beta)} \biggr)^{-1} + \frac{\mu}{4} + \int_{-\infty}^{+\infty} \biggl( \frac{(1-\beta)^2}{4} + |\beta'|^2 \biggr) \dd t \\
    & \leq s^2\biggl( \frac{\mu}{\stress^2} + \int_{\R}\frac{\dd t}{f^2(\beta)} \biggr)^{-1} + \frac{\mu}{4} + \int_{-\infty}^{+\infty} \biggl( \frac{(1-\beta)^2}{4} + |\beta'|^2 \biggr) \dd t
    \xupref{new-proof1}{\leq} g(s)
\end{align*}
where in the first inequality we used the fact that $(1-\beta(0))f(\beta(0))\leq \stress$ by assumption \ref{ass:f3}. This shows that $\hat{\beta}$ is a minimizer in \eqref{new-proof0} and concludes the proof also in the case $\mu>0$.
\end{proof}

\begin{proposition}\label{prop:g2b}
Assume that $f$ satisfies the assumptions \ref{ass:f1}--\ref{ass:f6}. For $s\in(0,\sfrac)$ let $(\alpha_s,\beta_s)$ be the optimal pair for $g(s)$ given by Proposition~\ref{prop:g2a}, translated so that $\beta_s$ attains its minimum at the origin; for $s\geq\sfrac$ let $\beta_s(t)=1-e^{-\frac{|t|}{2}}$. Define 
\begin{equation} \label{minbeta1}
\mbar_s \defeq \min_{t\in\R}\beta_s(t) = \beta_s(0)
\end{equation}
(uniquely determined by $s$). The map $s\mapsto\mbar_s$ is continuous, strictly decreasing in $[0,s_{\mathrm{frac}})$, with $\mbar_0=1$ and $\mbar_s=0$ for $s\geq s_{\mathrm{frac}}$. 

Moreover, one has that $\{\beta_s<1\}=\R$, and the constant $\sigma_s$ in \eqref{eq:opt-prof-2} is given by
\begin{equation} \label{eq:ms-1}
\sigma_s = \frac12 (1-\mbar_s)f(\mbar_s).
\end{equation}
Denoting by $m\mapsto s(m)$ the inverse of the map $s\mapsto \mbar_s$, one has for all $m\in(0,1)$ 
\begin{equation} \label{eq:ms-3}
	s(m) = 2(1-m)f(m)\int_{m}^1 \frac{1}{f(t)\bigl((1-t)^2f^2(t) - (1-m)^2f^2(m)\bigr)^{1/2}} \dd t,
\end{equation}	
\begin{equation} \label{eq:ms-2}
g(s(m)) = 2\int_{m}^1 \frac{(1-t)^2f(t)}{\bigl((1-t)^2f^2(t) - (1-m)^2f^2(m)\bigr)^{1/2}} \dd t.
\end{equation}
\end{proposition}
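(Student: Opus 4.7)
The strategy is to identify each optimal pair $(\alpha_s,\beta_s)$ with $s \in (0, \sfrac)$ with a solution to the Cauchy problem studied in Section~\ref{sect:ODE}. Substituting the algebraic relation \eqref{eq:opt-prof-2}, $\alpha_s' = \sigma_s/f^2(\beta_s)$, into the Euler--Lagrange equation \eqref{eq:opt-prof-1} shows that $\beta_s$ satisfies on $\{\beta_s<1\}$ the ODE \eqref{eq:ODE1} with parameter $\alpha$ replaced by $\sigma_s$. After translating so that the unique minimum point of $\beta_s$ is at the origin (cf.\ Proposition~\ref{prop:g2a}(ii)), the Cauchy data become $\beta_s(0) = \bar{m}_s$, $\beta_s'(0) = 0$, and $\beta_s$ is the unique solution of the Cauchy problem associated to the pair $(\sigma_s, \bar{m}_s)$.

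The first step is to prove \eqref{eq:ms-1}. Evaluating the first integral \eqref{eq:opt-prof-4} at $t = 0$ gives $\sigma_s^2/f^2(\bar{m}_s) = (1-\bar{m}_s)^2/4$, so that $\sigma_s = \frac{1}{2}(1-\bar{m}_s)f(\bar{m}_s)$ (the positive root, since $\alpha_s$ is nondecreasing). In particular $(1-\bar{m}_s)f(\bar{m}_s) = 2\sigma_s$, which places $\beta_s$ in case \ref{item:ODE2} of Theorem~\ref{thm:ODE}: thus $\beta_s(t) < 1$ for every $t$, $\beta_s$ is strictly increasing on $(0, +\infty)$, and $\beta_s(t) \to 1$ as $t \to +\infty$. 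Combined with symmetry, this yields $\{\beta_s < 1\} = \R$.

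The integral formulas \eqref{eq:ms-3} and \eqref{eq:ms-2} follow by a change of variables. Combining \eqref{eq:opt-prof-4} and \eqref{eq:opt-prof-2} one obtains, for $t > 0$,
\begin{equation*}
\beta_s'(t) = \frac{1}{2f(\beta_s(t))}\sqrt{(1-\beta_s(t))^2 f^2(\beta_s(t)) - (1-\bar{m}_s)^2 f^2(\bar{m}_s)}\,.
\end{equation*}
Since the integrand of $\G(\alpha_s,\beta_s)$ simplifies via \eqref{eq:opt-prof-4} to $(1-\beta_s)^2/2$, using symmetry one rewrites
\begin{equation*}
s = 2 \int_0^{+\infty} \frac{\sigma_s}{f^2(\beta_s)} \dd t, \qquad g(s) = \int_0^{+\infty} (1-\beta_s)^2 \dd t,
\end{equation*}
and the substitution $y = \beta_s(t)$, $\dd t = \dd y/\beta_s'(t)$, transforms these into \eqref{eq:ms-3} and \eqref{eq:ms-2}. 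Both improper integrals are convergent because the integrand has an integrable $1/\sqrt{y-\bar{m}_s}$ singularity at the lower endpoint (as can be checked from the expansion of $(1-y)^2 f^2(y) - (1-\bar{m}_s)^2 f^2(\bar{m}_s)$ near $y = \bar{m}_s$) and decays appropriately as $y \to 1^-$.

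To establish the asserted properties of $s \mapsto \bar{m}_s$, I would read \eqref{eq:ms-3} as defining a map $m \mapsto s(m)$ on $(0,1)$. Continuity of this map follows from dominated convergence applied to the integral. Injectivity comes from the uniqueness statement in Proposition~\ref{prop:g2a}(v): if two values $m_1 \neq m_2$ produced the same $s$, the corresponding Cauchy solutions would generate two non-equivalent optimal pairs for that $s$, a contradiction. A continuous injective function on an interval is strictly monotone, and the direction and range are pinned down by the boundary behavior: as $m \to 1^-$, the integration interval in \eqref{eq:ms-3} shrinks and $s(m) \to 0$; as $m \to 0^+$, \eqref{eq:ms-2} gives $g(s(m)) \to 1$, which by continuity of $g$ and the definition of $\sfrac$ forces $s(m) \to \sfrac$. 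Thus $m \mapsto s(m)$ is a continuous strictly decreasing bijection from $(0,1)$ onto $(0,\sfrac)$, whose inverse is the continuous strictly decreasing map $s \mapsto \bar{m}_s$, with $\bar{m}_0 = 1$ and $\bar{m}_s \to 0$ as $s \to \sfrac^-$; the case $s \geq \sfrac$ is immediate from the explicit formula $\beta_s(t) = 1 - e^{-|t|/2}$. The main technical delicacy lies in rigorously justifying the change of variables in the presence of the vanishing square root at $y = \bar{m}_s$ and in carrying out the asymptotic analysis of the boundary limits uniformly in $m$, both of which reduce to careful but routine estimates.
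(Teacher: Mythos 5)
Your derivations of \eqref{eq:ms-1}, of $\{\beta_s<1\}=\R$ via case~\ref{item:ODE2} of Theorem~\ref{thm:ODE}, and of \eqref{eq:ms-3}--\eqref{eq:ms-2} by substitution into the first integral are exactly the paper's route. The difference is in how you handle the qualitative properties of $s\mapsto\mbar_s$: the paper simply quotes \cite[Proposition~8.3]{BonConIur21} for continuity and strict monotonicity, whereas you attempt a self-contained argument starting from the integral formula for $s(m)$.

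Your injectivity step has a genuine gap. You write that if $m_1\neq m_2$ gave $s(m_1)=s(m_2)=:s$ then ``the corresponding Cauchy solutions would generate two non-equivalent optimal pairs for that $s$,'' invoking the uniqueness in Proposition~\ref{prop:g2a}(v). But Proposition~\ref{prop:g2a}(v) concerns uniqueness of \emph{minimizers}, not of critical points. For a generic $m\in(0,1)$, the solution of the Cauchy problem \eqref{eq:ODE1}--\eqref{eq:ODE3} with $\alpha=\frac12(1-m)f(m)$, together with the associated $\alpha'=\sigma/f^2(\beta)$, does land in $\mathcal{U}_{s(m)}$ and satisfies the Euler--Lagrange equations, but nothing in the argument shows it achieves the infimum $g(s(m))$. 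Solving the necessary conditions does not, by itself, produce an ``optimal pair,'' so the uniqueness statement cannot be applied to conclude $m_1=m_2$. The same gap makes the boundary-limit step circular: to interpret $g(s(m))\to1$ as forcing $s(m)\to\sfrac^-$, you need $s(m)\in(0,\sfrac)$, i.e.\ you need to already know that $m$ is in the range of $s\mapsto\mbar_s$. A self-contained proof would need either to show directly that the integral \eqref{eq:ms-3} is a strictly monotone function of $m$ (the route taken in \cite[Proposition~8.3]{BonConIur21}, and where the convexity assumption~\ref{ass:f6} is crucial), or to establish separately that the distinguished Cauchy solution is in fact a minimizer.
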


\begin{proof}
The first part of the statement concerning the properties of the map $\mbar_s$ is proved in \cite[Proposition~8.3]{BonConIur21} without the hypotheses \ref{ass:f4}-\ref{ass:f5}. By substituting \eqref{eq:opt-prof-2} into \eqref{eq:opt-prof-4} and evaluating the resulting equation at the origin, recalling that $\beta_s(0)=\mbar_s$ and $\beta_s'(0)=0$, we find \eqref{eq:ms-1}. Inserting \eqref{eq:opt-prof-2} into \eqref{eq:opt-prof-1} we find that $\beta_s$ solves in the set $\{\beta_s<1\}$ the Cauchy problem \eqref{eq:ODE1}--\eqref{eq:ODE3} for the values of the parameters $\alpha=\sigma_s$ and $m=\mbar_{s}$. By \eqref{eq:ms-1} we can apply case \ref{item:ODE2} in Theorem~\ref{thm:ODE} to deduce that $\beta_s<1$ in $\R$ and $\beta_s'>0$ in $(0,+\infty)$.

Finally, we prove \eqref{eq:ms-3} and \eqref{eq:ms-2}. We have by a change of variable
\begin{equation*}
	\begin{split}
		s
		& = \int_{\R} \alpha_s'\dd t
		\xupref{eq:opt-prof-2}{=} 2\int_{0}^{+\infty} \frac{\sigma_s}{f^2(\beta_s)}\dd t
		\xupref{eq:opt-prof-4}{=} 2\sigma_s \int_0^{+\infty} \frac{\beta_s'}{f^2(\beta_s)} \biggl(\frac{(1-\beta_s)^2}{4}-\frac{\sigma_s^2}{f^2(\beta_s)}\biggr)^{-\frac12} \dd t\\
		& = 2\sigma_s \int_{\mbar_s}^{1} \frac{1}{f^2(t)} \biggl(\frac{(1-t)^2}{4}-\frac{\sigma_s^2}{f^2(t)}\biggr)^{-\frac12} \dd t 
		= 4\sigma_s \int_{\mbar_s}^{1} \frac{\dd t}{f(t)\bigl[(1-t)^2f^2(t)-4\sigma_s^2\bigr]^{1/2}}
	\end{split}
\end{equation*}
from which we obtain \eqref{eq:ms-3} thanks to \eqref{eq:ms-1}. Similarly
\begin{equation*}
	\begin{split}
		g(s)
		& = \G(\alpha_s,\beta_s)
		\xupref{eq:opt-prof-4}{=} 2\int_{-\infty}^{+\infty}\frac{(1-\beta_s)^2}{4} \dd t
		= \int_{0}^{+\infty}(1-\beta_s)^2\dd t \\
		& \xupref{eq:opt-prof-4}{=} \int_0^{+\infty} (1-\beta_s)^2 \beta_s' \biggl(\frac{(1-\beta_s)^2}{4}-\frac{\sigma_s^2}{f^2(\beta_s)}\biggr)^{-\frac12} \dd t \\
		& = \int_{\mbar_s}^{1} (1-t)^2\biggl(\frac{(1-t)^2}{4}-\frac{\sigma_s^2}{f^2(t)}\biggr)^{-\frac12} \dd t
		= \int_{\mbar_s}^{1} \frac{2(1-t)^2 f(t)}{\bigl[(1-t)^2f^2(t)-4\sigma_s^2\bigr]^{1/2}}\dd t
	\end{split}
\end{equation*}
from which we obtain \eqref{eq:ms-2} thanks to \eqref{eq:ms-1}.
\end{proof}

\begin{remark} \label{rmk:sm-explicit}
In the prototype case $f(t)=t/(1-t)$, $t\in[0,1)$, equations \eqref{eq:ms-3}-\eqref{eq:ms-2} give explicit formulas for the maps $m\in(0,1)\mapsto s(m)\in(0,\sfrac)$ and $m\in(0,1)\mapsto g(s(m))\in(0,1)$ (notice that in this case $\sfrac=\pi$ by Proposition~\ref{prop:g4} below):
\[s(m)=2\arctan\Big(\frac{\sqrt{1-m^2}}{m}\Big)-2m\log\Big(\frac{1+\sqrt{1-m^2}}{m}\Big),\]
\[g(s(m))=m^2\log\Big(\frac{m}{\sqrt{1-m^2}+1}\Big)+\sqrt{1-m^2}.\]
See also Figure~\ref{fig:ms} for numeric plots obtained using these expressions.
\begin{figure}
	\begin{center}
		\includegraphics[width=7cm]{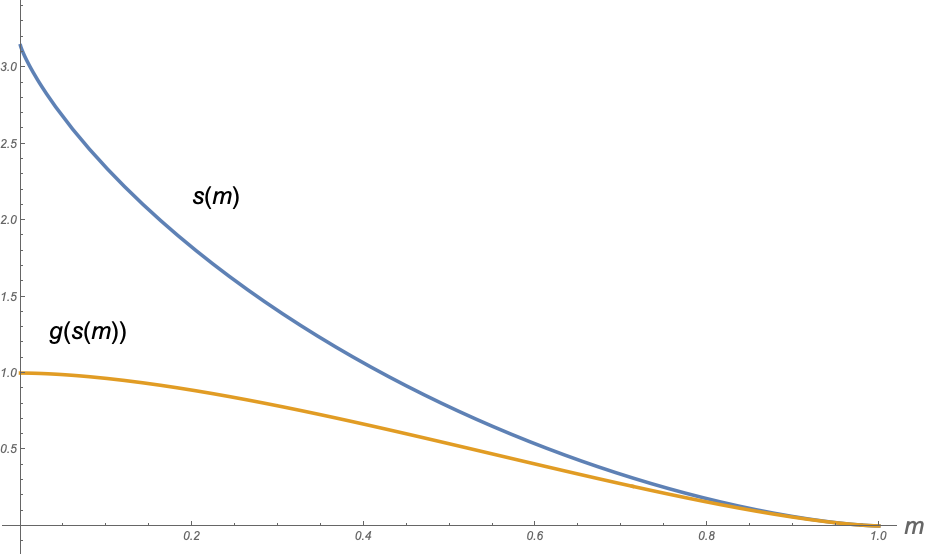} \qquad
		\includegraphics[width=6cm]{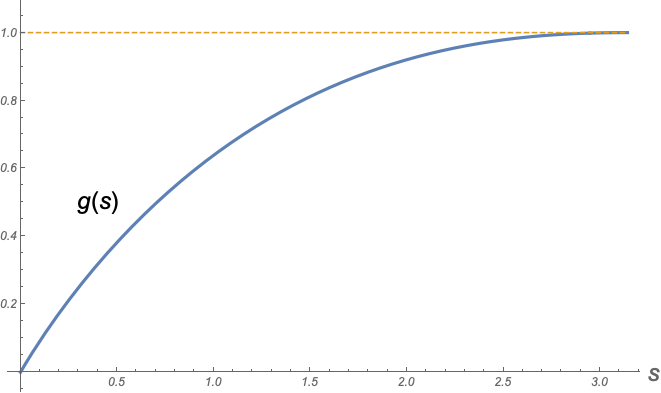}
	\end{center}
	\caption{Left: plots of the functions $s(m)$ and $g(s(m))$ in \eqref{eq:ms-3}--\eqref{eq:ms-2}, in the prototype case $f(t)=t/(1-t)$, see Remark~\ref{rmk:sm-explicit}. Right: plot of the function $g(s)$ obtained by using the expressions in Remark~\ref{rmk:sm-explicit} and inverting numerically the function $s(m)$.}
	\label{fig:ms}
\end{figure}
\end{remark}

\begin{remark} \label{rmk:uniqueness}
If we drop the assumption \ref{ass:f6}, there is numerical evidence (using the function $f_{(q)}$ in \eqref{eq:examplefq} for $q>2$) that the conclusions of Proposition~\ref{prop:g2b} are no longer true in general. However, it seems that it is still possible to define a one-to-one correspondence $s\in (0,\sfrac)\mapsto \mbar_s\in (m_{\text{frac}},1)$, for some $m_{\text{frac}}>0$.
\end{remark}

We next show that $g$ is differentiable and characterize its derivative in terms of the minimum value $\mbar_s$ (see \eqref{minbeta1}) of the optimal profile $\beta_s$.

\begin{proposition} \label{prop:g3}
Under the assumptions \ref{ass:f1}--\ref{ass:f6}, the function $g$ defined in \eqref{def:g} satisfies
\begin{equation} \label{eq:gprime}
g'(s) = (1-\mbar_s)f(\mbar_s) \qquad\text{for all }s\geq 0.
\end{equation}
\end{proposition}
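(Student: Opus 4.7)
My plan is to differentiate $g(s)$ via the chain rule, using the parametric formulas \eqref{eq:ms-3} and \eqref{eq:ms-2} from Proposition~\ref{prop:g2b}. Let $m\mapsto s(m)$ be the inverse of $s\mapsto\mbar_s$, set $S(m)\defeq s(m)$ and $G(m)\defeq g(s(m))$, and introduce the shorthand $\phi(t)\defeq(1-t)f(t)$, so that by \ref{ass:f3} $\phi'>0$ on $(0,1)$. Then the two formulas read
\[
G(m)=2\int_m^1\frac{\phi^2(t)/f(t)}{\sqrt{\phi^2(t)-\phi^2(m)}}\,\dd t,\qquad S(m)=2\phi(m)\int_m^1\frac{\dd t}{f(t)\sqrt{\phi^2(t)-\phi^2(m)}}\,.
\]
The goal is to show $G'(m)=\phi(m)S'(m)$; since $s\mapsto\mbar_s$ is strictly decreasing we have $S'(m)\neq 0$, hence $g'(s(m))=G'(m)/S'(m)=(1-m)f(m)$, which is the claimed formula.

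The algebraic identity
\[
\frac{\phi^2(t)}{f(t)\sqrt{\phi^2(t)-\phi^2(m)}}=\frac{\sqrt{\phi^2(t)-\phi^2(m)}}{f(t)}+\frac{\phi^2(m)}{f(t)\sqrt{\phi^2(t)-\phi^2(m)}}
\]
yields the decomposition
\[
G(m)=\phi(m)\,S(m)+2\int_m^1\frac{\sqrt{\phi^2(t)-\phi^2(m)}}{f(t)}\,\dd t.
\]
Differentiating with respect to $m$: the first summand contributes $\phi'(m)S(m)+\phi(m)S'(m)$. For the second, the integrand vanishes at the lower endpoint $t=m$ (so there is no boundary contribution), and differentiation under the integral gives the term $-\phi(m)\phi'(m)\int_m^1\dd t/(f(t)\sqrt{\phi^2(t)-\phi^2(m)})=-\tfrac12\phi'(m)S(m)/\phi(m)\cdot\phi(m)=-\tfrac12\phi'(m)S(m)$; multiplied by the factor $2$ in front of the integral this is exactly $-\phi'(m)S(m)$. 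The cross terms cancel, leaving $G'(m)=\phi(m)S'(m)$, as desired.

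The endpoints must be treated separately. For $s=0$ the map $s\mapsto\mbar_s$ satisfies $\mbar_0=1$, so $(1-\mbar_s)f(\mbar_s)\to\stress$ as $s\to0^+$ by \ref{ass:f2}; on the other hand Proposition~\ref{prop:g1}(iv) gives $g(s)/s\to\stress$, i.e. $g'(0)=\stress$, and the two values match. For $s\geq\sfrac$ (if $\sfrac<+\infty$) we have $g\equiv 1$, so $g'(s)=0$, which agrees with $\mbar_s=0$ combined with $f(0)=0$ from \ref{ass:f1}. Continuity of $s\mapsto\mbar_s$ from Proposition~\ref{prop:g2b} also gives matching at $s=\sfrac$, so $g\in C^1([0,+\infty))$.

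The main technical obstacle is the justification of differentiation under the integral sign: the original integrand in the second summand vanishes like $\sqrt{t-m}$ at $t=m$ (because $\phi'(m)>0$), whereas its $m$-derivative produces a $1/\sqrt{\phi^2(t)-\phi^2(m)}$ that is only $1/\sqrt{t-m}$--singular. This singularity is integrable, so the Leibniz rule is valid, but to apply dominated convergence rigorously one must split $[m,1]$ into $[m,m+\delta]$ and $[m+\delta,1]$, estimate the near-endpoint contribution using the expansion $\phi^2(t)-\phi^2(m)=2\phi(m)\phi'(m)(t-m)+O((t-m)^2)$, and let $\delta\to 0$ after differentiation. A parallel justification is required for the analogous derivation of $S'(m)$ from \eqref{eq:ms-3}, confirming $S'(m)\neq 0$. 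Once these singular integrals are handled uniformly, the identity $G'(m)=\phi(m)S'(m)$ follows cleanly.
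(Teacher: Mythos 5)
Your approach is genuinely different from the paper's, but it has a circularity gap. The paper does not differentiate any singular integral: it exploits the variational definition of $g$ by testing the minimum problem defining $g(s)$ with the rescaled competitor $(\tfrac{s}{s'}\alpha_{s'},\beta_{s'})$, which immediately yields the two-sided bound
$$
g(s)\le g(s')+(s-s')\,\frac{s+s'}{2s'}\,(1-\mbar_{s'})f(\mbar_{s'}) \qquad\text{for all }s,s'\in(0,\sfrac),
$$
and then reads off the one-sided derivatives from this quadratic-in-$s$ upper bound; differentiability of $g$ and the formula for $g'$ drop out simultaneously, with no analytic machinery. You instead work on the parametric formulas \eqref{eq:ms-3}--\eqref{eq:ms-2}, and the decomposition $G(m)=\phi(m)S(m)+I(m)$ with $I(m)=2\int_m^1 f(t)^{-1}\sqrt{\phi^2(t)-\phi^2(m)}\,\dd t$ is a genuinely nice observation: you verify correctly that $I'(m)=-\phi'(m)S(m)$ after handling the integrable $(t-m)^{-1/2}$ singularity, and the cancellation $G'(m)=\phi(m)S'(m)$ would indeed give the result.

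The gap is in the steps preceding the cancellation. Differentiating the identity $G(m)=\phi(m)S(m)+I(m)$ to extract $\phi'(m)S(m)+\phi(m)S'(m)$ already assumes that $S$ (equivalently $G=g\circ S$, i.e.\ $g$ itself) is differentiable --- but the differentiability of $g$ is precisely what the proposition asserts, so the argument is circular. The claim ``since $s\mapsto\mbar_s$ is strictly decreasing we have $S'(m)\ne 0$'' is also not correct: a strictly monotone function can be non-differentiable, or differentiable with vanishing derivative at some points, so strict monotonicity of $\mbar$ gives neither existence nor nonvanishing of $S'$. Moreover, the ``parallel justification'' you invoke for $S'(m)$ --- differentiating \eqref{eq:ms-3} under the integral sign --- does not work: the $m$-derivative of $(\phi^2(t)-\phi^2(m))^{-1/2}$ is $\phi(m)\phi'(m)\,(\phi^2(t)-\phi^2(m))^{-3/2}\sim C\,(t-m)^{-3/2}$ near $t=m$, which is \emph{not} integrable (unlike the $(t-m)^{-1/2}$ singularity appearing in $I'$). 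So the entire analytic infrastructure --- $S$ differentiable, $S'\neq 0$ --- is missing, and it cannot be supplied by the Leibniz rule applied to \eqref{eq:ms-3}. To make your route rigorous you would need either a second decomposition that tames the $(t-m)^{-3/2}$ singularity in $S'$, or a direct lower Lipschitz bound $|S(m)-S(m_0)|\geq c|m-m_0|$ that lets you pass the $o(m-m_0)$ numerator through the difference quotient; neither is addressed. The paper's competitor argument sidesteps all of this, which is what makes it the cleaner route.
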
 

\begin{proof}
We first observe that the identity \eqref{eq:gprime} is valid for $s=0$ since $g'(0)=\stress$ by Proposition~\ref{prop:g1}, $\mbar_0=1$ by Proposition~\ref{prop:g2b}, and $\lim_{t\to1}(1-t)f(t)=\stress$ by assumption~\ref{ass:f2}. It is also valid for $s>\sfrac$, since $g(s)\equiv1$ and $\mbar_s=0$ for $s\in[\sfrac,+\infty)$.

We then consider the case $s\in(0,\sfrac)$. Let $(\alpha_s,\beta_s)\in\mathcal{U}_s$ be an optimal pair for $g(s)$, according to Proposition~\ref{prop:g2a}. Let $s,s'\in(0,\sfrac)$ be two arbitrary points. Since $(\frac{s}{s'}\alpha_{s'},\beta_{s'})\in\mathcal{U}_s$ is an admissible competitor for the minimum problem \eqref{def:g} defining $g(s)$, we have
\begin{align*}
g(s)
& \leq \G\bigl({\ts\frac{s}{s'}}\,\alpha_{s'},\beta_{s'}\bigr)
= \int_{-\infty}^{+\infty} \biggl( \Bigl(\frac{s}{s'}\Bigr)^2f^2(\beta_{s'})|\alpha'_{s'}|^2 + \frac{(1-\beta_{s'})^2}{4} + |\beta_{s'}'|^2 \biggr) \dd t \\
& = \G(\alpha_{s'},\beta_{s'}) + \biggl(\Bigl(\frac{s}{s'}\Bigr)^2-1\biggr)\int_{-\infty}^{+\infty}f^2(\beta_{s'})|\alpha'_{s'}|^2\dd t \\
& \xupref{eq:opt-prof-2}{=} g(s') + (s-s')\cdot\frac{s+s'}{(s')^2}\cdot\sigma_{s'}\int_{-\infty}^{+\infty}\alpha_{s'}'\dd t \\
& \xupref{eq:ms-1}{=} g(s') + (s-s') \cdot \frac{s+s'}{2s'} \cdot (1-\mbar_{s'})f(\mbar_{s'})
\end{align*}
whence
\begin{equation}\label{proof:gprime-4}
g(s) \leq g(s') + (s-s')\cdot\frac{s+s'}{2s'}\cdot (1-\mbar_{s'})f(\mbar_{s'}) \qquad\text{for all }s,s'\in(0,\sfrac).
\end{equation}
If $s'<s$, by dividing both sides in \eqref{proof:gprime-4} by $(s-s')$ we get
\begin{equation*}
\frac{g(s)-g(s')}{s-s'} \leq \frac{s+s'}{2s'}\cdot (1-\mbar_{s'})f(\mbar_{s'}),
\end{equation*}
and letting $s\to(s')^+$ or $s'\to(s)^-$, using the continuity of $s\mapsto \mbar_s$, we obtain the following inequalities for the right and left derivatives of $g$:
\begin{equation*}
g'_+(s') \leq (1-\mbar_{s'})f(\mbar_{s'}), \qquad g'_-(s) \leq (1-\mbar_s)f(\mbar_s).
\end{equation*}
By arguing similarly for $s'>s$ in \eqref{proof:gprime-4}, we obtain the opposite inequalities, so that \eqref{eq:gprime} follows for all $s\in(0,\sfrac)$.

To conclude the proof, it only remains to show that $g$ is differentiable also at $\sfrac$ and $g'(\sfrac)=0$. This is easily obtained since $\lim_{s\to\sfrac^-}g'(s)=0$ by the first part of the proof and $g\equiv 1$ for $s>s_{\mathrm{frac}}$.
\end{proof}

A condition for the finiteness of the threshold $\sfrac$ is given in \cite[Proposition~6.3]{BonConIur21}. In the following proposition we improve that result by determining explicitly the value of $\sfrac$ in terms of the derivative of $f$ at the origin.

\begin{proposition} \label{prop:sfrac}
Under the assumptions \ref{ass:f1}--\ref{ass:f6}, the threshold $\sfrac$ defined in \eqref{def:sfrac} is given by
\begin{equation} \label{eq:sfrac}
\sfrac=\frac{\pi}{f'(0)}
\end{equation}
(with $\sfrac=+\infty$ if $f'(0)=0$).
\end{proposition}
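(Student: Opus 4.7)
The first step is a reduction: by Proposition~\ref{prop:g2b}, the map $m\mapsto s(m)$ is a continuous strictly decreasing bijection from $(0,1)$ onto $(0,\sfrac)$ (with $s(m)\to 0$ as $m\to 1^-$), so that $\sfrac=\lim_{m\to 0^+}s(m)$. The task is therefore to evaluate this limit directly from the integral representation \eqref{eq:ms-3}. Introducing $\phi(t)\defeq(1-t)f(t)$---which by \ref{ass:f1}--\ref{ass:f3} is a $C^{1}$-diffeomorphism $[0,1)\to[0,\stress)$ with $\phi(0)=0$ and $\phi'(0)=f'(0)$ (since $f(0)=0$)---the quantity to analyse is
\[
s(m)=2\phi(m)\int_{m}^{1}\frac{dt}{f(t)\sqrt{\phi^{2}(t)-\phi^{2}(m)}}.
\]
Writing $\lambda\defeq f'(0)$, the claim \eqref{eq:sfrac} reads $\lim_{m\to 0^{+}}s(m)=\pi/\lambda$, with the convention that the right-hand side equals $+\infty$ when $\lambda=0$.

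My plan is to split the integral at a small fixed threshold $\delta$. On the outer piece $t\in[\delta,1]$, both $f(t)\geq f(\delta)>0$ and $\phi^{2}(t)-\phi^{2}(m)\geq \phi^{2}(\delta)/2$ once $m$ is small enough, so $2\phi(m)\int_{\delta}^{1}\cdots\,dt\leq C(\delta)\phi(m)\to 0$. On the inner piece $t\in[m,\delta]$, in the non-degenerate case $\lambda>0$ I would exploit the expansion $f(t)=\lambda t+o(t)$: given $\varepsilon\in(0,\lambda)$, shrink $\delta$ so that $(\lambda-\varepsilon)t\leq f(t)\leq(\lambda+\varepsilon)t$ and $\lambda-\varepsilon\leq\phi'(t)\leq\lambda+\varepsilon$ on $[0,\delta]$. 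Integrating the second bound gives the same linear control on $\phi(t)$, and the factorisation $\phi^{2}(t)-\phi^{2}(m)=(\phi(t)-\phi(m))(\phi(t)+\phi(m))$ sandwiches the difference between $(\lambda\pm\varepsilon)^{2}(t^{2}-m^{2})$. Combining these estimates, the inner integrand is controlled above and below by constant multiples of $m/(t\sqrt{t^{2}-m^{2}})$; using the elementary identity $\int_{m}^{\delta}m\,dt/(t\sqrt{t^{2}-m^{2}})=\arccos(m/\delta)\to\pi/2$ as $m\to 0^{+}$, one obtains
\[
\frac{\pi(\lambda-\varepsilon)}{(\lambda+\varepsilon)^{2}}\leq\liminf_{m\to 0^{+}}s(m)\leq\limsup_{m\to 0^{+}}s(m)\leq\frac{\pi(\lambda+\varepsilon)}{(\lambda-\varepsilon)^{2}},
\]
and letting $\varepsilon\to 0^{+}$ delivers $\sfrac=\pi/\lambda$.

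The degenerate case $\lambda=0$ requires a separate argument, since the upper side of the sandwich becomes meaningless. Here I would apply the change of variables $u=\phi(t)/\phi(m)$, which recasts the inner integral as $2\int_{1}^{\phi(\delta)/\phi(m)}\frac{1-t(u,m)}{u\,\phi'(t(u,m))\sqrt{u^{2}-1}}\,du$ with $t(u,m)=\phi^{-1}(u\phi(m))$. For any fixed $R>1$ and $m$ small enough, $t(u,m)\to 0$ uniformly for $u\in[1,R]$, whence $(1-t)/\phi'(t)\to 1/\lambda=+\infty$ pointwise; Fatou's lemma then forces $\liminf_{m\to 0^{+}}s(m)\geq 2\int_{1}^{R}(+\infty)\,du=+\infty$, so that $\sfrac=+\infty$. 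The principal technical hurdle is the careful ordering of the three parameters $\varepsilon,\delta,m$ in the non-degenerate case so that the sandwich survives each limit---in particular, verifying that the outer-piece estimate $C(\delta)\phi(m)\to 0$ remains effective when $\delta=\delta_{\varepsilon}$ shrinks with $\varepsilon$, which is automatic because for each choice of $\varepsilon$ the threshold $\delta$ is fixed before $m$ is sent to $0$.
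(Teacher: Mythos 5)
Your argument is correct, and it shares the overall structure of the paper's proof---both reduce $\sfrac$ to $\lim_{m\to0^+}s(m)$ via Proposition~\ref{prop:g2b}, split the integral in \eqref{eq:ms-3} at a fixed small $\delta$, discard the outer piece, and extract the constant $\pi$ from the inner piece via an arctan/arccos-type identity. Where you diverge is in the treatment of the inner piece. The paper factorizes the integrand exactly as $\frac{1}{f(t)}=\frac{\phi'(t)}{\phi(t)}\cdot\frac{1-t}{\phi'(t)}$ (with $\phi(t)=(1-t)f(t)$), observes that $\phi(m)\int_m^\delta \frac{\phi'(t)}{\phi(t)\sqrt{\phi^2(t)-\phi^2(m)}}\,dt=\arctan\bigl(\sqrt{\phi^2(\delta)-\phi^2(m)}/\phi(m)\bigr)\to\frac{\pi}{2}$, and simply pulls out $\sup$ and $\inf$ of $\frac{1-t}{\phi'(t)}$ over $(0,\delta)$; letting $\delta\to0$ then yields $\pi/\phi'(0)=\pi/f'(0)$ with no case distinction, since these $\sup$/$\inf$ diverge automatically when $f'(0)=0$. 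You instead linearize $f$ and $\phi$ via $\lambda t\pm\e t$ and use the elementary arccos identity, which is also valid but forces you to handle the degenerate case $f'(0)=0$ separately (via the change of variable $u=\phi(t)/\phi(m)$ and Fatou). Both routes work; the paper's factorization is slightly cleaner because it unifies the two cases and avoids the $\e$-sandwich. One minor imprecision in your write-up: when $f'(0)=0$ the map $\phi$ is a continuous strictly increasing bijection of $[0,1)$ onto $[0,\stress)$ but not a $C^1$-diffeomorphism (since $\phi'(0)=0$); fortunately only continuity of $\phi^{-1}$ is actually used in your Fatou step, so the argument is unaffected.
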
 

\begin{proof}
For any fixed $\delta\in(0,1)$ we have by \eqref{eq:ms-3}
\begin{equation*} \label{proof:sfrac1}
\begin{split}
\sfrac
& = \lim_{s\to\sfrac^-} 2(1-\mbar_{s})f(\mbar_{s})\int_{\mbar_{s}}^1 \frac{\dd t}{f(t)\bigl[(1-t)^2f^2(t) - (1-\mbar_{s})^2f^2(\mbar_{s})\bigr]^{1/2}} \\
& = \lim_{s\to\sfrac^-} 2(1-\mbar_s)f(\mbar_s) \int_{\mbar_s}^{\delta} \frac{\dd t}{f(t)\bigl[(1-t)^2f^2(t)-(1-\mbar_s)^2f^2(\mbar_s)\bigr]^{1/2}}\,,
\end{split}
\end{equation*}
where in the last equality we have used that the denominator of the integrand is uniformly far from $0$ in $(\delta,1)$ and that $\mbar_{s}\to0$.
Denoting by $\phi(t)=(1-t)f(t)$, which is strictly increasing by assumption~\ref{ass:f3}, we can write the previous identity as
\begin{align*}
\sfrac
& = \lim_{s\to\sfrac^-} 2\phi(\mbar_s) \int_{\mbar_s}^{\delta} \frac{\phi'(t)}{\phi(t)\sqrt{\phi^2(t)-\phi^2(\mbar_s)}}\cdot\frac{(1-t)}{\phi'(t)}\dd t \\
& \leq \lim_{s\to\sfrac^-} 2\phi(\mbar_s) \biggl(\sup_{t\in(\mbar_s,\delta)}\frac{(1-t)}{\phi'(t)}\biggr) \int_{\mbar_s}^{\delta} \frac{\phi'(t)}{\phi(t)\sqrt{\phi^2(t)-\phi^2(\mbar_s)}}\dd t \\
& = \lim_{s\to\sfrac^-} 2\biggl(\sup_{t\in(\mbar_s,\delta)}\frac{(1-t)}{\phi'(t)}\biggr) \arctan\biggl(\frac{\sqrt{\phi^2(\delta)-\phi^2(\mbar_s)}}{\phi(\mbar_s)}\biggr)
= \pi\sup_{t\in(0,\delta)}\frac{(1-t)}{\phi'(t)}
\end{align*}
since $\mbar_s\to0$ as $s\to\sfrac^-$. Arguing similarly we find
\begin{equation*}
\pi\inf_{t\in(0,\delta)}\frac{(1-t)}{\phi'(t)} \leq \sfrac \leq \pi \sup_{t\in(0,\delta)}\frac{(1-t)}{\phi'(t)}.
\end{equation*}
The conclusion follows by letting $\delta\to0^+$.
\end{proof}

\begin{remark}
For the prototype examples in Remark~\ref{rmk:examplef} one has $\sfrac=\frac{\pi}{q\stress}\in\R$ for the functions $f_{(q)}$, and $\sfrac=+\infty$ for the functions $f^{(p)}$.
\end{remark}

We conclude this section by determining the asymptotic expansion of the cohesive energy density $g$ at the origin.

\begin{proposition} \label{prop:g4}
Assume that $f$ satisfies the assumptions \ref{ass:f1}--\ref{ass:f6}, and assume further that
\begin{equation} \label{eq:asympf}
(1-s)f(s)=\stress -\ell (1-s)^q + o((1-s)^q) \qquad\text{as }s\to1^-
\end{equation}
for some $\ell>0$ and $q\in(0,2]$, where $o(t)$ denotes any quantity such that $\lim_{t\to0}\frac{o(t)}{t}=0$.
Then the function $g$ defined in \eqref{def:g} satisfies, for $p\defeq\frac{4+q}{4-q}$ and for some $\tilde\ell>0$,
\begin{equation} \label{eq:asympg}
g(s) = \stress s - \tilde\ell s^p + o(s^p) \qquad\text{as }s\to0^+.
\end{equation}
\end{proposition}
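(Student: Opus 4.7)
The starting point is the integral representation of $s = s(m)$ and $g(s(m))$ in terms of $m = \mbar_s$ from Proposition~\ref{prop:g2b}. Since $\mbar_s \to 1^-$ as $s \to 0^+$, I set $\mu \defeq 1-m$ and, as in the proof of Proposition~\ref{prop:sfrac}, introduce the shorthand $\vphi(t) \defeq (1-t)f(t)$; the goal is to derive asymptotic expansions of both $s(m)$ and $g(s(m))$ as $\mu \to 0^+$ and then invert.

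The change of variable $t = 1-\mu v$, $v \in [0,1]$, combined with the identity $f(t) = \vphi(t)/(1-t)$, converts \eqref{eq:ms-3} and \eqref{eq:ms-2} into
\[
s(m) = 2\vphi(1-\mu)\,\mu^2 \int_0^1 \frac{v\,dv}{\vphi(1-\mu v)\sqrt{\vphi^2(1-\mu v)-\vphi^2(1-\mu)}},
\]
\[
g(s(m)) = 2\mu^2 \int_0^1 \frac{v\,\vphi(1-\mu v)\,dv}{\sqrt{\vphi^2(1-\mu v)-\vphi^2(1-\mu)}}.
\]
The hypothesis \eqref{eq:asympf} yields $\vphi^2(1-\mu v) - \vphi^2(1-\mu) = 2\stress\ell\mu^q(1-v^q)(1+o(1))$ and $\vphi(1-\mu v) \to \stress$; passing to the limit then gives at leading order
\[
s(m) \sim C_1\,\mu^{2-q/2}, \qquad g(s(m)) \sim \stress\,s(m), \qquad C_1 \defeq \sqrt{\tfrac{2}{\stress\ell}}\,A_q,
\]
where $A_q \defeq \int_0^1 v/\sqrt{1-v^q}\,dv$. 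The leading order only reproduces the known $g'(0) = \stress$, so I need the next term.

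To extract the correction, I would subtract the two expressions to obtain
\[
g(s(m)) - \stress s(m) = 2\mu^2\int_0^1 \frac{v\,[\vphi^2(1-\mu v)-\stress\vphi(1-\mu)]}{\vphi(1-\mu v)\sqrt{\vphi^2(1-\mu v)-\vphi^2(1-\mu)}}\,dv.
\]
Expanding the bracket using \eqref{eq:asympf} produces the key cancellation $\vphi^2(1-\mu v) - \stress\vphi(1-\mu) = \stress\ell\mu^q(1-2v^q) + o(\mu^q)$; passing to the limit yields
\[
g(s(m)) - \stress s(m) = \sqrt{\tfrac{2\ell}{\stress}}\,(A_q - 2B_q)\,\mu^{2+q/2}\,(1+o(1)), \qquad B_q \defeq \int_0^1 \frac{v^{1+q}\,dv}{\sqrt{1-v^q}}.
\]
An integration by parts based on $\frac{d}{dv}\sqrt{1-v^q} = -\frac{q}{2}v^{q-1}/\sqrt{1-v^q}$ (applied to $B_q$ with the $v^{q-1}/\sqrt{1-v^q}$ factor as the derivative) gives the identity $B_q = \frac{4}{q+4}A_q$, hence $A_q - 2B_q = -\frac{4-q}{q+4}A_q$, which is strictly negative for $q \in (0,2]$.

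Finally, inverting $s \sim C_1\mu^{2-q/2}$ produces $\mu^{2+q/2} \sim (s/C_1)^{(4+q)/(4-q)} = (s/C_1)^p$, and substituting yields the claimed expansion \eqref{eq:asympg} with the explicit positive constant $\tilde\ell = \frac{4-q}{4+q}\sqrt{2\ell/\stress}\,A_q\,C_1^{-p}$. The principal technical obstacle throughout is the uniform control of the error terms in the integrals: near $v=1$ one must ensure that the $o(\mu^q)$ remainders are dominated by the integrable singularity $1/\sqrt{1-v^q}$; near $v=0$ the vanishing of the numerator must be tracked together with $\vphi(1-\mu v) \to \stress$; and the Taylor remainder in \eqref{eq:asympf} must be shown to be $o(\mu^q)$ uniformly in $v \in [0,1]$. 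These uniform estimates follow from a careful Taylor expansion of $\vphi$ together with the smoothness assumption \ref{ass:f1} and the monotonicity of $\vphi$ ensured by \ref{ass:f3}.
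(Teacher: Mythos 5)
Your proof is correct, and it fills in the argument that the paper deliberately omits: the paper's ``proof'' of Proposition~\ref{prop:g4} is a one-line deferral to \cite[Proposition~8.5]{BonConIur21} (the case $q=1$), which is itself built on the same integral representations \eqref{eq:ms-3}--\eqref{eq:ms-2} from Proposition~\ref{prop:g2b}. So you are following essentially the same route the authors have in mind, with the rescaling $t=1-\mu v$, the leading-order extraction of $C_1 = \sqrt{2/(\stress\ell)}A_q$, the cancellation in $\vphi^2(1-\mu v)-\stress\vphi(1-\mu)=\stress\ell\mu^q(1-2v^q)+o(\mu^q)$, and the integration-by-parts identity $B_q=\tfrac{4}{q+4}A_q$, which I checked and is correct (as is the resulting $\tilde\ell>0$).

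One phrasing to be careful about: the line $\vphi^2(1-\mu v)-\vphi^2(1-\mu)=2\stress\ell\mu^q(1-v^q)(1+o(1))$ is a pointwise-in-$v$ statement and is \emph{not} uniform near $v=1$, where $1-v^q\to0$ but the remainder is only $o(\mu^q)$, not $o(\mu^q(1-v^q))$. You already flag this as the principal technical obstacle, which is the right instinct; the point to actually prove is a lower bound of the form $\Delta(\mu,v)\geq c\,\mu^q(1-v^q)$ uniformly for $\mu$ small, which gives the integrable dominating function $v(1-v^q)^{-1/2}$ and lets dominated convergence carry all the limits through. Establishing that lower bound needs slightly more than \eqref{eq:asympf} alone: either a matching asymptotic for $\vphi'$ (which is what the $C^2$ regularity \ref{ass:f1} together with \ref{ass:f5} is there to supply), or a direct argument using the monotonicity of $\vphi=\,(1-\cdot)f$ from \ref{ass:f3} and the representation $\Delta=\int_{1-\mu}^{1-\mu v}2\vphi\vphi'$. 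Apart from this, the blind proof is a faithful and correct execution of the argument the paper delegates to the reader.
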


\begin{proof}
The proof is a straightforward adaptation of \cite[Proposition~8.5]{BonConIur21}, which deals with the case $q=1$. The details are left to the reader.
\end{proof}

\begin{remark} \label{rmk:asymp}
The function $f_{(q)}$ for $q\in(0,4)$ in Remark~\ref{rmk:examplef} satisfies the condition \eqref{eq:asympf}.
\end{remark}


\section{Preliminary properties of critical points} \label{sect:proof-prelim}

We assume along this section that $(u_\e,v_\e)\in (H^1(0,L))^2$ is a family of critical points of the energies $\Feps$, \textit{i.e.}\ they are weak solutions to the system of equations \eqref{def:cp1}--\eqref{def:cp4}. We also suppose that the Dirichlet boundary condition satisfies \eqref{def:bc}, and that the equiboundedness of the energy \eqref{eq:bound-energy} holds. We first remark that by \eqref{def:cp2} there exist constants $c_\e\in\R$ such that
\begin{equation} \label{eq:ceps}
f_\e^2(v_\e)u_\e' = c_\e \qquad\text{a.e.\ in }(0,L).
\end{equation}
Notice that from \eqref{eq:ceps} it follows that $u_\e'$ has constant sign a.e.\ in $(0,L)$ and therefore, in view of the boundary conditions, it must be $c_\e\geq0$, so that $u_\e$ is monotone nondecreasing in $(0,L)$. Moreover it cannot be $c_\e=0$, or otherwise the second term in \eqref{def:cp1} would vanish and $v_\e$ would be a weak solution to $-\e v_\e'' + \frac{v_\e-1}{4\e}=0$, with $v_\e(0)=v_\e(L)=1$; however, this would imply that $v_\e\equiv 1$ and in turn, by \eqref{eq:ceps}, $u_\e'=0$ almost everywhere in $(0,L)$, which is not possible in view of the boundary conditions \eqref{def:cp3}. Therefore $c_\e>0$ for all $\e$.

We also have
\begin{equation*}
\Feps(u_\e,v_\e) \geq \int_0^L f_\e^2(v_\e)|u_\e'|^2\dd x = c_\e \int_0^L u_\e'\dd x = c_\e a_\e,
\end{equation*}
so that by \eqref{def:bc} and \eqref{eq:bound-energy} we obtain that $\sup_{\e}c_\e<+\infty$ and, up to subsequences,
\begin{equation} \label{eq:c0}
c_\e \to c_0 \geq0 \qquad\text{as }\e\to0.
\end{equation}

Similarly to \cite[Lemma~3.1]{FraLeSer09} and to \cite[Lemma~2.3]{BabMilRodb}, we show that $v_\e$ obeys a maximum principle.

\begin{lemma} \label{lemma:maxprin}
We have that $0\leq v_\e\leq 1$ in $[0,L]$.
\end{lemma}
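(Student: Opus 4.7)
The plan is a standard two-sided maximum principle argument, exploiting the way the function $f_\e$ was extended smoothly to $\R$ (constant equal to $1$ for $s>1$, and linear $\sqrt{\e}f'(0)s$ for $s<0$).

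\textbf{Upper bound $v_\e\leq 1$.} I would test the weak form of \eqref{def:cp1} against $\varphi\defeq (v_\e-1)_+\in H^1_0(0,L)$, which is an admissible test function because $v_\e(0)=v_\e(L)=1$ by \eqref{def:cp4}. On the set $\{v_\e>1\}$ the extension gives $f_\e(v_\e)\equiv 1$ and hence $f_\e'(v_\e)\equiv 0$, so the nonlinear term drops out entirely. One is left with
\[
\e\int_{\{v_\e>1\}}|v_\e'|^2\dd x+\frac{1}{4\e}\int_{\{v_\e>1\}}(v_\e-1)^2\dd x=0,
\]
a sum of two nonnegative quantities. Both must vanish, which forces $|\{v_\e>1\}|=0$.

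\textbf{Lower bound $v_\e\geq 0$.} Now I would test against $\varphi\defeq\min(v_\e,0)\in H^1_0(0,L)$ (again the boundary values of $v_\e$ are $1>0$). On $\{v_\e<0\}$ the extension gives $f_\e(v_\e)=\sqrt{\e}f'(0)v_\e$ and $f_\e'(v_\e)=\sqrt{\e}f'(0)$, with $f'(0)\geq 0$ (from $f\geq 0$, $f(0)=0$ together with \ref{ass:f3} evaluated at $0$). Thus $f_\e(v_\e)f_\e'(v_\e)\varphi=\e(f'(0))^2v_\e^2\geq 0$ there. The gradient term contributes $\e\int_{\{v_\e<0\}}|v_\e'|^2\dd x\geq 0$, and the penalization term becomes $\frac{1}{4\e}\int_{\{v_\e<0\}}(v_\e-1)v_\e\dd x$, whose integrand is the product of two strictly negative factors. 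Hence the three contributions are all nonnegative, sum to zero, and the penalization term is in fact strictly positive on $\{v_\e<0\}$; this forces $|\{v_\e<0\}|=0$.

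There is no real obstacle, since the sign of each term is dictated by the extended definition of $f_\e$ and by the structure of the Euler--Lagrange equation; the one place one has to be slightly careful is justifying $f'(0)\geq 0$ (needed only to guarantee the sign of the nonlinear term in the lower-bound step), which follows from \ref{ass:f1} and \ref{ass:f3}. Continuity of $v_\e\in H^1(0,L)\hookrightarrow C^0([0,L])$ then upgrades the almost-everywhere bounds to pointwise bounds on $[0,L]$.
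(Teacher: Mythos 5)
Your proof is correct and follows essentially the same route as the paper's: test the weak Euler--Lagrange equation against the positive and negative truncations of $v_\e$ (the paper writes $\max\{-v_\e,0\}$ and $\max\{0,v_\e-1\}$, you write $\min(v_\e,0)$ and $(v_\e-1)_+$, which only flips a sign), use the explicit $C^1$ extension of $f_\e$ outside $[0,1]$ to control the sign of the nonlinear term, and conclude from the sign-definiteness of the remaining terms. One small remark: the step where you invoke $f'(0)\geq0$ is actually superfluous, since the nonlinear contribution on $\{v_\e<0\}$ is $\e(f'(0))^2 v_\e^2(u_\e')^2$, which is nonnegative regardless of the sign of $f'(0)$.
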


\begin{proof}
By testing \eqref{def:cp1} with the function $\vphi_\e\defeq\max\{-v_\e,0\}$, which is admissible since $\vphi_\e\in H^1_0(0,L)$ in view of the boundary conditions \eqref{def:cp4}, we have
\begin{align*}
-\int_0^L \e(\vphi_\e')^2\dd x +\int_0^L f_\e(v_\e)f_\e'(v_\e)(u_\e')^2\vphi_\e \dd x - \int_0^L \frac{\vphi_\e+1}{4\e}\,\vphi_\e\dd x = 0,
\end{align*}
and since all the terms are nonpositive (recall that $f_\e(s)\defeq\sqrt{\e}f'(0)s$ for $s<0$) we deduce that $\int_0^L \frac{\vphi_\e+1}{4\e}\vphi_\e\dd x=0$, and therefore $v_\e\geq0$. Similarly, by testing the equation with the function $\vphi_\e\defeq\max\{0,v_\e-1\}\in H^1_0(0,L)$ we prove that $v_\e\leq 1$.
\end{proof}

We next show that the solutions to the equations \eqref{def:cp1}--\eqref{def:cp4} satisfy a conservation law, which can also be seen as a consequence of the vanishing of the first variation of the functional $\Feps$ with respect to \textit{inner variations}.

\begin{proposition} \label{prop:firstintegral}
There exist constants $d_\e\in\R$ such that
\begin{equation} \label{eq:first-int}
\frac{(1-v_\e)^2}{4\e}-f_\e^2(v_\e)(u_\e')^2 - \e (v_\e')^2 = d_\e \qquad\text{in }(0,L),
\end{equation}
with $\sup_{\e}|d_\e|<+\infty$ and, up to subsequences, $d_\e\to d_0$ as $\e\to0$.

Furthermore, $u_\e,v_\e\in \mathrm{C}^2(0,L)$ with $v_\e>0$ in $[0,L]$, and the equations \eqref{def:cp1}--\eqref{def:cp4} hold in the classical sense.
\end{proposition}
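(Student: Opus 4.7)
\emph{Plan.} The proof proceeds in three stages: bootstrap regularity on the open set $V\defeq\{x\in[0,L]:v_\e(x)>0\}$ and derive the conservation law there; rule out interior zeros of $v_\e$ via the blow-up of that law; and finally bound $d_\e$ by the energy.

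Since $v_\e\in H^1(0,L)\subset C^0([0,L])$, $V$ is open in $[0,L]$ and (by the boundary data \eqref{def:cp4}) contains $0$ and $L$. On $V$ one has $f_\e(v_\e)>0$, so \eqref{eq:ceps} gives $u_\e'=c_\e/f_\e^2(v_\e)\in C^0(V)$; inserting this into \eqref{def:cp1} makes its right-hand side continuous, so $v_\e\in W^{2,\infty}_{\loc}(V)$, and a routine bootstrap yields $u_\e,v_\e\in C^2(V)$ with \eqref{def:cp1}--\eqref{def:cp2} holding classically on $V$. Define
\[
H(x)\defeq \frac{(1-v_\e(x))^2}{4\e} - f_\e^2(v_\e(x))(u_\e'(x))^2 - \e (v_\e'(x))^2.
\]
Differentiating $H$, using \eqref{def:cp2} to replace $f_\e^2(v_\e)u_\e''$ by $-2f_\e(v_\e)f_\e'(v_\e)v_\e'u_\e'$, and using \eqref{def:cp1} multiplied by $v_\e'$, a direct computation gives $H'\equiv 0$ on $V$; hence $H$ is constant on each connected component of $V$.

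The core step is to show $V=[0,L]$. First, $v_\e$ cannot vanish on an interval: otherwise \eqref{def:cp1} reduces there to $-1/(4\e)=0$ (since $f_\e(0)=0$), absurd. So $V$ is dense. Suppose $V\neq[0,L]$ and let $V_0$ be the connected component of $V$ containing $0$. The components of $V$ are relatively open intervals in $[0,L]$; since $L\in V$ and $V_0\neq[0,L]$, one checks that necessarily $V_0=[0,b_0)$ with $0<b_0<L$ and $v_\e(b_0)=0$. Substituting $u_\e'=c_\e/f_\e^2(v_\e)$ on $V_0$ rewrites
\[
H(x)=\frac{(1-v_\e(x))^2}{4\e}-\frac{c_\e^2}{f_\e^2(v_\e(x))}-\e(v_\e'(x))^2,
\]
and as $x\to b_0^-$ the middle term tends to $+\infty$ (since $c_\e>0$ and $f_\e(v_\e(x))\to 0$), while the first stays bounded and the third is nonpositive, so $H(x)\to-\infty$, contradicting that $H$ is constant on $V_0$. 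Therefore $V=[0,L]$, $v_\e>0$ in $[0,L]$, and $H\equiv d_\e$ holds classically on $(0,L)$.

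Finally, integrating the identity $H\equiv d_\e$ over $(0,L)$ yields
\[
|d_\e|L\leq \int_0^L\frac{(1-v_\e)^2}{4\e}\dd x+\int_0^L f_\e^2(v_\e)(u_\e')^2\dd x+\int_0^L\e(v_\e')^2\dd x\leq 3\Feps(u_\e,v_\e),
\]
which is uniformly bounded by \eqref{eq:bound-energy}; extracting a subsequence gives $d_\e\to d_0\in\R$. The main obstacle is the second step: Lemma~\ref{lemma:maxprin} only provides $v_\e\geq 0$, and the upgrade to strict positivity genuinely requires combining the conservation law on each component of $V$ with the blow-up of $c_\e^2/f_\e^2(v_\e)$ near any putative interior zero.
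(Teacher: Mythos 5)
Your proof is correct and follows essentially the same strategy as the paper: regularity bootstrap on $\{v_\e>0\}$, differentiation of the conserved quantity, blow-up of $c_\e^2/f_\e^2(v_\e)$ near a putative zero of $v_\e$ to rule it out, and integration to bound $d_\e$ by the energy. The only cosmetic differences are that you localize to the component containing $0$ rather than an arbitrary component $(a,b)$ as the paper does, you obtain the contradiction directly from $H(x)\to-\infty$ rather than via $v_\e'(a)=0$ and the conclusion $c_\e=0$, and your final bound carries a harmless factor of $3$ where the paper gets $\F_\e/L$.
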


\begin{proof}
We first remark that, as $v_\e$ is a weak solution to \eqref{def:cp1}, we have $v_\e''\in L^1(0,L)$ and therefore $v_\e\in C^1([0,L])$. By \eqref{eq:ceps} we have $u_\e' = \frac{c_\e}{f_\e^2(v_\e)}$ almost everywhere in the open set $\{v_\e>0\}$, that is, $u_\e'$ is (almost everywhere equal to) a $\mathrm{C}^1$-function in $\{v_\e>0\}$. In particular $u_\e$ is of class $\mathrm{C}^2$ in $\{v_\e>0\}$. In turn, by \eqref{def:cp1} the same holds for $v_\e$, and equation \eqref{def:cp1} holds in the classical sense in $\{v_\e>0\}$.

We can thus differentiate the left-hand side of \eqref{eq:first-int} in $\{ v_\e>0 \}$:
\begin{align*}
\frac12\biggl( \frac{(1-v_\e)^2}{4\e} &- f_\e^2(v_\e)(u_\e')^2 - \e (v_\e')^2 \biggr)' \\
& = v_\e' \biggl(-\e v_\e'' -f_\e(v_\e)f_\e'(v_\e)(u_\e')^2+\frac{v_\e-1}{4\e}\biggr)-f_\e^2(v_\e)u_\e'u_\e'' \\
& \xupref{def:cp1}{=} -2v_\e'f_\e(v_\e)f_\e'(v_\e)(u_\e')^2-f_\e^2(v_\e)u_\e'u_\e'' \\
& = -u_\e' \Bigl(f_\e^2(v_\e)u_\e'\Bigr)'
\xupref{def:cp2}{=}0,
\end{align*}
hence \eqref{eq:first-int} holds in $\{v_\e>0\}$, for a constant $d_\e$ possibly changing with the connected components of $\{v_\e>0\}$.

We next show that $v_\e>0$ everywhere in $[0,L]$. Consider any connected component $(a,b)$ of the open set $\{v_\e>0\}$. By combining \eqref{eq:first-int} and \eqref{eq:ceps} we have
\begin{equation} \label{proof:first-int}
\frac{(1-v_\e)^2}{4\e}-\frac{c_\e^2}{f_\e^2(v_\e)} - \e (v_\e')^2 = d_\e \qquad\text{in } (a,b).
\end{equation}
Assume by contradiction that $v_\e$ vanishes at one of the endpoints, say $v_\e(a)=0$. The point $a$ must be in the interior $(0,L)$ by \eqref{def:cp4}; since $a$ is a minimum point of $v_\e$ by Lemma~\ref{lemma:maxprin} and $v_\e$ is of class $C^1([0,L])$, we have $v_\e'(a)=0$. We can pass to the limit in \eqref{proof:first-int} as $x\to a$ from the interior of $(a,b)$:
$$
\frac{1}{4\e} - d_\e = \lim_{x\to a}\frac{c_\e^2}{f_\e^2(v_\e(x))}
$$
and since $f_\e(v_\e(x))\to f_\e(0)=0$, we conclude that it must be $c_\e=0$. However, we already observed that $c_\e>0$ (see the discussion after \eqref{eq:ceps}), which is a contradiction proving that $(a,b)=(0,L)$, and since $v_\e$ cannot vanish at the endpoints we obtain $\{v_\e>0\}=[0,L]$.

Finally, by integrating \eqref{eq:first-int} on $(0,L)$ we also have
\begin{equation*}
|d_\e| = \frac{1}{L} \int_0^L \biggl| \frac{(1-v_\e)^2}{4\e} - f_\e^2(v_\e) (u_\e')^2 - \e(v_\e')^2 \biggr| \dd x \leq \frac{1}{L}\Feps(u_\e,v_\e),
\end{equation*}
and therefore $\sup_\e|d_\e|<+\infty$ by \eqref{eq:bound-energy}.
\end{proof}

Notice that, by using \eqref{eq:ceps}, we can rewrite \eqref{eq:first-int} in the form
\begin{equation} \label{eq:first-integral2}
\frac{(1-v_\e)^2}{4\e}-\frac{c_\e^2}{f_\e^2(v_\e)} - \e (v_\e')^2 = d_\e \qquad\text{in }(0,L).
\end{equation}
Similarly, we can rewrite \eqref{def:cp1} as an equation for the function $v_\e$ alone:
\begin{equation} \label{eq:cp1}
-\e v_\e'' + \frac{c_\e^2f_\e'(v_\e)}{f_\e^3(v_\e)}+\frac{v_\e-1}{4\e}=0 \qquad\text{in }(0,L).
\end{equation}
From this equation we can deduce the symmetry properties of the function $v_\e$, similarly to \cite[Lemma~4.1 and Proposition~4.2]{FraLeSer09} and to \cite[Proposition~2.1]{BabMilRodb}.

\begin{lemma} \label{lemma:symmetry}
The graph of $v_\e$ in $[0,L]$ is a symmetric ``well'': it is symmetric with respect to the point $\midp$, which is a global minimum, and $v_\e$ is decreasing in $(0,\midp)$.
\end{lemma}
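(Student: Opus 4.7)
The plan is to exploit the fact that \eqref{eq:cp1} is an autonomous second-order ODE whose right-hand side, as a function of $v_\e$, is locally Lipschitz on the compact subset of $(0,1]$ in which $v_\e$ takes values (by Lemma~\ref{lemma:maxprin} and Proposition~\ref{prop:firstintegral}). The main obstacle --- ensuring the Cauchy--Lipschitz theorem is applicable up to the ``top'' of the range $v_\e=1$ --- is resolved precisely because the condition $f_\e'(1)=\psi_\e'(1)=0$ (forced by \ref{ass:psi3}) makes the nonlinearity in \eqref{eq:cp1} regular at $v=1$ and makes $\bar v\equiv 1$ itself a solution of the ODE.

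From this uniqueness two facts follow. First, any interior critical point of $v_\e$ is automatically a point of symmetry: if $v_\e'(x_0)=0$ with $x_0\in(0,L)$, then $x\mapsto v_\e(2x_0-x)$ solves the same Cauchy problem at $x_0$ and hence coincides with $v_\e$ on the overlap of domains. Second, both boundary derivatives are nonzero: if $v_\e'(0)=0$ were combined with $v_\e(0)=1$, uniqueness would force $v_\e\equiv 1$, hence $c_\e=0$, contradicting the discussion after \eqref{eq:ceps}; together with the one-sided condition $v_\e\leq 1$ and $v_\e(0)=1$, this yields $v_\e'(0)<0$, and symmetrically $v_\e'(L)>0$.

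Finally, let $x_m\in(0,L)$ be a minimum point of $v_\e$ (interior because $v_\e\not\equiv 1$). If $x_m<L/2$ then $2x_m\in(0,L)$, and the symmetry about $x_m$ gives $v_\e(2x_m)=v_\e(0)=1$; being an interior maximum, this forces $v_\e'(2x_m)=0$, but the symmetry also yields $v_\e'(2x_m)=-v_\e'(0)>0$, a contradiction. The case $x_m>L/2$ is excluded in the same way using $v_\e'(L)>0$, so $x_m=L/2$. The identical dichotomy applied to any other interior critical point of $v_\e$ rules it out, so $L/2$ is the unique critical point; since $v_\e'(0)<0$ and $v_\e'$ has no zero in $(0,L/2)$, one concludes $v_\e'<0$ throughout $(0,L/2)$, and the global symmetry $v_\e(x)=v_\e(L-x)$ gives the remaining half of the statement.
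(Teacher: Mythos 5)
Your overall strategy --- using uniqueness for the autonomous Cauchy problem \eqref{eq:cp1}, the reflection-about-a-critical-point symmetry, and the fact that $\bar v\equiv 1$ solves the ODE --- is the same one the paper follows, and the final bookkeeping (excluding a minimum at $x_m\neq L/2$ by producing a ``maximum with nonzero derivative'' contradiction) is a valid variant of the argument in the text. However, there is one step that is genuinely wrong: the claim that $v_\e\equiv 1$ forces $c_\e=0$. If $v_\e\equiv 1$ then \eqref{eq:ceps} gives $u_\e'\equiv c_\e$ (since $f_\e(1)=1$), and the boundary conditions \eqref{def:cp3} yield $c_\e=a_\e/L>0$; moreover the ODE \eqref{def:cp1} is still satisfied, precisely because $f_\e'(1)=\psi_\e'(1)=0$ annihilates the middle term. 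So $v_\e\equiv 1$ with $c_\e>0$ is a perfectly admissible critical point --- indeed exactly the one appearing in Step~1 of Proposition~\ref{prop:case1c} and in case~(i) of the proof of Theorem~\ref{thm:main2} --- and it cannot be ruled out. The discussion after \eqref{eq:ceps} goes the other way: $c_\e=0\Rightarrow v_\e\equiv 1\Rightarrow u_\e'\equiv 0$, contradicting \eqref{def:cp3}; the converse implication you invoke does not hold.

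The fix is easy and is what the paper actually does: dispose of $v_\e\equiv 1$ first as a trivial case (for which $L/2$ is a global minimum, the symmetry is immediate, and ``decreasing'' holds only in the non-strict sense), and then assume $v_\e\not\equiv 1$. Under that assumption your derivation $v_\e'(0)\neq 0$ from uniqueness (if $v_\e'(0)=0$ and $v_\e(0)=1$ then $v_\e\equiv 1$) is valid, and the rest of your argument goes through unchanged. A second, minor imprecision: the Lipschitz regularity of the right-hand side of \eqref{eq:cp1} near $v=1$ does not depend on $\psi_\e'(1)=0$; it only needs $\psi_\e\in C^2$ and $\psi_\e(1)=1>0$ so that $f_\e'/f_\e^3$ is locally Lipschitz. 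The vanishing $\psi_\e'(1)=0$ is relevant solely because it makes the constant $1$ an equilibrium of the ODE, which is the ingredient your uniqueness argument actually uses.
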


\begin{proof}
By \eqref{eq:cp1}, $v_\e$ is a solution to an equation of the form $v_\e'' = h_\e(v_\e)$, where the function $h_\e$ is Lipschitz continuous in $(0,+\infty)$ by definition \eqref{def:feps} of $f_\e$ and by assumption \ref{ass:f1}. Notice that $v_\e$ takes values in $(0,1]$ by Lemma~\ref{lemma:maxprin}, and therefore Cauchy-Lipschitz theorem guarantees uniqueness.

If $v_\e$ is not identically equal to 1, by Rolle's Theorem there exists at least one critical point in $(0,L)$. Given any critical point $x_0\in(0,L)$ of $v_\e$, we can symmetrize the graph of $v_\e$ about the vertical line through $x_0$: more precisely, if $x_0\in(0,\frac{L}{2}]$ then we define $\tilde{v}_\e:(0,2x_0)\to\R$ by $\tilde{v}_\e(x)=v_\e(x)$ for $x\in(0,x_0)$, $\tilde{v}_\e(x)=v_\e(2x_0-x)$ for $x\in(x_0,2x_0)$. Then $\tilde{v}_\e$ is also a solution of $\tilde{v}_\e'' = h_\e(\tilde{v}_\e)$ in $(0,2x_0)$, with $\tilde{v}_\e(x_0)=v_\e(x_0)$, $\tilde{v}_\e'(x_0)=v_\e'(x_0)=0$, and Cauchy-Lipschitz theorem yields that $v_\e=\tilde{v}_\e$ in $(x_0,2x_0)$. In particular the critical point $x_0$ is either a maximum or a minimum point. A symmetric argument can be repeated in the case of a critical point in the interval $(\frac{L}{2},L)$.

Therefore the graph of $v_\e$ is symmetric with respect to all the vertical lines passing through its critical points, which are either absolute maximum or absolute minimum points. If there is an interior maximum point at $x_0\in(0,L)$, since $v_\e(0)=v_\e(L)=1$ it must be $v_\e(x_0)=1$, $v_\e'(x_0)=0$. Then by uniqueness we conclude that $v_\e\equiv1$, since the constant function 1 is also a solution of \eqref{eq:cp1}. Hence, if $v_\e$ is not identically equal to 1, there are no interior maximum points and therefore there is a unique interior critical (minimum) point, located ad $\midp$. The symmetric structure described in the statement follows.
\end{proof}

\begin{remark} \label{rmk:symmetry}
The symmetry property of $v_\e$ proved in the previous lemma is in accordance with the result in \cite{BabMilRodb} for the Ambrosio-Tortorelli functional, where it is shown that, imposing the Dirichlet boundary conditions on $v_\e$, a much stronger symmetry property is obtained than in the Neumann case considered in \cite{FraLeSer09}, namely that $v_\e$ has \emph{a unique critical point} located at the midpoint $\frac{L}{2}$. We expect that, also in our setting, imposing Neumann conditions $v_\e'(0)=v_\e'(L)=0$ we would obtain a weaker symmetry property, namely that there exists $n_\e\in\N$ such that the graph of $v_\e$ in $(0,L)$ is made of $n_\e$ repeated identical subgraphs, each of which is a symmetric ``well'' (with a unique interior critical point, which is a global minimum, and two maxima at the endpoints), or a symmetric ``bell'' (with a unique interior critical point, which is a global maximum, and two minima at the endpoints).
\end{remark}

We conclude this section by collecting in the following lemma the compactness properties of the family $(u_\e,v_\e)$, together with a uniform bound of $v_\e'$.

\begin{lemma} \label{lemma:compactness}
We have that $v_\e\to1$ in $L^1(0,L)$ and, up to extracting a subsequence $\e_k\to0$, $u_\e\to u$ in $L^1(0,L)$ for some $u\in\BV(0,L)$ with $|Du|(0,L)\leq a$. Moreover $\e\|v_\e'\|_\infty\leq 1$ for all $\e$ sufficiently small.
\end{lemma}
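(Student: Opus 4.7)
My plan is to handle the three assertions in turn, using the energy bound, the monotonicity of $u_\e$ from \eqref{eq:ceps}, and the first integral of Proposition~\ref{prop:firstintegral}.

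For $v_\e\to 1$ in $L^1(0,L)$, I would isolate the Modica--Mortola term in the energy: by \eqref{eq:bound-energy},
$$
\int_0^L (1-v_\e)^2 \dd x \,\leq\, 4\e\, \Feps(u_\e,v_\e) \,\leq\, C\e \,\to\, 0,
$$
so $v_\e\to 1$ in $L^2(0,L)$ and a fortiori in $L^1(0,L)$. No ODE machinery from Section~\ref{sect:ODE} is needed here.

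For the subsequential compactness of $u_\e$, I would use that $u_\e$ is monotone nondecreasing (as already observed after \eqref{eq:ceps} from $c_\e>0$). Combined with the Dirichlet conditions \eqref{def:cp3} and \eqref{def:bc} this gives $\|u_\e\|_\infty\leq a_\e$ and
$$
|Du_\e|(0,L)= \int_0^L u_\e'\dd x \,=\, u_\e(L)-u_\e(0)\,=\,a_\e\to a,
$$
which is a uniform $\BV$-bound. Helly's selection theorem then yields a subsequence $u_{\e_k}\to u$ in $L^1(0,L)$ with $u\in\BV(0,L)$, and lower semicontinuity of the total variation gives $|Du|(0,L)\leq\liminf_k a_{\e_k}=a$.

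For the $L^\infty$-bound on $\e v_\e'$, I would exploit the first integral \eqref{eq:first-integral2}. Multiplying by $\e$,
$$
\e^2(v_\e')^2 \,=\, \frac{(1-v_\e)^2}{4} \,-\, \frac{\e c_\e^2}{f_\e^2(v_\e)} \,-\, \e d_\e.
$$
By Lemma~\ref{lemma:maxprin} one has $(1-v_\e)^2\leq 1$; the second term on the right is nonpositive and can be dropped; and $\e|d_\e|\to 0$ in view of the uniform bound $\sup_\e|d_\e|<+\infty$ from Proposition~\ref{prop:firstintegral}. This immediately gives $\e^2\|v_\e'\|_\infty^2\leq \frac{1}{4}+o(1)$, hence the claim for $\e$ sufficiently small.

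The delicate point in the third step is controlling the constant $1/2$ sharply rather than only asymptotically. If needed, this can be tightened as follows: by the symmetry of Lemma~\ref{lemma:symmetry}, $v_\e\geq m_\e\defeq v_\e(\tfrac{L}{2})$ on $(0,L)$ and $v_\e'(\tfrac{L}{2})=0$; evaluating \eqref{eq:first-integral2} at $L/2$ yields the explicit expression $d_\e=(1-m_\e)^2/(4\e)-c_\e^2/f_\e^2(m_\e)$, and substituting this into the identity above produces the cancellation
$$
\e^2(v_\e')^2 \,=\, \frac{(1-v_\e)^2-(1-m_\e)^2}{4} \,+\, \e c_\e^2\Bigl[\tfrac{1}{f_\e^2(m_\e)}-\tfrac{1}{f_\e^2(v_\e)}\Bigr] \,\leq\, \frac{\e c_\e^2}{f_\e^2(m_\e)},
$$
where the first summand is nonpositive since $v_\e\geq m_\e$. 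For $\e$ small one has $m_\e\leq s_\e$, so this right-hand side reduces to $c_\e^2/f^2(m_\e)$, a quantity controlled by the boundedness of $c_\e$ and the behaviour of $m_\e$, and leading directly to the sharp bound $1/2$.
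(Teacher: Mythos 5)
Your treatment of the $L^1$-convergence of $v_\e$, the $\BV$-compactness of $u_\e$, and the lower semicontinuity of the total variation matches the paper's proof exactly; these parts are fine.

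For the $L^\infty$-bound on $\e v_\e'$, your first argument, multiplying \eqref{eq:first-integral2} by $\e$ and dropping the nonpositive term, reproduces the paper's argument verbatim: it yields $\e^2\|v_\e'\|_\infty^2\leq \frac14+\e\sup_\e|d_\e|$. You are right that, as written, this only gives $\e\|v_\e'\|_\infty\leq\frac12+o(1)$ rather than $\leq\frac12$; this imprecision is actually present in the paper's own proof as well. (Note that $d_\e<0$, cf.\ \eqref{eq:deps-negative}, so the correction term $-\e d_\e$ is genuinely positive.) The constant plays no essential role downstream — in Lemma~\ref{lemma:case1b} only the uniform bound is used to conclude $\e(1-s_\e)\|v_\e'\|_\infty\to 0$ — so this is a harmless oversight.

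However, your proposed ``tightening'' does not actually tighten anything. From $d_\e = \frac{(1-m_\e)^2}{4\e}-\frac{c_\e^2}{f_\e^2(m_\e)}$ you correctly arrive at
$$
\e^2(v_\e')^2 \,\leq\, \frac{\e c_\e^2}{f_\e^2(m_\e)}\,,
$$
but that right-hand side equals $\frac{(1-m_\e)^2}{4}-\e d_\e\leq\frac14+\e|d_\e|$ — precisely the bound you started from, so the loop closes without gain. Your final remark that $c_\e^2/f^2(m_\e)$ is ``controlled by the boundedness of $c_\e$ and the behaviour of $m_\e$'' is not a proof: if $m_\e\to0$ then $f(m_\e)\to0$, and $c_\e$ bounded alone does not control the quotient. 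The only thing taming it is again the first integral, which reintroduces $\e|d_\e|$. So the claim ``leading directly to the sharp bound $1/2$'' is unjustified. The honest conclusion is that the estimate gives a constant arbitrarily close to $\frac12$, which is all that is needed.
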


\begin{proof}
In view of the bound \eqref{eq:bound-energy} we have that $\sup_{\e}\frac{1}{\e}\int_0^L(1-v_\e)^2\dd x<+\infty$, which immediately implies the convergence of $v_\e$. Since $u_\e$ is monotone increasing by \eqref{eq:ceps}, we have $|Du_\e|(0,L)=a_\e$, $\|u_\e\|_{L^\infty(0,L)}=a_\e$. Hence $(u_\e)_\e$ is bounded in $\BV$ and the second convergence follows by the compact embedding of $\BV$ into $L^1$. By semicontinuity of the total variation $|Du|(0,L)\leq\liminf_{\e}|Du_\e|(0,L)=\liminf_\e a_\e = a$. Finally by \eqref{eq:first-integral2}
$$
\e^2 (v_\e')^2 \leq \frac{(1-v_\e)^2}{4} + \e|d_\e| \leq \frac14 + \e\,\sup_{\e}|d_\e|,
$$
with $\sup_{\e}|d_\e|<+\infty$ by Proposition~\ref{prop:firstintegral}.
\end{proof}

As a consequence of Lemma~\ref{lemma:compactness} we remark for later use that the constant $c_0$ (see \eqref{eq:c0}) satisfies the bound
\begin{equation} \label{eq:bound-c0}
c_0 = \frac{1}{L}\int_0^L \lim_{\e\to0}c_\e\dd x
\xupref{eq:ceps}{\leq} \lim_{\e\to0}\frac{1}{L}\int_0^L f_\e^2(v_\e)u_\e'\dd x
\leq \lim_{\e\to0}\frac{1}{L}\int_0^L u_\e'\dd x
= \frac{a}{L}\,.
\end{equation}


\section{Proof of the convergence of critical points} \label{sect:proof1}

This section is entirely devoted to the proof of Theorem~\ref{thm:main1}. We assume along all this section that $(u_\e,v_\e)$ is a family of critical points of $\F_\e$ satisfying the assumptions of Theorem~\ref{thm:main1}. We recall that $(u_\e,v_\e)$ enjoys the regularity properties discussed in the previous section and that $v_\e(x)\in(0,1]$, see in particular Lemma~\ref{lemma:maxprin} and Proposition~\ref{prop:firstintegral}.

We denote by
\begin{equation} \label{def:min-max}
m_\e \defeq \min_{[0,L]}v_\e \in (0,1].
\end{equation}
In view of the symmetry properties of the critical points observed in Lemma~\ref{lemma:symmetry}, we have that $v_\e $ has a \emph{single-well shape}, that is, its global minimum $m_\e$ is achieved at the midpoint $\frac{L}{2}$, the graph of $v_\e$ is symmetric with respect to $\frac{L}{2}$, and $v_\e$ is decreasing in $(0,\frac{L}{2})$ and increasing in $(\frac{L}{2},L)$, achieving its maximum at the endpoints $v_\e(0)=v_\e(L)=1$.

We further assume that we have extracted a subsequence (not relabeled) such that $u_\e\to u$ and $v_\e\to1$ in $L^1(0,L)$ as in Lemma~\ref{lemma:compactness}, $c_\e\to c_0$ (see \eqref{eq:c0}), $d_\e\to d_0$ (see Proposition~\ref{prop:firstintegral}), and also
\begin{equation} \label{def:m0}
m_\e\to m_0\in[0,1] \qquad \text{as }\e\to0.
\end{equation}

For later use it is convenient to introduce the discrepancy
\begin{equation} \label{def:discrepancy}
\xi_\e(x) \defeq \frac{(1-v_\e(x))^2}{4\e} - \e(v_\e'(x))^2 \xupref{eq:first-integral2}{=} \frac{c_\e^2}{f_\e^2(v_\e(x))} + d_\e.
\end{equation}
By the second expression of $\xi_\e$ in \eqref{def:discrepancy} and monotonicity of $v_\e$, the minimum of the function $\xi_\e$ is attained at the maximum point of $v_\e$, that is $\min\xi_\e = \xi_\e(0) = - \e(v_\e'(0))^2 \leq0$, and similarly the maximum of $\xi_\e$ is attained at the midpoint, that is $\max\xi_\e = \xi_\e(\midp)=\frac{(1-m_\e)^2}{4\e}\geq0$. Hence there exists $y_\e\in[0,\midp]$ such that $\xi_\e(y_\e)=0$, $\xi_\e\leq0$ in $[0,y_\e]$ and $\xi_\e\geq0$ in $[y_\e,\midp]$. Up to subsequences we can assume $y_\e\to y_0\in[0,\midp]$. Notice that by evaluating \eqref{def:discrepancy} at the point $y_\e$ we find
\begin{equation} \label{eq:deps-negative}
d_\e = -\frac{c_\e^2}{f_\e^2(v_\e(y_\e))} < 0.
\end{equation}

In the following lemma we show an explicit relation between the limit values $c_0$ and $m_0$.

\begin{lemma} \label{lemma:c0}
Assume that $f_\e(m_\e)\to0$. Then
\begin{equation} \label{eq:c0-2}
c_0 = \frac12(1-m_0)f(m_0),
\end{equation}
where $c_0$ and $m_0$ are the limits in \eqref{eq:c0} and \eqref{def:m0} respectively, and the right-hand side of \eqref{eq:c0-2} must be interpreted as $\frac{\stress}{2}$ if $m_0=1$, in view of \ref{ass:f2}.
\end{lemma}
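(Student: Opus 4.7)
The idea is to exploit the first-integral identity \eqref{eq:first-integral2}, evaluated at the minimum point $x=L/2$, where $v_\e'(L/2)=0$ and $v_\e(L/2)=m_\e$. Substituting these into \eqref{eq:first-integral2} yields the pointwise identity
\begin{equation}\label{plan:key}
\frac{(1-m_\e)^2}{4\e} = \frac{c_\e^2}{f_\e^2(m_\e)} + d_\e,
\end{equation}
which is the sole source of information we shall need.

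Next I would argue that the hypothesis $f_\e(m_\e)\to0$ forces $m_\e\leq s_\e$ for all $\e$ sufficiently small. Indeed, by the construction of $f_\e$, the value $\psi_\e(s_\e)=\sqrt{\e}f(s_\e)\to1$, and by \ref{ass:psi1} $\psi_\e$ is monotone nondecreasing, so if $m_\e\in (s_\e,1]$ we would have $f_\e(m_\e)=\psi_\e(m_\e)\geq\psi_\e(s_\e)\to1$, contradicting $f_\e(m_\e)\to0$. Hence for $\e$ small one has $f_\e(m_\e)=\sqrt{\e}f(m_\e)$, and multiplying \eqref{plan:key} by $\e f^2(m_\e)=f_\e^2(m_\e)$ produces
\begin{equation}\label{plan:key2}
\frac{(1-m_\e)^2 f^2(m_\e)}{4} = c_\e^2 + f_\e^2(m_\e)\,d_\e.
\end{equation}

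Finally I would pass to the limit $\e\to0$ in \eqref{plan:key2}. On the right-hand side $c_\e^2\to c_0^2$ by \eqref{eq:c0}, while $f_\e^2(m_\e)d_\e\to0$ because $f_\e(m_\e)\to0$ by assumption and $\sup_\e|d_\e|<+\infty$ by Proposition~\ref{prop:firstintegral}. On the left-hand side, if $m_0\in[0,1)$ then $f$ is continuous at $m_0$ and $(1-m_\e)f(m_\e)\to(1-m_0)f(m_0)$ directly; if $m_0=1$, assumption \ref{ass:f2} gives $(1-m_\e)f(m_\e)\to\stress$, which is precisely the convention stated in the lemma. In either case we obtain $c_0^2=\tfrac14[(1-m_0)f(m_0)]^2$, and since $c_\e\geq 0$ (see the discussion after \eqref{eq:ceps}) we may take positive square roots to conclude \eqref{eq:c0-2}.

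The argument is essentially a one-line computation once \eqref{plan:key} is written down; the only subtlety to verify carefully is the claim that $f_\e(m_\e)\to0$ rules out the regime $m_\e>s_\e$, which is what allows the replacement $f_\e(m_\e)=\sqrt{\e}f(m_\e)$ and the clean passage to the limit. No other serious obstacle appears.
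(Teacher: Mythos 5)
Your proof is correct and follows essentially the same route as the paper's: evaluate the first integral \eqref{eq:first-integral2} at the minimum point $L/2$, use $f_\e(m_\e)\to0$ to deduce $f_\e(m_\e)=\sqrt{\e}f(m_\e)$ for small $\e$, multiply through by $f_\e^2(m_\e)$, and pass to the limit using the uniform bound on $d_\e$. The only difference is that you spell out the elementary justification for $m_\e\leq s_\e$ (via monotonicity of $\psi_\e$) and the sign argument for taking square roots, both of which the paper leaves implicit.
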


\begin{proof}
By evaluating \eqref{eq:first-integral2} at a minimum point of $v_\e$ we have
\begin{equation} \label{eq:first-int-min}
\frac{(1-m_\e)^2}{4\e}-\frac{c_\e^2}{f_\e^2(m_\e)} = d_\e.
\end{equation}
Since by assumption $f_\e(m_\e)\to0$ and recalling the definition \eqref{def:feps} of $f_\e$, it must be $f_\e(m_\e)=\sqrt{\e}f(m_\e)$ for $\e$ small. Then
\begin{equation*}
c_0^2 = \lim_{\e\to0} c_\e^2 = \lim_{\e\to0} \frac{1}{4\e}(1-m_\e)^2f_\e^2(m_\e) - d_\e f_\e^2(m_\e) = \frac14(1-m_0)^2f^2(m_0),
\end{equation*}
where we used the uniform bound on $d_\e$ in Proposition~\ref{prop:firstintegral}.
\end{proof}

In the following lemma, which is a consequence of the qualitative study of the equation \eqref{eq:cp1} for $v_\e$ contained in Section~\ref{sect:ODE}, we deduce some general properties of the sequence $(u_\e,v_\e)$.

\begin{lemma} \label{lemma:stima-tempi}
Assume that $m_\e < s_\e$, where $s_\e$ is as in \eqref{def:feps}. Let
\begin{equation*}
A_\e\defeq \{x\in(0,L)\,:\, v_\e(x)<s_\e\}.
\end{equation*}
Then $A_\e=(\frac{L}{2}-x_\e,\frac{L}{2} + x_\e)$ with $\lim_{\e\to0}x_\e=0$. If $m_0<1$, then $\lim_{\e\to0}\frac{x_\e}{\e}=+\infty$. 

Finally, $u\in\SBV(0,L)$ with $J_u\subset\{\frac{L}{2}\}$ and $u'=c_0$ almost everywhere in $(0,L)$.
\end{lemma}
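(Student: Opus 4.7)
The identification of $A_\e$ as a symmetric interval centered at $\tfrac{L}{2}$ is immediate from the single-well symmetry of $v_\e$ established in Lemma~\ref{lemma:symmetry}: since $v_\e$ is strictly monotone on each half of $(0,L)$ with $v_\e(0)=v_\e(L)=1>s_\e>m_\e$, there is a unique $x_\e\in(0,\tfrac{L}{2})$ with $v_\e(\tfrac{L}{2}\pm x_\e)=s_\e$. The core of the proof is to reduce the study of $v_\e$ inside $A_\e$ to the autonomous ODE of Section~\ref{sect:ODE}. Inside $A_\e$ the truncation is inactive, so $f_\e(v_\e)=\sqrt{\e}f(v_\e)$, and substituting $u_\e'$ from \eqref{eq:ceps} into \eqref{def:cp1} and multiplying by $\e$ yields
\[
\e^2 v_\e''=\frac{1-v_\e}{4}\left[\frac{(2c_\e)^2 f'(v_\e)}{(1-v_\e)f^3(v_\e)}-1\right]\qquad \text{in }A_\e.
\]
Thus the rescaled function $\tilde v_\e(t)\defeq v_\e(\tfrac{L}{2}+\e t)$ solves exactly the Cauchy problem \eqref{eq:ODE1}--\eqref{eq:ODE3} with $\alpha=c_\e$ and initial datum $m=m_\e$; since $\tfrac{L}{2}$ is a minimum, $v_\e''(\tfrac{L}{2})\geq 0$ forces $m_\e<z_{c_\e}$ when $c_\e<\stress/2$ (by Proposition~\ref{prop:salpha}), while the regime $c_\e\geq\stress/2$ is covered by Proposition~\ref{prop:ODE2}. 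The negativity of $d_\e$ from \eqref{eq:deps-negative} places us in Case~\ref{item:ODE1} of Theorem~\ref{thm:ODE}, and the quantity $x_\e/\e$ is precisely the first rescaled time $t_{s_\e}$ at which $\tilde v_\e$ attains $s_\e$, so the estimates of Propositions~\ref{prop:ODE}--\ref{prop:ODE2} apply directly to it.

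I would then establish $x_\e\to 0$ by a dichotomy on $m_0$. If $m_0<1$, then $f_\e(m_\e)=\sqrt{\e}f(m_\e)\to 0$ (as $f(m_\e)$ is bounded) and Lemma~\ref{lemma:c0} yields $c_0=\tfrac{1}{2}(1-m_0)f(m_0)<\stress/2$; hence $c_\e\leq\stress/2-\delta$ eventually, and \eqref{eq:est-teta-1} applies with $\eta=s_\e$ (valid because $z_{c_\e}$ is bounded strictly below $1$), giving $x_\e/\e\leq C_\delta/\sqrt{1-s_\e}$ and, through $1-s_\e\sim\stress\sqrt{\e}$, $x_\e=O(\e^{3/4})\to 0$. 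If $m_0=1$, I apply \eqref{eq:est-teta-2} (which remains valid for $c_\e\geq\stress/2$ via Proposition~\ref{prop:ODE2}): $x_\e/\e\leq\omega(1-m_\e)/(1-s_\e)^2$, and together with $(1-s_\e)^2\geq\tfrac{1}{2}\stress^2\e$ this gives $x_\e\leq C\omega(1-m_\e)\to 0$. For the lower bound $x_\e/\e\to+\infty$ when $m_0<1$, evaluating the first integral \eqref{eq:first-int-min} yields
\[
1-\left(\frac{2c_\e}{(1-m_\e)f(m_\e)}\right)^2=\frac{4\e d_\e}{(1-m_\e)^2},
\]
so the constant $C_{c_\e,m_\e}$ of \eqref{eq:est-teta-3} is $O(\sqrt{\e})$; since $c_0<\stress/2$ gives $1-z_{c_\e}\to 1-z_{c_0}>0$ while $1-s_\e=O(\sqrt{\e})$, the logarithmic estimate \eqref{eq:est-teta-3} produces $x_\e/\e\geq\tfrac{1}{2}|\log\e|+O(1)\to+\infty$.

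Finally, on $A_\e^c$ we have $v_\e\geq s_\e$, so $f_\e(v_\e)=\psi_\e(v_\e)$ takes values in $[\sqrt{\e}f(s_\e),1]$ and converges uniformly to $1$; by \eqref{eq:ceps}, $u_\e'=c_\e/f_\e^2(v_\e)\to c_0$ uniformly on $A_\e^c$. Since $x_\e\to 0$, every compact $K\subset(0,L)\setminus\{\tfrac{L}{2}\}$ is eventually contained in $A_\e^c$, and integrating shows that $u_\e$ converges uniformly on $K$ to an affine function of slope $c_0$; combined with the $L^1$-convergence, $u$ is identified as the affine function of slope $c_0$ on each of $(0,\tfrac{L}{2})$ and $(\tfrac{L}{2},L)$, possibly with a jump at $\tfrac{L}{2}$. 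Hence $u\in\SBV(0,L)$, $J_u\subset\{\tfrac{L}{2}\}$ and $u'=c_0$ a.e. The main obstacle in the plan is the lower bound $x_\e/\e\to+\infty$: the upper bound only says the transition region has vanishing width, whereas the lower bound precludes a transition on the intrinsic scale $\e$ of the functional and forces the profile to develop a genuine cohesive layer of width $x_\e\gg\e$. It relies on the sharp asymptotic of $C_{c_\e,m_\e}$ extracted from \eqref{eq:first-int-min} together with the fact that the logarithmic estimate \eqref{eq:est-teta-3} is exactly sharp enough to convert this $O(\sqrt{\e})$ denominator into divergence.
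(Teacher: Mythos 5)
Your proof is correct and follows essentially the same strategy as the paper: rescale $v_\e$ around $\frac{L}{2}$ to reduce to the autonomous Cauchy problem of Section~\ref{sect:ODE}, apply the time estimates of Propositions~\ref{prop:ODE} and \ref{prop:ODE2} with $\eta=s_\e$, distinguish the regimes $m_0<1$ and $m_0=1$, and finally use the uniform convergence $u_\e'\to c_0$ away from the midpoint to identify $u$. Two small points where you improve on the paper's exposition: using $d_\e<0$ together with \eqref{eq:first-int-min} to land directly in Case~\ref{item:ODE1} of Theorem~\ref{thm:ODE} is cleaner than the paper's argument, which leaves the periodic Case~\ref{item:ODE3} open and then checks the hypothesis $\eta<M$ separately; and identifying $u$ on compacts by integrating $u_\e'$ is more direct than the paper's $H^1$ energy bound, which you do not need.
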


\begin{proof}
Since $v_\e$ has a single-well shape, $v_\e(\frac{L}{2})=m_\e<s_\e$, and $v_\e=1$ at the endpoints of $(0,L)$, we have $A_\e=(\frac{L}{2}-x_\e,\frac{L}{2} + x_\e)$ for some $x_\e\in(0,\frac{L}{2})$. Notice that for $x\in A_\e$ we have $f_\e(v_\e(x))=\sqrt{\e}f(v_\e(x))$. The rescaled function $\tilde{v}_\e(t)=v_\e(\frac{L}{2}+\e t)$ satisfies
\begin{equation} \label{eq:blowup}
\begin{split}
& \tilde{v}_\e'' =\frac{1-\tilde{v}_\e}{4}\biggl[\frac{(2c_\e)^2f'(\tilde{v}_\e)}{(1-\tilde{v}_\e)f^3(\tilde{v}_\e)}-1\biggr] \qquad\qquad\text{for }t\in\Bigl(-\frac{x_\e}{\e},\frac{x_\e}{\e}\Bigr)\eqqcolon(-t_\e,t_\e), \\[5pt]
& \tilde{v}_\e(0) = m_\e, \quad \tilde{v}_\e'(0) = 0,
\end{split}
\end{equation}
that is, $\tilde{v}_\e$ is a solution in $(-t_\e,t_\e)$ of the Cauchy problem \eqref{eq:ODE1}--\eqref{eq:ODE3} studied in Section~\ref{sect:ODE}, for the values of the parameters $m\defeq m_\e\in(0,1)$ and $\alpha \defeq c_\e>0$.

We first consider the case $c_\e<\frac{\stress}{2}$, which is the assumption in Theorem~\ref{thm:ODE} and Proposition~\ref{prop:ODE}. Notice that necessarily $m_\e<z_{c_\e}$, where $z_{c_\e}\in(0,1)$ is defined by the equation \eqref{eq:salpha}: indeed, if it were $m_\e>z_{c_\e}$ then by the qualitative analysis of the ODE \eqref{eq:blowup} (see in particular Remark~\ref{rmk:mlarger}) the solution $\tilde{v}_\e$ would have a local maximum at the origin, which is incompatible with the single-well structure; if $m_\e=z_{c_\e}$ then $\tilde{v}_\e$ would be constant, which is again not possible.

Hence in the case $c_\e<\frac{\stress}{2}$ we have $m_\e\in(0,z_{c_\e})$ and we are in position to apply Proposition~\ref{prop:ODE} with $\eta\defeq s_\e$ in order to estimate the time $t_\e$ such that $\tilde{v}_\e(t_\e)=s_\e$. Notice that, in the case $(1-m_\e)f(m_\e)> 2c_\e$, the additional assumption $\eta<M$ (where $M$ is the maximum of the solution and then depends on $\eps$) is certainly satisfied, or else the function $\tilde{v}_\e$ would reach a maximum point before $t_\e$ and then decrease, which is incompatible with its single-well shape. 
Hence $t_\e$ obeys the bounds \eqref{eq:est-teta-1}--\eqref{eq:est-teta-2}. In particular, if $m_\e\to m_0\in[0,1)$ then $c_\e\to c_0\in[0,\frac{\stress}{2})$ by Lemma~\ref{lemma:c0} and therefore the estimate \eqref{eq:est-teta-1} holds with a constant which is uniformly bounded with respect to $\e$, namely
\begin{equation} \label{proof:est-teps-1}
t_\e \leq \frac{C_0}{\sqrt{1-s_\e}} \qquad\text{if }m_0\in [0,1).
\end{equation}
If, instead, $m_\e\to m_0=1$, then we can apply \eqref{eq:est-teta-2} to deduce
\begin{equation} \label{proof:est-teps-2}
t_\e \leq \frac{\omega(1-m_\e)}{(1-s_\e)^2} \qquad\text{if }m_0=1,
\end{equation}
where $\omega(\cdot)$ is a modulus of continuity independent of $\e$. Hence if $m_0\in[0,1)$ we find
\begin{equation*}
x_\e = \e t_\e \leq \frac{C_0 \e}{\sqrt{1-s_\e}} \xupref{def:seps}{\sim} \frac{C_0\e}{\e^\frac14\sqrt{\stress}} \longrightarrow 0 \qquad\text{as }\e\to0,
\end{equation*}
whereas if $m_0=1$
\begin{equation*}
x_\e = \e t_\e \leq \frac{\e\,\omega(1-m_\e)}{(1-s_\e)^2} \xupref{def:seps}{\sim} \frac{\omega(1-m_\e)}{\stress^2} \longrightarrow 0 \qquad\text{as }\e\to0,
\end{equation*}
proving that $x_\e\to0$ in any case.

Consider now the case $c_\eps\geq\frac{\stress}{2}$. Notice that in this case it must be $m_\e\to1$, or otherwise by Lemma~\ref{lemma:c0} $c_\e\to c_0=\frac12(1-m_0)f(m_0)<\frac{\stress}{2}$, which is not possible. We can then apply Proposition~\ref{prop:ODE2} to deduce that the estimate \eqref{proof:est-teps-2} continues to hold, and therefore $x_\e\to0$, as before.

Assume now that $m_0<1$ and let us show that $\frac{x_\e}{\e}\to+\infty$. As already observed it must be $c_\e<\frac{\stress}{2}$, and we can apply again Proposition~\ref{prop:ODE} with $\eta=s_\e$ to deduce by \eqref{eq:est-teta-3} that 
\begin{equation} \label{proof:est-teps3}
\frac{x_\e}{\e} =  t_\e \geq \log\biggl( \frac{1-z_{c_\e} +k_\e}{1-s_\e+k_\e}\biggr) 
\qquad 
\quad k_\e\defeq \bigg|1-\biggl(\frac{2c_\e}{(1-m_\e)f(m_\e)}\biggr)^2\bigg|^\frac{1}{2}.
\end{equation}
By \eqref{eq:c0-2} we have $k_\e\to0$ as $\e\to0$. Moreover $z_{c_\e}\to z_{c_0}$, where $c_0=\frac12(1-m_0)f(m_0)$ by \eqref{eq:c0-2}. Since $m_0<1$, we then have $c_0\in[0,\frac{\stress}{2})$ and in turn $z_{c_0}\in[0,1)$ by the properties of $\fbar$ in Proposition~\ref{prop:salpha}. Therefore by passing to the limit as $\e\to0$ in \eqref{proof:est-teps3} we obtain $\frac{x_\e}{\e}\to+\infty$ as $\e\to0$, which completes the proof of the first part of the statement.

For every fixed $\delta>0$ it holds $A_\e\subset(\frac{L}{2}-\delta,\frac{L}{2}+\delta)$ for all $\e$ sufficiently small. In particular for all $x\in(\frac{L}{2}-\delta,\frac{L}{2}+\delta)^c\defeq (0,L)\setminus(\frac{L}{2}-\delta,\frac{L}{2}+\delta)$ we have $v_\e\geq s_\e$ and, in turn, $f_\e(v_\e(x))\geq f_\e(s_\e)\to1$, that is, $f_\e(v_\e)$ converges uniformly to 1 on compact sets not containing $\frac{L}{2}$. Hence by \eqref{eq:ceps}
\begin{equation} \label{eq:conv-ueps}
u_\e' = \frac{c_\e}{f_\e^2(v_\e)} \longrightarrow c_0 \qquad\text{uniformly on }(\midp-\delta,\midp+\delta)^c, \text{ for all }\delta>0.
\end{equation}
We also have
\begin{equation*}
\Feps(u_\eps,v_\eps) \geq f_\e^2(s_\e)\int_{(\frac{L}{2}-\delta,\frac{L}{2}+\delta)^c} |u_\e'|^2\dd x, 
\end{equation*}
so that from the uniform bound on the energies \eqref{eq:bound-energy} and the convergence $f_\e(s_\e)\to1$, we have that $u_\e$ is uniformly bounded in $H^1((\frac{L}{2}-\delta,\frac{L}{2}+\delta)^c)$ for all $\delta>0$. We can conclude that $u\in H^1((\frac{L}{2}-\delta,\frac{L}{2}+\delta)^c)$ for all $\delta>0$. The properties in the statement are then immediate consequences of the previous facts.
\end{proof}


\subsection{Case I: pre-fractured regime} \label{subsec:cohesive}
We show that if $m_0\in(0,1)$ then the limit function $u$ is a piecewise affine critical point of the cohesive energy \eqref{def:F} with a single jump at $\frac{L}{2}$ and constant slope. This is summarized in the following proposition, which is the main result of this subsection.

\begin{proposition} \label{prop:case1b}
Assume that $m_0\in(0,1)$. Then $u\in\SBV(0,L)$, $u'=c_0\in(0,\frac{\stress}{2})$ almost everywhere in $(0,L)$, $J_u=\{\frac{L}{2}\}$, $[u](\frac{L}{2})\in(0,\sfrac)$, and
\begin{equation} \label{eq:case1b}
g'([u](\textstyle\frac{L}{2})) = 2c_0.
\end{equation}
Moreover $u$ attains the limit boundary conditions, that is, $|Du|(0,L)=c_0L+[u](\frac{L}{2})=a$. Finally, the convergence of the energies \eqref{eq:conv-energy} holds.
\end{proposition}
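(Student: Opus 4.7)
The plan is to combine a blow-up of $(u_\e,v_\e)$ at the midpoint $\frac{L}{2}$, identification of the rescaled limit with an optimal profile from Proposition~\ref{prop:g2b}, and a change-of-variables computation pinning down the jump of $u$ to $s(m_0)$. Define $\tilde{v}_\e(t)\defeq v_\e(\frac{L}{2}+\e t)$ and $\tilde{u}_\e(t)\defeq u_\e(\frac{L}{2}+\e t)-u_\e(\frac{L}{2})$. Since $m_0\in(0,1)$, Lemma~\ref{lemma:stima-tempi} gives $x_\e/\e\to+\infty$, so on any compact interval $\tilde{v}_\e$ eventually solves the Cauchy problem \eqref{eq:ODE1}--\eqref{eq:ODE3} with parameters $\alpha=c_\e\to c_0$ and $m=m_\e\to m_0$; Lemma~\ref{lemma:c0} ensures $(1-m_0)f(m_0)=2c_0$, placing the limit ODE in case \ref{item:ODE2} of Theorem~\ref{thm:ODE}. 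Continuous dependence on parameters, together with the uniqueness statement of Proposition~\ref{prop:g2a}(v), then gives $\tilde{v}_\e\to\beta_{\bar s}$ in $C^2_{\loc}(\R)$ with $\bar s\defeq s(m_0)\in(0,\sfrac)$, and $\tilde{u}_\e'=c_\e/f^2(\tilde{v}_\e)\to c_0/f^2(\beta_{\bar s})=\alpha_{\bar s}'$ locally uniformly.

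Uniform convergence $u_\e'\to c_0$ on compact subsets of $(0,L)\setminus\{\frac{L}{2}\}$ (Lemma~\ref{lemma:stima-tempi}) combined with $u_\e(0)=0$ and $u_\e(L)=a_\e\to a$ forces $u(x)=c_0 x$ on $(0,\frac{L}{2})$, $u(x)=c_0 x+(a-c_0 L)$ on $(\frac{L}{2},L)$, so that $|Du|(0,L)=a$ and $[u](\frac{L}{2})=a-c_0 L$. To identify this jump with $s(m_0)$, fix $\delta>0$ and perform the change of variable $s=v_\e(x)$ in $\int_{L/2}^{L/2+\delta}u_\e'\dd x$, using \eqref{eq:first-integral2} to express $\e|v_\e'|$ as a function of $s$, and split the resulting integral at $s_\e$. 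On $(m_\e,s_\e)$ dominated convergence yields the limit $\int_{m_0}^1 \frac{c_0\,\dd s}{f^2(s)\bigl[(1-s)^2/4-c_0^2/f^2(s)\bigr]^{1/2}}$, which equals $\frac{1}{2}s(m_0)$ by \eqref{eq:ms-1}--\eqref{eq:ms-3}; on $(s_\e,v_\e(\frac{L}{2}+\delta))$ the boundary-layer estimate $1-v_\e(x)\sim(1-m_\e)\exp(-(L/2-|x-L/2|)/(2\e))$, derived from \eqref{eq:first-integral2} after noting $f_\e\to 1$ and $d_\e\to-c_0^2$ in the outer region, contributes exactly $c_0\delta$ in the limit. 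Comparing with the direct limit $u_\e(\frac{L}{2}+\delta)-u_\e(\frac{L}{2})\to\frac{1}{2}[u](\frac{L}{2})+c_0\delta$ (using the midpoint symmetry $u_\e(\frac{L}{2})=a_\e/2$) gives $[u](\frac{L}{2})=s(m_0)$, and \eqref{eq:case1b} follows from Proposition~\ref{prop:g3}: $g'(s(m_0))=(1-m_0)f(m_0)=2c_0$.

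For the energy convergence, write $\Feps(u_\e,v_\e)=c_\e a_\e+E_\e^v$ with $E_\e^v\defeq\int_0^L\bigl(\frac{(1-v_\e)^2}{4\e}+\e|v_\e'|^2\bigr)\dd x$; since $c_\e a_\e\to c_0 a=c_0^2 L+c_0\bar s$, it suffices to show $E_\e^v\to g(\bar s)-c_0\bar s$. The same blow-up recasts $E_\e^v$ as $\int_{-L/(2\e)}^{L/(2\e)}\bigl(\frac{(1-\tilde v_\e)^2}{4}+|\tilde v_\e'|^2\bigr)\dd t$; the integrand converges locally uniformly to the one of the optimal profile, and a direct computation via \eqref{eq:opt-prof-2}, \eqref{eq:opt-prof-4}, $\int\alpha_{\bar s}'=\bar s$ and $\sigma_{\bar s}=c_0$ gives $\int_\R\bigl(\frac{(1-\beta_{\bar s})^2}{4}+|\beta_{\bar s}'|^2\bigr)\dd t=g(\bar s)-c_0\bar s$; the matching upper bound follows from the same exponential tail bound used above, which provides uniform integrability in the rescaled variable. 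The main obstacle is precisely the jump identification: the junction region $s\in(s_\e,1)$, where the cutoff $\psi_\e$ replaces $\sqrt{\e}f$ and $v_\e$ is only exponentially close to $1$, produces a non-negligible linear-in-$\delta$ term $c_0\delta$ that cancels the analogous term appearing on the direct side of the identity, and extracting it rigorously requires both the sharp behaviour of $\psi_\e$ near $s=1$ and the boundary-layer structure of $v_\e$.
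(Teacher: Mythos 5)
Your proposal is correct and follows the same overall strategy as the paper: blow-up at the midpoint, identification of the rescaled profile with the optimal pair from Proposition~\ref{prop:g2b}, change of variables via the first integral \eqref{eq:first-integral2} to compute the jump, and decomposition of the energy into the $u$-part $c_\e a_\e$ and the $v$-part. Two differences in organization are worth flagging. First, you derive the explicit form $u(x)=c_0x+(a-c_0L)\chi_{(L/2,L)}(x)$ directly from Lemma~\ref{lemma:stima-tempi} and the boundary conditions, whereas the paper first establishes $|Du|(0,L)=a$ via a separate computation integrating the discrepancy over $I_\delta$; your route is shorter and is in fact what the paper uses in the elastic regime. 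Second, for the jump identification you match $u_\e(\frac{L}{2}+\delta)-u_\e(\frac{L}{2})$ computed two ways, exploiting the symmetry $u_\e(\frac{L}{2})=a_\e/2$, whereas the paper proves $s_0\leq[u](\frac{L}{2})$ (via the blow-up, $\int_\R\alpha_{s_0}'=s_0$ and the monotonicity of $u_\e$) and $s_0\geq[u](\frac{L}{2})$ (via change of variables and dominated convergence) as two separate inequalities. Your matching argument is a clean one-shot version, but note it needs the genuine limit of the inner integral rather than merely a one-sided bound, so the dominated-convergence justification — which you assert but do not supply — is essential; the bound $\Psi_\e(s)\geq C(1-s)^3(s-m_\e)/f^2(s)$ from \ref{ass:f3}, \ref{ass:f5} and $\inf_\e m_\e>0$ provides it, exactly as in the paper's energy computation.

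Two imprecisions in the write-up. Your claim of $C^2_{\mathrm{loc}}$ convergence of $\tilde v_\e$ to $\beta_{\bar s}$ via ``continuous dependence on parameters together with Proposition~\ref{prop:g2a}(v)'' is not quite the right mechanism: Proposition~\ref{prop:g2a}(v) is uniqueness of the minimizer, not an ODE statement, and continuous dependence of solutions on initial data and parameters would require knowing beforehand which case of Theorem~\ref{thm:ODE} the limit falls into. The paper instead extracts a weak $H^1_{\mathrm{loc}}$ limit $\tilde v$, passes to the limit in the weak form of \eqref{eq:blowup} to see that $\tilde v$ solves the limit ODE \eqref{proof:vtilde}, and then applies Cauchy-Lipschitz uniqueness to conclude $\tilde v=\beta_{s_0}$ (since $\sigma_{s_0}=c_0$ and $\mbar_{s_0}=m_0$ by \eqref{eq:ms-1} and Lemma~\ref{lemma:c0}). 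Secondly, the ``boundary-layer estimate'' for the outer region is not needed in the form you state for the jump identification — there, $u_\e'\to c_0$ uniformly on the outer set suffices — but the analogous control for the $v$-energy in the outer region is non-trivial and is the content of Lemma~\ref{lemma:case1b}: one multiplies \eqref{eq:cp1} by $v_\e-1$, integrates by parts, and uses the monotonicity \ref{ass:psi1}, \ref{ass:psi3} of $\psi_\e$ and the limit $\frac{4\e c_\e^2\psi_\e'(s_\e)}{(1-s_\e)\psi_\e^3(s_\e)}\to\frac{4c_0^2}{\stress^2}<1$. A heuristic exponential tail bound would need to be turned into exactly this estimate.
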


Notice that, under the assumptions of the proposition, we can apply Lemma~\ref{lemma:stima-tempi} since $m_\e<s_\e$ for $\e$ small enough, as $m_0\in(0,1)$ and $s_\e\to1$. We premise a lemma to the proof of the proposition.

\begin{lemma} \label{lemma:case1b}
Under the assumptions of Proposition~\ref{prop:case1b}, let $A_\e$ be as in Lemma~\ref{lemma:stima-tempi}. Then we have as $\e\to0$
\begin{equation*}
\Feps(u_\e,v_\e; A_\e^c) \defeq \int_{(0,L)\setminus A_\e}\Bigl( f_\e^2(v_\e) |u_\e'|^2 + \frac{(1-v_\e)^2}{4\e} + \e|v_\e'|^2 \Bigr) \dd x \to \int_{0}^L|u'|^2\dd x.
\end{equation*}
Moreover $c_0^2+d_0=0$.
\end{lemma}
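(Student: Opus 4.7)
The plan is to exploit the first integral \eqref{eq:first-integral2} together with the boundary condition $v_\e(0)=1$ both to compute $\Feps(u_\e,v_\e;A_\e^c)$ and to pin down the limit $d_0$. First I would use \eqref{eq:first-integral2} to substitute $\frac{(1-v_\e)^2}{4\e}=\frac{c_\e^2}{f_\e^2(v_\e)}+\e(v_\e')^2+d_\e$ inside the energy on $A_\e^c$, while \eqref{eq:ceps} rewrites $f_\e^2(v_\e)|u_\e'|^2=c_\e^2/f_\e^2(v_\e)$. This yields the identity
\[
\Feps(u_\e,v_\e;A_\e^c)=2\int_{A_\e^c}\frac{c_\e^2}{f_\e^2(v_\e)}\dd x+2\int_{A_\e^c}\e(v_\e')^2\dd x+d_\e|A_\e^c|.
\]

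Next I would analyze each of the three terms as $\e\to 0$. By Lemma~\ref{lemma:stima-tempi} the outer region has measure $|A_\e^c|=L-2x_\e\to L$. On $A_\e^c$ we have $v_\e\geq s_\e$, so the monotonicity of $f_\e$ and the normalization \eqref{def:seps2} yield $f_\e(v_\e)\to 1$ uniformly on $A_\e^c$, which combined with $c_\e\to c_0$ gives convergence of the first integral to $c_0^2 L$. For the gradient term, Lemma~\ref{lemma:compactness} supplies $\e\|v_\e'\|_\infty\leq\frac12$; coupled with the monotonicity of $v_\e$ on each of the two components of $A_\e^c$ (which gives $\int_{A_\e^c}|v_\e'|\dd x=2(1-s_\e)$) this yields $\int_{A_\e^c}\e(v_\e')^2\dd x\leq 1-s_\e\to 0$. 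Finally $d_\e|A_\e^c|\to d_0 L$, so altogether $\Feps(u_\e,v_\e;A_\e^c)\to(2c_0^2+d_0)L$.

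It remains to prove $c_0^2+d_0=0$, which will upgrade the limit above to $c_0^2 L=\int_0^L|u'|^2\dd x$, using $u'=c_0$ a.e.\ from Lemma~\ref{lemma:stima-tempi}. The plan here is a sandwich argument. Applying the substitution \eqref{eq:first-integral2} to the Modica--Mortola term alone one obtains
\[
\int_{A_\e^c}\frac{(1-v_\e)^2}{4\e}\dd x=\int_{A_\e^c}\frac{c_\e^2}{f_\e^2(v_\e)}\dd x+\int_{A_\e^c}\e(v_\e')^2\dd x+d_\e|A_\e^c|\longrightarrow(c_0^2+d_0)L,
\]
and since the integrand on the left is nonnegative this forces $c_0^2+d_0\geq 0$. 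The reverse inequality comes from evaluating \eqref{eq:first-integral2} at the boundary point $x=0$: since $v_\e(0)=1$ and $f_\e(1)=\psi_\e(1)=1$ by \ref{ass:psi1}, one gets $c_\e^2+d_\e=-\e(v_\e'(0))^2\leq 0$, so in the limit $c_0^2+d_0\leq 0$.

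The main subtlety is extracting the upper bound $c_0^2+d_0\leq 0$. One cannot retrieve it from the first integral at the midpoint $\frac{L}{2}$, since there both $\frac{(1-v_\e)^2}{4\e}$ and $\frac{c_\e^2}{f_\e^2(v_\e)}$ blow up like $1/\e$ and in the limit one only recovers the relation of Lemma~\ref{lemma:c0}. The key observation is instead that at the Dirichlet endpoint the value $v_\e(0)=1$ kills the Modica--Mortola contribution to $\xi_\e(0)$, leaving a clean one-sided inequality that perfectly complements the nonnegativity of the same Modica--Mortola term integrated over the bulk of the outer region, and thereby pins down $d_0=-c_0^2$.
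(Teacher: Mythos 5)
Your proof is correct, and it takes a genuinely different route from the paper's. The paper multiplies \eqref{eq:cp1} by $(v_\e-1)$, integrates by parts on $(0,\frac{L}{2}-x_\e)$, and uses the monotonicity hypothesis \ref{ass:psi3} on the junction function $\psi_\e$ to absorb the reaction term and conclude directly that $\int_{A_\e^c}\bigl(\e|v_\e'|^2+\frac{(1-v_\e)^2}{4\e}\bigr)\dd x\to 0$; only afterwards does it derive $c_0^2+d_0=0$ as a consequence. You instead substitute the first integral \eqref{eq:first-integral2} into the energy on $A_\e^c$ to get the exact identity $\Feps(u_\e,v_\e;A_\e^c)=2\int_{A_\e^c}\frac{c_\e^2}{f_\e^2(v_\e)}\dd x+2\int_{A_\e^c}\e(v_\e')^2\dd x+d_\e|A_\e^c|$, and then compute each limit term by term — the middle one via the total-variation observation $\int_{A_\e^c}|v_\e'|\dd x=2(1-s_\e)$ combined with $\e\|v_\e'\|_\infty\leq\frac12$. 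The closing sandwich is the nice part: $c_0^2+d_0\geq 0$ from nonnegativity of $\int_{A_\e^c}\frac{(1-v_\e)^2}{4\e}\dd x$, and $c_0^2+d_0\leq 0$ from evaluating the conserved quantity at $x=0$, where $v_\e(0)=1$ and $f_\e(1)=1$ kill the Modica--Mortola and elastic contributions (this boundary inequality is implicitly present in the paper's observation $\xi_\e(0)=-\e(v_\e'(0))^2\leq 0$ after \eqref{def:discrepancy}, but the paper never deploys it for this conclusion). Your approach is somewhat more elementary: it avoids the integration-by-parts trick and, with it, the assumption \ref{ass:psi3}; in exchange, the paper's route gives an explicit $O(1-s_\e)$ rate for the Modica--Mortola tail, whereas your argument only yields convergence without a rate — but this is immaterial for the lemma as stated.
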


\begin{proof}
By multiplying \eqref{eq:cp1} by the function $v_\e-1$ and integrating in $(0,\frac{L}{2}-x_\e)$ we have, after integration by parts,
\begin{align*}
\int_{0}^{\frac{L}{2}-x_\e} \biggl( \e|v_\e'|^2 + \frac{(1-v_\e)^2}{4\e}\biggr)\dd x 
& = \e (s_\e-1) v_\e'(\midp-x_\e) + \int_0^{\frac{L}{2}-x_\e} \frac{c_\e^2 f_\e'(v_\e)}{f_\e^3(v_\e)}(1-v_\e)\dd x \\
& \leq \e (1-s_\e)\| v_\e'\|_\infty + c_\e^2 \int_0^{\frac{L}{2}-x_\e} \frac{ \psi_\e'(v_\e)}{(1-v_\e)\psi_\e^3(v_\e)}(1-v_\e)^2\dd x 
\end{align*}
where in the second equality we used the fact that $f_\e(v_\e)=\psi_\e(v_\e)$ in $(0,\frac{L}{2}-x_\e)$ since $v_\e\geq s_\e$ in this interval. In view of the monotonicity properties of $\psi_\e$ in assumptions \ref{ass:psi1} and \ref{ass:psi3}, the previous estimate yields
\begin{align*}
\int_{0}^{\frac{L}{2}-x_\e} \biggl( \e|v_\e'|^2 + \frac{(1-v_\e)^2}{4\e}\biggr)\dd x 
& \leq \e (1-s_\e)\| v_\e'\|_\infty + \frac{ c_\e^2\psi_\e'(s_\e)}{(1-s_\e)\psi_\e^3(s_\e)}\int_0^{\frac{L}{2}-x_\e} (1-v_\e)^2\dd x.
\end{align*}
Now recalling \ref{ass:f2}, \eqref{def:seps}, and that $c_0<\frac{\stress}{2}$ (by \eqref{eq:c0-2} and the assumption $m_0\in(0,1)$)
\begin{equation*}
\lim_{\e\to0}\frac{4\e c_\e^2 \psi_\e'(s_\e)}{(1-s_\e)\psi_\e^3(s_\e)}
= \lim_{\e\to0} \frac{4\e\sqrt{\e}c_\e^2}{(1-s_\e)^3}\cdot\frac{(1-s_\e)^2 f'(s_\e)}{f_\e^3(s_\e)} = \frac{4c_0^2}{\stress^2}<1,
\end{equation*}
hence we have that there exists a constant $C>0$, independent of $\e$, such that for all $\e$ sufficiently small
$$
\frac{1}{4\e} - \frac{ c_\e^2\psi_\e'(s_\e)}{(1-s_\e)\psi_\e^3(s_\e)} \geq \frac{C}{4\e}\,.
$$
In turn we find
\begin{equation*}
\int_{0}^{\frac{L}{2}-x_\e} \biggl( \e|v_\e'|^2 + \frac{C(1-v_\e)^2}{4\e}\biggr)\dd x \leq \e (1-s_\e)\| v_\e'\|_\infty \to 0 \qquad\text{as }\e\to0
\end{equation*}
in view of the bound $\e\|v_\e'\|_\infty\leq 1$ for $\e$ small in Lemma~\ref{lemma:compactness}. By symmetry of $v_\e$ with respect to the midpoint $\frac{L}{2}$ we can conclude that
\begin{equation} \label{proof:c0d0}
\lim_{\e\to0}\int_{(0,L)\setminus A_\e} \biggl( \e|v_\e'|^2 + \frac{(1-v_\e)^2}{4\e}\biggr)\dd x = 0.
\end{equation}
In turn using \eqref{eq:conv-ueps}
\begin{equation*}
\lim_{\e\to0}\Feps(u_\e,v_\e; A_\e^c) = \lim_{\e\to0}\int_{(0,L)\setminus A_\e} f_\e^2(v_\e) |u_\e'|^2 \dd x = c_0^2L = \int_{0}^L |u'|^2\dd x
\end{equation*}
proving the first part of the statement.

To show that $c_0^2+d_0=0$, fix any $x_0\in(0,\frac{L}{2})$ and notice that $(0,x_0)\subset A_\e^c$ for $\e$ small, since $x_\e\to0$ by Lemma~\ref{lemma:stima-tempi}. Then, using the uniform convergence of $u_\e'$ to $c_0$ (see \eqref{eq:conv-ueps}) we find
\begin{align*}
|c_0^2 + d_0|x_0
= \lim_{\e\to0} \int_0^{x_0}| c_\e u_\e' + d_\e |\dd x
\xupref{eq:first-int}{=} \lim_{\e\to0} \int_0^{x_0}\bigg| \frac{(1-v_\e)^2}{4\e} -\e (v_\e')^2 \bigg|\dd x
\xupref{proof:c0d0}{=}0,
\end{align*}
which completes the proof of the lemma.
\end{proof}

We are now ready to give the proof of Proposition~\ref{prop:case1b}.

\begin{proof}[Proof of Proposition~\ref{prop:case1b}.]
We first prove that the limit function $u$ satisfies $|Du|(0,L)=a$. Fix $\delta>0$ such that $u_\e(\frac{L}{2}\pm\delta)\to u(\frac{L}{2}\pm\delta)$ and denote by $I_\delta\defeq(0,L)\setminus(\frac{L}{2}-\delta,\frac{L}{2}+\delta)$. We have $I_\delta\subset A_\e^c$ for $\e$ small enough, since $x_\e\to0$ by Lemma~\ref{lemma:stima-tempi}; hence by \eqref{proof:c0d0}
\begin{align*}
0
& = \lim_{\e\to0}\int_{I_\delta} \Bigl( \frac{(1-v_\e)^2}{4\e} - \e|v_\e'|^2 \Bigr) \dd x \\
& = \lim_{\e\to0} \int_{I_\delta}\bigl(c_\e u_\e'+d_\e\bigr)\dd x \\
& = c_0 \lim_{\e\to0}\bigl(u_\e(L)-u_\e(\midp+\delta) + u_\e(\midp-\delta)-u_\e(0)\bigr) + d_0(L-2\delta) \\
& = c_0 \bigl(a-u(\midp+\delta) + u(\midp-\delta)\bigr) - c_0^2\bigl(L-2\delta\bigr),
\end{align*}
where the second equality follows by \eqref{eq:first-int} and \eqref{eq:ceps}, and the last one by Lemma~\ref{lemma:case1b}. Hence by letting $\delta\to0$  we find $a = c_0L + [u](\frac{L}{2})=\int_0^Lu'\dd x + [u](\frac{L}{2})=|Du|(0,L)$.

To conclude the proof, it remains to show the criticality identity \eqref{eq:case1b} and the convergence of the energies \eqref{eq:conv-energy}.
We consider a blow-up of the functions $u_\e$ and $v_\e$ around the midpoint: let $\tilde{u}_\e(t)\defeq u_\e(\frac{L}{2}+\e t)$, $\tilde{v}_\e(t)\defeq v_\e(\frac{L}{2}+\e t)$ for $t\in(-\frac{L}{2\e},\frac{L}{2\e})$. The idea of the proof is to show that the pair $(\tilde{u}_\e,\tilde{v}_\e)$ converges to an optimal pair $(\alpha_{s_0},\beta_{s_0})$ for the minimum problem \eqref{def:g} which defines $g(s_0)$, with $s_0=[u](\frac{L}{2})$. We refer to Section~\ref{sect:g}, and in particular to Proposition~\ref{prop:g2a} and Proposition~\ref{prop:g2b}, for the existence and the main properties of optimal pairs for $g$.

We first remark that for all $T>0$ and all $\e$ sufficiently small
\begin{equation} \label{eq:bound-blowup}
\int_{-T}^T \Bigl( \frac{(1-\tilde{v}_\e)^2}{4} + |\tilde{v}_\e'|^2 \Bigr) \dd t = \int_{\frac{L}{2}-\e T}^{\frac{L}{2}+\e T} \Bigl(\frac{(1-v_\e)^2}{4\e} + \e|v_\e'|^2 \Bigr) \dd x \leq C,
\end{equation}
for a constant $C>0$ independent of $\e$ and $T$, by the uniform bound \eqref{eq:bound-energy}. Therefore, up to extracting a subsequence, we have that $\tilde{v}_\e\wto \tilde{v}$ weakly in $H^1_\loc(\R)$, for some function $\tilde{v}$ with $1-\tilde{v}\in H^1(\R)$, and the convergence is also uniform on compact sets.

Furthermore, the function $\tilde{v}_\e$ solves the initial value problem \eqref{eq:blowup} in $(-\frac{x_\e}{\e},\frac{x_\e}{\e})$, where $t_\e\defeq\frac{x_\e}{\e}\to+\infty$ as $\e\to0$ by Lemma~\ref{lemma:stima-tempi}. For every $T>0$ and every test function $\vphi\in\mathrm{C}^\infty_{\mathrm c}(-T,T)$, since $(-T,T)\subset(-\frac{x_\e}{\e},\frac{x_\e}{\e})$ for all $\e$ small enough we can pass to the limit in the weak formulation of \eqref{eq:blowup}:
\begin{equation*}
0 = \int_{-T}^T \biggl( \tilde{v}_\e'\vphi' + \frac{c_\e^2 f'(\tilde{v}_\e)}{f^3(\tilde{v}_\e)}\vphi + \frac{\tilde{v}_\e-1}{4}\vphi\biggr)\dd t
\longrightarrow
\int_{-T}^T \biggl( \tilde{v}'\vphi' + \frac{c_0^2 f'(\tilde{v})}{f^3(\tilde{v})}\vphi + \frac{\tilde{v}-1}{4}\vphi\biggr)\dd t
\end{equation*}
where the convergence is justified since $\tilde{v}_\e\wto \tilde{v}$ weakly in $H^1(-T,T)$ and uniformly on $[-T,T]$, and $\tilde{v}\geq\tilde{v}(0)=m_0>0$. In conclusion, we obtained that the limit function $\tilde{v}$ is a weak solution to the equation
\begin{equation} \label{proof:vtilde}
\begin{split}
& \tilde{v}'' = \frac{c_0^2f'(\tilde{v})}{f^3(\tilde{v})}+\frac{\tilde{v}-1}{4} \qquad\text{in }\R, \\[5pt]
& \tilde{v}(0) = m_0, \quad \tilde{v}'(0) = 0.
\end{split}
\end{equation}
Notice that the right-hand side of \eqref{proof:vtilde} is a continuous function, and therefore $\tilde{v}\in\mathrm{C}^2(\R)$ and the equation holds in the classical sense; moreover $\tilde{v}'(0)=0$ since the origin is a minimum point of $\tilde{v}$.

Consider now the map $s\mapsto \mbar_s$ defined in Proposition~\ref{prop:g2b}, which associates to every $s\in[0,+\infty)$ the minimum value of the optimal profile $\beta_s$ for $g(s)$. This map is a continuous bijection between $(0,\sfrac)$ and $(0,1)$: in particular, since $m_0\in(0,1)$, we have that there exists $s_0\in(0,\sfrac)$ such that $m_0=\mbar_{s_0}$. By \eqref{eq:opt-prof-1} and \eqref{eq:opt-prof-2} the optimal profile $\beta_{s_0}$ for $g(s_0)$ solves
\begin{equation} \label{proof:betas}
\begin{split}
& \beta_{s_0}'' = \frac{\sigma_{s_0}^2f'(\beta_{s_0})}{f^3(\beta_{s_0})}+\frac{\beta_{s_0}-1}{4} \qquad\text{in }\R, \\[5pt]
& \beta_{s_0}(0) = \mbar_{s_0}, \quad \beta_{s_0}'(0) = 0,
\end{split}
\end{equation}
where by \eqref{eq:ms-1} and \eqref{eq:c0-2} the constant $\sigma_{s_0}$ is given by
\begin{equation} \label{proof:c0}
\sigma_{s_0}=\frac12 (1-\mbar_{s_0}) f(\mbar_{s_0}) = \frac12(1-m_0)f(m_0) = c_0.
\end{equation}
Therefore by comparing \eqref{proof:vtilde} and \eqref{proof:betas} we conclude, by uniqueness, that it must be
\begin{equation} \label{proof:vtilde-betas}
\tilde{v}=\beta_{s_0}.
\end{equation}

In order to obtain the criticality condition \eqref{eq:case1b}, it is now sufficient to show that $s_0=[u](\frac{L}{2})$: indeed in this case we would have by Proposition~\ref{prop:g3}
$$
g'([u]({\textstyle\frac{L}{2}})) = g'(s_0) = (1-\mbar_{s_0})f(\mbar_{s_0}) \xupref{proof:c0}{=} 2c_0.
$$
Therefore we now prove that $s_0=[u](\frac{L}{2})$.

By using the properties of the optimal pair $(\alpha_{s_0},\beta_{s_0})$ in Proposition~\ref{prop:g2a}, we have
\begin{align*}
s_0
& = \int_{\R}\alpha_{s_0}'\dd t
\xupref{eq:opt-prof-2}{=} \int_{\R}\frac{\sigma_{s_0}}{f^2(\beta_{s_0})}\dd t
\xupref{proof:vtilde-betas}{=} \int_{\R}\frac{c_0 }{f^2(\tilde{v})} \dd t \\
& = \sup_{T>0} \int_{-T}^T \frac{c_0}{f^2(\tilde{v})}\dd t
= \sup_{T>0} \, \lim_{\e\to0}\int_{-T}^{T} \frac{c_\e}{f^2(\tilde{v}_\e)}\dd t
= \sup_{T>0} \, \lim_{\e\to0}\int_{\frac{L}{2}-\e T}^{\frac{L}{2}+\e T} \frac{c_\e}{\e f^2(v_\e)}\dd x \\
& \xupref{eq:ceps}{=} \sup_{T>0} \, \lim_{\e\to0}\int_{\frac{L}{2}-\e T}^{\frac{L}{2}+\e T} u_\e' \dd x
= \sup_{T>0} \, \lim_{\e\to0} \bigl( u_\e(\midp+\e T) - u_\e(\midp-\e T) \bigr)
\leq u(\midp+\delta) - u(\midp-\delta)
\end{align*}
for every $\delta>0$ such that $u_\e({\textstyle\frac{L}{2}}\pm\delta)\to u({\textstyle\frac{L}{2}}\pm\delta)$, since $u_\e$ is monotone nondecreasing. By letting $\delta\to0$ we obtain $s_0\leq [u](\frac{L}{2})$.

We next show the opposite inequality $s_0\geq [u](\frac{L}{2})$. For $\delta>0$ as above we have
\begin{align}\label{[u]<s0}	
u(\midp+\delta) - u(\midp-\delta)&=\lim_{\e\to0}(u_\e(\midp+\delta)-u_\e(\midp-\delta))=	
\lim_{\e\to0}\int_{\midp-\delta}^{\midp+\delta}u'_\e \dd x\nonumber\\
&=	\lim_{\e\to0}\int_{\midp-\delta}^{\midp+\delta}\frac{c_\e}{f^2_\e(v_\e)} \dd x\leq\lim_{\e\to0}\int_{\midp-x_\e}^{\midp+x_\e}\frac{c_\e}{f^2_\e(v_\e)} \dd x+2\delta c_0\nonumber\\
&= \lim_{\e\to0}2\int_{0}^{\frac{x_\e}{\e}}\frac{c_\e}{f^2(\tilde v_\e)} \dd t+2\delta c_0=\lim_{\e\to0}2c_0\int_{m_\e}^{s_\e}\frac{\dd s}{f^2(s)\sqrt{\Psi_\e(s)}} +2\delta c_0\nonumber\\
&\leq 2c_0\int_{m_0}^{1}\frac{\dd s}{f^2(s)\sqrt{\Psi_0(s)}} +2\delta c_0
\xupref{eq:ms-3}{=}s_0+2\delta c_0,
\end{align}	
where in the second line we have used \eqref{eq:ceps} and $f_\e(v_\e)\geq f_\e(s_\e)$ in $(\midp-x_\e,\midp+x_\e)^c$, with $f_\e(s_\e)\to1$ and $x_\e\to0$ by Lemma~\ref{lemma:stima-tempi}; in the third line we have used
\[(\tilde v'_\e)^2=\Psi_\e(\tilde v_\e)\]
with
\[\Psi_\e(s)\coloneqq\frac{1}{4f^2(s)}\biggl((1-s)^2f^2(s)-(1-m_\e)^2f^2(m_\e)+4\e |d_\e|(f^2(s)-f^2(m_\e))\biggr),\quad s\in(m_0,1),\]
which comes from \eqref{eq:first-integral2}, \eqref{eq:first-int-min}, and \eqref{eq:deps-negative}; and in the fourth line we have set
\[\Psi_0(s)\coloneqq\frac{1}{4f^2(s)}\biggl((1-s)^2f^2(s)-(1-m_0)^2f^2(m_0))\biggr),\quad s\in(m_0,1).\]
As $\delta\to 0$, we get $[u](\midp)\leq s_0$.

Hence $s_0=[u](\frac{L}{2})$ which in turn yields, as we have seen before, that \eqref{eq:case1b} holds. The only missing point to complete the proof of Proposition~\ref{prop:case1b} is the convergence of the energies \eqref{eq:conv-energy}. However, this follows immediately by combining Lemma~\ref{lemma:case1b} with the computation below, which is based on the same arguments used in \eqref{[u]<s0}:
\begin{align*}
& \lim_{\e\to 0}\int_{A_\e} \Bigl( f_\e^2(v_\e) |u_\e'|^2 + \frac{(1-v_\e)^2}{4\e} + \e|v_\e'|^2 \Bigr) \dd x
 \xupref{eq:first-int}{=} \lim_{\e\to0} \biggl( 2\int_{A_\e}\frac{(1-v_\e)^2}{4\e}\dd x - \int_{A_\e}d_\e\dd x \biggr) \nonumber\\
& = \lim_{\e\to0} \; 2\int_{-\frac{x_\e}{\e}}^{\frac{x_\e}{\e}}\frac{(1-\tilde{v}_\e)^2}{4}\dd t
 = \lim_{\e\to0} \; \int_{m_\e}^{s_\e}\frac{(1-s)^2}{\sqrt{\Psi_\e(s)}}\dd s
 = \int_{m_0}^{1}\frac{(1-s)^2}{\sqrt{\Psi_0(s)}}\dd s 
 \xupref{eq:ms-2}{=} g(s_0)=g([u](\midp)),
\end{align*}
where in the fourth equality we have used dominated convergence. This is allowed since, setting $\tilde f(s)\defeq(1-s)f(s)$, which is monotone increasing by assumption \ref{ass:f3}, we have
$$
\frac{(1-s)^2}{\sqrt{\Psi_\e(s)}}
\leq \frac{2\tilde{f}(s)(1-s)}{\sqrt{\tilde{f}^2(s)-\tilde{f}^2(m_\e)}}
\leq \frac{2\tilde{f}(s)(1-s)}{\sqrt{2\tilde{f}(m_\e)}\sqrt{\tilde{f}'(\zeta_\e(s))(s-m_\e)}} \,,
$$
for all $s\in(m_\e,s_\e)$ and some $\zeta_\e(s)\in(m_\e,s)$. Since $\zeta_\e(s)\geq m_\e$ and $\inf_{\e}m_\e>0$, in view of assumptions \ref{ass:f3} and \ref{ass:f5} we have that $\tilde{f}'(\zeta_\e(s)) \geq C(1-\zeta_\e(s))^3 \geq C(1-s)^3$ for all $s\in(m_\e,s_\e)$ and for a constant $C>0$ independent of $\e$. Therefore we find for another constant $C_1>0$ independent of $\e$
$$
\frac{(1-s)^2}{\sqrt{\Psi_\e(s)}} \leq \frac{C_1}{\sqrt{1-s}\sqrt{s-m_\e}}
$$
which allows to apply the (generalized) dominated convergence theorem, as required. This concludes the proof.
\end{proof}


\subsection{Case II: complete fracture} \label{subsec:fracture}
We next show that if $m_0=0$ then the limit function $u$ is a critical point of the cohesive energy \eqref{def:F} describing a completely fractured state, namely $u$ has a single jump at $\frac{L}{2}$ and is constant elsewhere.

\begin{proposition} \label{prop:case1a}
Assume that $m_0=0$. Then necessarily $\sfrac\in\R$ and $a=\sfrac$, and $u(x)=a\chi_{(\frac{L}{2},L)}(x)$. Furthermore the convergence of the energies \eqref{eq:conv-energy} holds.
\end{proposition}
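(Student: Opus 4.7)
The plan is to identify $u$ as a pure jump concentrated at $\midp$, to then derive an integral identity for the jump amplitude, and finally to identify this amplitude with $\sfrac$ via a singular asymptotic analysis. Since $m_\eps\to 0<s_\eps$ we have $f_\eps(m_\eps)=\sqrt\eps f(m_\eps)\to 0$, so Lemma~\ref{lemma:c0} gives $c_0=\tfrac12(1-0)f(0)=0$. Lemma~\ref{lemma:stima-tempi} then yields $u\in\SBV(0,L)$ with $u'\equiv 0$ a.e.\ and $J_u\subset\{\midp\}$, and \eqref{eq:conv-ueps} gives $u_\eps'\to 0$ uniformly on $(\midp-\delta,\midp+\delta)^c$ for every $\delta>0$. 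Combined with the boundary conditions $u_\eps(0)=0$, $u_\eps(L)=a_\eps\to a$, this forces $u_\eps\to 0$ uniformly on $(0,\midp-\delta)$ and $u_\eps\to a$ uniformly on $(\midp+\delta,L)$; hence $u=a\chi_{(\midp,L)}$ and $[u](\midp)=a$.

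Next, I would extract from \eqref{eq:first-integral2} the identity $\eps v_\eps'=\sqrt{(1-v_\eps)^2/4-c_\eps^2/f^2(v_\eps)-\eps d_\eps}$ on $(\midp,\midp+x_\eps)\subset A_\eps$ (where $f_\eps(v_\eps)=\sqrt\eps f(v_\eps)$ and $v_\eps'>0$). Combining with $u_\eps'=c_\eps/f_\eps^2(v_\eps)$ and the change of variable $s=v_\eps(x)$ yields
\begin{equation*}
I_\eps\defeq \int_{m_\eps}^{s_\eps}\frac{2c_\eps\,ds}{f(s)\sqrt{A_\eps(s)}}=u_\eps(\midp+x_\eps)-u_\eps(\midp),
\end{equation*}
where $A_\eps(s)=\tilde f^2(s)-\tilde f^2(m_\eps)+\ell_\eps(f^2(s)-f^2(m_\eps))$, with $\tilde f(s)=(1-s)f(s)$ and $\ell_\eps\defeq-4\eps d_\eps\geq 0$ (using \eqref{eq:deps-negative}). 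Symmetry of $u_\eps'$ around $\midp$ gives $u_\eps(\midp)=a_\eps/2\to a/2$, and on $(\midp+x_\eps,\midp+\delta)$ the bound $u_\eps'\leq c_\eps/f_\eps^2(s_\eps)\to 0$ yields $u_\eps(\midp+x_\eps)\to a$; therefore $I_\eps\to a/2$.

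The main obstacle is showing $\lim_\eps I_\eps=\sfrac/2$: the integral $I_\eps$ is a perturbation of the right-hand side of \eqref{eq:ms-3}, spoiled both by the truncation at $s_\eps<1$ and by the $\ell_\eps$-correction, which does not vanish uniformly in $s$ (close to $s_\eps$ it stays bounded, since $\ell_\eps f^2(s_\eps)\sim -4d_\eps$). To isolate the dominant contribution I would use the substitution $s=m_\eps e^\xi$ in a region $s\in(m_\eps,\eta)$: using $f(s)\sim f'(0)s$ as $s\to 0$, the integrand becomes $(1+o(1))(f'(0)\sqrt{e^{2\xi}-1})^{-1}\,d\xi$ uniformly on bounded $\xi$-intervals, and $\int_0^\infty(e^{2\xi}-1)^{-1/2}d\xi=\pi/2$ by the substitution $u=e^\xi$; hence this region contributes $\pi/(2f'(0))$ in the limit (first $\eps\to 0$, then $\eta\to 0$). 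The complementary region $s\in(\eta,s_\eps)$ contributes at most $C_\eta c_\eps\to 0$ since $A_\eps$ is bounded below away from $m_\eps$. Combined with Proposition~\ref{prop:sfrac} this gives $I_\eps\to \sfrac/2$, so the finiteness of $a$ forces $f'(0)>0$, i.e., $\sfrac\in\R$, and $a=\sfrac$. For the convergence of energies, $F_\eps(u_\eps,v_\eps;A_\eps^c)\to 0$ follows from the argument of Lemma~\ref{lemma:case1b}, whose only nontrivial input is $4c_0^2/\stress^2<1$, trivially satisfied here ($c_0=0$); while from \eqref{eq:first-int} and symmetry
\begin{equation*}
F_\eps(u_\eps,v_\eps;A_\eps)=\int_{A_\eps}\frac{(1-v_\eps)^2}{2\eps}\,dx-d_\eps|A_\eps|=\int_{m_\eps}^{s_\eps}\frac{2(1-s)^2f(s)}{\sqrt{A_\eps(s)}}\,ds+o(1),
\end{equation*}
the analogue of \eqref{eq:ms-2}. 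The same two-regime asymptotic analysis (pointwise limit of the integrand being $2(1-s)$) gives $F_\eps(u_\eps,v_\eps;A_\eps)\to \int_0^1 2(1-s)\,ds=1=g(\sfrac)$, and therefore $F_\eps(u_\eps,v_\eps)\to 1=\Phi(u)=\F(u,1)$.
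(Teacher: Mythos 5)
Your proof is correct and arrives at the same conclusions via the same pivotal identity (the change of variables $s=v_\eps(x)$ turning $\int u_\eps'$ and $\int(1-v_\eps)^2/(2\eps)$ over $A_\eps$ into explicit integrals over $(m_\eps,s_\eps)$ with denominator $\sqrt{A_\eps(s)}$), but the way you evaluate those integrals differs from the paper. For the jump amplitude, the paper squeezes $[u](\midp)$ between $\pi\inf_{(0,\delta)}(1/f')$ and $\pi\sup_{(0,\delta)}((1-s)/\tilde f'(s))$ via the mean value theorem and an exact $\arctan$ primitive — essentially re-running the proof of Proposition~\ref{prop:sfrac} inside the estimate; you instead extract the singular contribution by the substitution $s=m_\eps e^\xi$ together with the first-order Taylor expansion $f(s)=f'(0)s(1+O(s))$, reducing to $\int_0^\infty(e^{2\xi}-1)^{-1/2}\,d\xi=\pi/2$, and then invoke Proposition~\ref{prop:sfrac} as a black box. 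For the energy, the paper uses the AM--GM trick $\int\bigl(\tfrac{1-v_\eps}{2\sqrt\eps}-\sqrt\eps|v_\eps'|\bigr)^2\to0$ (which relies on $d_0=0$ from Lemma~\ref{lemma:case1b}) together with the telescoping $\int_0^L(1-v_\eps)|v_\eps'|\,dx=(1-m_\eps)^2\to1$; you instead reuse the integral representation $\int_{m_\eps}^{s_\eps}2(1-s)^2f(s)/\sqrt{A_\eps(s)}\,ds$, which mirrors the paper's own treatment of the pre-fractured case in Proposition~\ref{prop:case1b} and so gives a more uniform presentation. Your route also bypasses the paper's computation of the blow-up profile $\tilde v(t)=1-e^{-|t|/2}$ and the argument that $\{\tilde v=0\}=\{0\}$, which are in any case not used in the paper's final integral computation. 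Two small points to tighten if you write this up: (a) the expansion of the integrand is valid with a multiplicative error $1+O(\eta)$ uniformly over the whole range $\xi\in(0,\log(\eta/m_\eps))$ — not merely ``on bounded $\xi$-intervals'' — because the error is controlled by $O(s)$ with $s\le\eta$, and it is precisely this uniformity that lets you pass $\eps\to0$ first; (b) for the energy integral the ``two-regime'' step is not dominated convergence alone: the integrand is singular at $s=m_\eps$, so one must show the contribution over $(m_\eps,\eta)$ is $O(\eta)$ uniformly in $\eps$ (which follows from $A_\eps(s)\ge\tilde f^2(s)-\tilde f^2(m_\eps)\approx(f'(0))^2(s^2-m_\eps^2)$ for small $s$, giving $\int_{m_\eps}^\eta\approx2\sqrt{\eta^2-m_\eps^2}\le2\eta$), and then apply dominated convergence on $(\eta,s_\eps)$ before letting $\eta\to0$.
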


\begin{proof}
The proof is in part similar to that of Proposition~\ref{prop:case1b}. By Lemma~\ref{lemma:c0} we have $c_0=0$. We can also apply Lemma~\ref{lemma:stima-tempi} to deduce that $A_\e\defeq\{v_\e<s_\e\}=(\midp-x_\e,\midp+x_\e)$ with $x_\e\to0$ and $\frac{x_\e}{\e}\to+\infty$. Moreover, by the same lemma we have $u\in\SBV(0,L)$ with $J_u\subset\{\midp\}$ and $u'=c_0=0$ almost everywhere in $(0,L)$, with uniform convergence $u_\e'\to 0$ on compact sets not containing $\midp$ by \eqref{eq:conv-ueps}. We can also repeat word by word the proof of Lemma~\ref{lemma:case1b}, so that
\begin{equation} \label{proof:case1a-1}
\lim_{\e\to0}\Feps(u_\e,v_\e;A_\e^c) = 0, \qquad c_0^2+d_0=0
\end{equation}
(hence $d_0=0$).

For any $x\in(0,\midp)$ we have $u_\e(x)=\int_0^x u_\e'\dd t \to c_0x=0$, whereas for $x\in(\midp,L)$ we have $u_\e(x)=u_\e(L)-\int_x^L u_\e'\dd t \to a - c_0(L-x)=a$. Therefore the limit function is $u(x)=a\chi_{(\midp,L)}(x)$ with a jump at the midpoint of amplitude $[u](\midp)=a$.

We now claim that 
\begin{equation} \label{proof:case1a-2}
\lim_{\e\to0}\Feps(u_\e,v_\e) = 1.
\end{equation}
We first observe that by \eqref{eq:ceps} and the Dirichlet boundary condition \eqref{def:cp3}
\begin{equation} \label{proof:case1a-3}
\lim_{\e\to0}\int_0^L f_\e^2(v_\e)(u_\e')^2\dd x = \lim_{\e\to0} c_\e\int_0^L u_\e'\dd x = \lim_{\e\to0} c_\e a_\e = 0.
\end{equation}
By \eqref{eq:first-int} we have
\begin{equation*}
\lim_{\e\to0}\int_0^L \bigg| \frac{(1-v_\e)^2}{4\e} - \e(v_\e')^2\bigg| \dd x
= \lim_{\e\to0}\int_0^L \big| c_\e u_\e' +d_\e\big|\dd x
\leq \lim_{\e\to0} \bigl( c_\e a_\e + |d_\e|L \bigr) = 0
\end{equation*}
(where we used in particular that $c_0=0$, $d_0=0$), and in turn it follows that
\begin{align*}
\lim_{\e\to0}\int_0^L \biggl( \frac{(1-v_\e)^2}{4\e} +\e (v_\e')^2 - (1-v_\e)|v_\e'|\biggr)\dd x 
& = \lim_{\e\to0} \int_0^L \biggl( \frac{1-v_\e}{2\sqrt{\e}} - \sqrt{\e}|v_\e'| \biggr)^2 \dd x \\
& \leq \lim_{\e\to0}\int_0^L \bigg| \frac{(1-v_\e)^2}{4\e} -\e (v_\e')^2\bigg|\dd x = 0.
\end{align*}
Therefore, combining this equation and \eqref{proof:case1a-3}, we find
\begin{equation*}
\begin{split}
\lim_{\e\to0}\Feps(u_\e,v_\e)
& = \lim_{\e\to 0} \int_0^L f_\e^2(v_\e)(u_\e')^2 \dd x + \int_0^L \bigg( \frac{(1-v_\e)^2}{4\e} +\e (v_\e')^2\bigg) \dd x \\
& = \lim_{\e\to0} \int_0^L (1-v_\e)|v_\e|'\dd x 
= \lim_{\e\to0} 2 \int_0^{\midp} (1-v_\e)(-v_\e')\dd x \\
& = \lim_{\e\to0} (1-v_\e(\midp))^2 = \lim_{\e\to0} (1-m_\e)^2 = 1,
\end{split}
\end{equation*}
which proves the claim \eqref{proof:case1a-2}.

To conclude the proof, we need to show that
\begin{equation} \label{proof:case1a-4}
[u](\midp) = \sfrac
\end{equation}
(and, in particular, that $\sfrac$ is finite).
Indeed, recalling that $u$ is piecewise constant, in this case we have $\F(u,1)=g([u](\midp))=1$, and therefore \eqref{proof:case1a-2} gives also the convergence of the energy. The rest of the proof is therefore devoted to showing \eqref{proof:case1a-4}. 

Let $\tilde u_\e(t)\defeq u_\e(\midp+\e t)$, $\tilde v_\e(t) \defeq v_\e(\midp+\e t)$, for $t\in (-\frac{x_\e}{\e},\frac{x_\e}{\e})$. We first check that $\tilde v_\e(t)\rightharpoonup 1-e^{-|t|/2}$ weakly in $H^1_{\loc}(\R)$ and uniformly on compact sets. Indeed, as in \eqref{eq:bound-blowup} we have that $(1-\tilde{v}_\e)$ is uniformly bounded in $H^1(-T,T)$ for all fixed $T>0$, so that $\tilde v_\e$ converges weakly in $H^1_{\loc}(\R)$ and uniformly on compact sets to some function $\tilde v$ as $\e\to0$, with $1-\tilde v\in H^1(\R)$. Also, $\tilde v(0)=\lim_\e \tilde v_\e(0)=\lim_\e m_\e=0$.
We now check that $\{\tilde v=0\}=\{0\}$. By the properties of $v_\e$, we have $\{\tilde v=0\}=[-\tilde x,\tilde x]$, for some $\tilde x\geq 0$. Recalling \eqref{proof:case1a-2} and that $\frac{x_\e}{\e}\to+\infty$ by Lemma~\ref{lemma:stima-tempi}, we have
\begin{equation*}
1=\lim_{\e\to0}\Feps(u_\e,v_\e) \geq
\liminf_{\e\to0}\int_{-\frac{x_\e}{\e}}^{\frac{x_\e}{\e}}\biggl(\frac{(1-\tilde v_\e)^2}{4} + (\tilde v_\e')^2\biggr) \dd t
\geq \int_{-T}^T\biggl( \frac{(1-\tilde v)^2}{4} +(\tilde v')^2\biggr) \dd t
\end{equation*}
for all $T>0$, so that
\begin{equation*}
1 \geq \int_{\R}\bigg( \frac{(1-\tilde v)^2}{4} +(\tilde v')^2\bigg) \dd t
\geq \frac{\tilde x}{2} + \int_{\R\setminus (-\tilde x,\tilde x)} \bigg(\frac{(1-\tilde v)^2}{4}+(\tilde v')^2\bigg) \dd t\geq \frac{\tilde x}{2}+1.
\end{equation*}
This implies $\tilde x=0$ and in turn $\{\tilde v=0\}=\{0\}$.

By writing in weak form the equation \eqref{eq:blowup} satisfied by $\tilde v_\e$ and passing to the limit as $\e\to0$ we get for all $\varphi\in C^\infty_{\mathrm{c}}(\R\setminus \{0\})$
\begin{equation*}
0=\int_{\mathrm{supp}(\varphi)}\bigg(\tilde v_\e'\varphi' + \frac{c_\e^2f'(\tilde v_\e)}{f^3(\tilde v_\e)}\varphi+\frac{\tilde v_\e-1}{4}\varphi\bigg)\dd t
\to\int_{\text{supp}(\varphi)}\bigg(\tilde v'\varphi'+\frac{\tilde v-1}{4}\varphi\bigg)\dd t,
\end{equation*}
where we used the weak $L^2$-convergence of $\tilde{v}_\e'$ for the first term and the uniform convergence for the second and the third term.
Together with $\tilde v(0)=0$ and $\tilde v(+\infty)=1$, this implies $\tilde v(t)=1-e^{-|t|/2}$.

We further recall that by changing variables in \eqref{eq:first-integral2}, using \eqref{eq:first-int-min} and since $d_\e<0$ by \eqref{eq:deps-negative}, we have in $(-\frac{x_\e}{\e},\frac{x_\e}{\e})$
\begin{equation}\label{eqnvPsi}
\begin{split}
(\tilde v_\e')^2
& = \frac{1}{4f^2(\tilde v_\e)} \biggl((1-\tilde v_\e)^2f^2(\tilde v_\e)-4c_\e^2-4\e d_\e  f^2(\tilde v_\e) \biggr) \\
& =\frac{1}{4f^2(\tilde v_\e)}\biggl((1-\tilde v_\e)^2f^2(\tilde v_\e)-(1-m_\e)^2f^2(m_\e)+4\e |d_\e|(f^2(\tilde v_\e)-f^2(m_\e))\biggr)\eqqcolon\Psi_\e(\tilde v_\e).
\end{split}
\end{equation}

Let us now compute $[u](\midp)$. We have
\begin{equation} \label{eq:jump}
\begin{split}
[u](\midp)
& = a =\lim_{\e\to0}\int_0^L u_\e'\dd x
\xupref{eq:ceps}{=} \lim_{\e\to0} 2\int_{\frac{L}{2}}^L \frac{c_\e}{f_\e^2(v_\e)}\dd x
= \lim_{\e\to0} 2\int^{\frac{L}{2}+x_\e}_{\frac{L}{2}} \frac{c_\e}{f_\e^2(v_\e)}\dd x\\
&= \lim_{\e\to0} 2\int_0^{\frac{x_\e}{\e}} \frac{c_\e}{f^2(\tilde v_\e)}\dd t
\xupref{eqnvPsi}{=} \lim_{\e\to0} 2c_\e\int_0^{\frac{x_\e}{\e}} \frac{\tilde v'_\e}{f^2(\tilde v_\e)\sqrt{\Psi_\e(\tilde v_\e)}}\dd t\\
&  = \lim_{\e\to0}2c_\e\int_{m_\e}^{s_\e} \frac{1}{f^2(s)\sqrt{\Psi_\e(s)}}\dd s,	
\end{split}
\end{equation}	
where in the fourth equality we have used that $f_\e(v_\e)\geq f_\e(v_\e(L/2+x_\e))\to 1$ in $A_\e^c$ and that $c_\e\to0$, and the last passage follows by a change of variables. Fix now any $\delta\in(0,1)$. Since $m_\e < \delta < s_\e$ for $\e$ small enough, we have by the definition of $\Psi_\e$ that
\begin{equation*}
\begin{split}
0 \leq \lim_{\e\to0} 2c_\e \int_{\delta}^{s_\e} \frac{\dd s}{f^2(s)\sqrt{\Psi_\e(s)}}
& \leq \lim_{\e\to0} 4c_\e \int_{\delta}^{s_\e} \frac{\dd s}{f(s)\sqrt{(1-s)^2f^2(s)-(1-m_\e)^2f^2(m_\e)}} \\
& \leq \lim_{\e\to0} \frac{4c_\e(s_\e-\delta)}{f(\delta)\sqrt{(1-\delta)^2f^2(\delta)-(1-m_\e)^2f^2(m_\e)}} = 0
\end{split}
\end{equation*}
where we used the monotonicity of the map $s\mapsto(1-s)f(s)$ given by assumption \ref{ass:f3}.	
Therefore by \eqref{eq:jump} we see that for all $\delta\in(0,1)$
\begin{equation} \label{eq:jump2}
[u](\midp) = a = \lim_{\e\to0}2c_\e\int_{m_\e}^{\delta} \frac{1}{f^2(s)\sqrt{\Psi_\e(s)}}\dd s.
\end{equation}
By inserting the definition of $\Psi_\e$, we find
\begin{align*}
[u](\midp)
&= \lim_{\e\to0}4c_\e\int_{m_\e}^{\delta} \frac{\dd s}{f(s)\sqrt{(1-s)^2f^2(s)-(1-m_\e)^2f^2(m_\e)+4\e |d_\e|(f^2(s)-f^2(m_\e))}}\\
&\geq \lim_{\e\to0}\frac{4c_\e}{\sqrt{(1-m_\e)^2+4\e |d_\e|}}\int_{m_\e}^{\delta} \frac{\dd s}{f(s)\sqrt{f^2(s)-f^2(m_\e)}}\\
&\geq \lim_{\e\to0}2f(m_\e)\inf_{s\in(m_\e,\delta)}\bigg(\frac{1}{f'(s)}\bigg) \int_{m_\e}^{\delta} \frac{f'(s)}{f(s)\sqrt{f^2(s)-f^2(m_\e)}}\dd s\\ 
&= \lim_{\e\to0}2f(m_\e)\inf_{s\in(m_\e,\delta)}\bigg(\frac{1}{f'(s)}\bigg) \int_{f(m_\e)}^{f(\delta)} \frac{\dd t}{t\sqrt{t^2-f^2(m_\e)}} \\ 
&= \lim_{\e\to0}2\inf_{s\in(m_\e,\delta)}\bigg(\frac{1}{f'(s)}\bigg) \arctan\bigg(\frac{\sqrt{f^2(\delta)-f^2(m_\e)}}{f(m_\e)}\bigg)
=\pi\inf_{s\in(0,\delta)}\bigg(\frac{1}{f'(s)}\bigg).
\end{align*}
Similarly, again by \eqref{eq:jump2} and using the definition of $\Psi_\e$, and denoting by $\tilde{f}(s)\defeq(1-s)f(s)$, we have
\begin{align*}
[u](\midp)
 &= \lim_{\e\to0}4c_\e\int_{m_\e}^{\delta} \frac{\dd s}{f(s)\sqrt{(1-s)^2f^2(s)-(1-m_\e)^2f^2(m_\e)+4\e |d_\e|(f^2(s)-f^2(m_\e))}}\\
 &\leq \lim_{\e\to0}4c_\e \int_{m_\e}^{\delta} \frac{\dd s}{f(s)\sqrt{(1-s)^2f^2(s)-(1-m_\e)^2f^2(m_\e)}}\\
 & = \lim_{\e\to0} 4c_\e \int_{m_\e}^{\delta} \frac{\tilde{f}'(s)}{\tilde{f}(s)\sqrt{\tilde{f}^2(s)-\tilde{f}^2(m_\e)}}\cdot\frac{(1-s)}{\tilde{f}'(s)}\dd s\\
 & \leq \lim_{\e\to0} 4c_\e \sup_{s\in(m_\e,\delta)}\bigg(\frac{1-s}{\tilde{f}'(s)}\bigg) \int_{\tilde{f}(m_\e)}^{\tilde{f}(\delta)} \frac{\dd t}{t\sqrt{t^2-\tilde{f}^2(m_\e)}} \\ 
 & = \lim_{\e\to0} 4c_\e \sup_{s\in(m_\e,\delta)}\bigg(\frac{1-s}{\tilde{f}'(s)}\bigg) \frac{1}{\tilde{f}(m_\e)}\arctan\bigg(\frac{\sqrt{\tilde{f}^2(\delta)-\tilde{f}^2(m_\e)}}{\tilde{f}(m_\e)}\bigg)
= \pi \sup_{s\in(0,\delta)}\bigg(\frac{1-s}{\tilde{f}'(s)}\bigg)
\end{align*}
where we used the fact that $\frac{4c_\e}{\tilde{f}(m_\e)}\to 2$ by \eqref{eq:first-int-min}.
By collecting the previous inequalities we conclude that for all $\delta\in(0,1)$
$$
\pi\inf_{s\in(0,\delta)}\bigg(\frac{1}{f'(s)}\bigg) \leq [u](\midp) \leq \pi \sup_{s\in(0,\delta)}\bigg(\frac{1-s}{\tilde{f}'(s)}\bigg)
$$
so that by letting $\delta\to0$ and recalling Proposition~\ref{prop:sfrac}, we get $[u](\midp)=\frac{\pi}{f'(0)}=\sfrac$.
With a small abuse of notation, the previous computation says that if $f'(0)=0$, then $[u](\midp)=+\infty$, which is a contradiction with $[u](\midp)=a<+\infty$. Hence, necessarily $\sfrac$ is finite and \eqref{proof:case1a-4} holds.
\end{proof}


\subsection{Case III: elastic regime} \label{subsec:elastic}
We eventually consider the case $m_0=1$. The limit behaviour of the family $(u_\e,v_\e)$ is an elastic critical point, as described by the following proposition.

\begin{proposition} \label{prop:case1c}
Assume that $m_0=1$. Then $u(x)=\frac{a}{L}x$ and $c_0=\frac{a}{L}$. Moreover the convergence of the energies \eqref{eq:conv-energy} holds if and only if $\frac{a}{L}\leq\frac{\stress}{2}$.
\end{proposition}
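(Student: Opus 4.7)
The strategy follows the general scheme of Proposition~\ref{prop:case1b}. First I would pass to a subsequence and split according to whether $m_\eps \geq s_\eps$ eventually (so that $A_\eps$ is empty and $f_\eps(v_\eps) = \psi_\eps(v_\eps) \in [\psi_\eps(s_\eps), 1]$, with $\psi_\eps(s_\eps) \to 1$) or $m_\eps < s_\eps$. In the first case $f_\eps(v_\eps) \to 1$ uniformly on $[0,L]$, so $u_\eps' = c_\eps/f_\eps^2(v_\eps) \to c_0$ uniformly, forcing $u(x) = c_0 x$ with no jump. In the second case Lemma~\ref{lemma:stima-tempi} applies and yields $u \in \SBV(0,L)$, $u' = c_0$ a.e., $J_u \subset \{\midp\}$. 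To show the jump vanishes I would repeat the chain of inequalities \eqref{[u]<s0} from the proof of Proposition~\ref{prop:case1b}, obtaining
\[
[u](\midp) \leq \limsup_{\eps\to 0} \frac{2\,c_\eps\, s(m_\eps)}{(1-m_\eps)f(m_\eps)},
\]
where $s(m)$ denotes the inverse of the map $s \mapsto \mbar_s$ from Proposition~\ref{prop:g2b}. Since $c_\eps$ is bounded, $(1-m_\eps)f(m_\eps) \to \stress$ by \ref{ass:f2}, and $s(m_\eps) \to 0$ as $m_\eps \to 1$ (by continuity of $m \mapsto s(m)$ together with $\mbar_0 = 1$), the right-hand side vanishes. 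Thus $u$ is linear, and the boundary trace $u(L) = a$ forces $c_0 = a/L$.

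For the energy convergence I would start from the identity
\[
\F_\eps(u_\eps, v_\eps) = c_\eps a_\eps + \int_0^L\left[\frac{(1-v_\eps)^2}{4\eps} + \eps (v_\eps')^2\right] dx,
\]
which, together with $c_\eps a_\eps \to c_0 a = a^2/L$, gives $\liminf_\eps \F_\eps \geq a^2/L$. Since $\F(u,1) = L\phi(a/L)$ equals $a^2/L$ precisely when $a/L \leq \tfrac{\stress}{2}$ and equals $\stress a - \tfrac{\stress^2 L}{4} < a^2/L$ when $a/L > \tfrac{\stress}{2}$, the failure direction is immediate: when $a/L > \tfrac{\stress}{2}$ we have $\liminf_\eps \F_\eps \geq a^2/L > \F(u,1)$, so the convergence \eqref{eq:conv-energy} fails.

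For the success direction ($a/L \leq \tfrac{\stress}{2}$) it remains to show that $\int_0^L [(1-v_\eps)^2/(4\eps) + \eps (v_\eps')^2]\,dx \to 0$. I would first rule out the sub-case $m_\eps < s_\eps$ when $a/L < \tfrac{\stress}{2}$: reading \eqref{eq:first-int-min} together with $-d_\eps > 0$ from \eqref{eq:deps-negative} gives $c_\eps^2 \geq [(1-m_\eps)f(m_\eps)]^2/4$, so $c_0 \geq \tfrac{\stress}{2} > a/L$, contradicting $c_0 = a/L$. In the sub-case $m_\eps \geq s_\eps$, Lemma~\ref{lemma:compactness} yields
\[
\int_0^L \eps (v_\eps')^2\, dx \leq \eps\|v_\eps'\|_\infty \int_0^L |v_\eps'|\, dx \leq 1 - m_\eps \longrightarrow 0,
\]
and comparing the first integral at $v_\eps(\midp) = m_\eps$ (where $f_\eps(m_\eps) = \psi_\eps(m_\eps) \to 1$) with the first integral at the boundary $v_\eps(0) = 1$ gives
\[
\frac{(1-m_\eps)^2}{4\eps} = d_\eps + \frac{c_\eps^2}{\psi_\eps^2(m_\eps)} \longrightarrow d_0 + c_0^2 = -\lim_{\eps\to 0} \eps(v_\eps'(0))^2 \leq 0,
\]
so $(1-m_\eps)^2/(4\eps) \to 0$, whence $\int_0^L (1-v_\eps)^2/(4\eps)\, dx \leq L (1-m_\eps)^2/(4\eps) \to 0$, and $\F_\eps \to a^2/L$. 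The main obstacle will be the borderline sub-case $a/L = \tfrac{\stress}{2}$ with $m_\eps < s_\eps$, where the ruling-out argument is inconclusive; there I would argue that the blow-up $\tilde v_\eps$ of $v_\eps$ around $\midp$ converges to the trivial constant profile $1$ (corresponding to $\mbar_0 = 1$, $s_0 = 0$), adapt Lemma~\ref{lemma:case1b} to obtain $\int_{A_\eps}[\cdots]\, dx \to g(0) = 0$ while $\int_{A_\eps^c}[\cdots]\, dx \to L c_0^2$, and thereby conclude $\F_\eps \to L\stress^2/4 = \F(u,1)$.
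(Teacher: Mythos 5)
Your derivation of $[u](\midp)=0$ via the chain of inequalities from the proof of Proposition~\ref{prop:case1b} and the inverse map $m\mapsto s(m)$ from Proposition~\ref{prop:g2b} (relying on $s(m_\eps)\to 0$ by continuity of that inverse at $m=1$) is correct and gives a unified alternative to the paper's split into the sub-cases $f_\eps(m_\eps)\to 0$ and $f_\eps(m_\eps)\to\alpha>0$ followed by the direct integral estimate exploiting \ref{ass:f5}. Your argument in the regime $m_\eps\geq s_\eps$ --- evaluating the first integral at $x=0$ and at the midpoint to get $(1-m_\eps)^2/(4\eps)\to 0$ and $c_0^2+d_0=0$ --- is a valid shortcut, and your ruling out of $m_\eps<s_\eps$ when $a/L<\stress/2$ is also correct.

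The genuine gap is the borderline sub-case $a/L=\stress/2$ with $m_\eps<s_\eps$. ``Adapting Lemma~\ref{lemma:case1b}'' does not work: the proof of that lemma absorbs the term $\frac{c_\eps^2\psi_\eps'(s_\eps)}{(1-s_\eps)\psi_\eps^3(s_\eps)}\int(1-v_\eps)^2$ into the left-hand side using the limit $\frac{4\eps c_\eps^2\psi_\eps'(s_\eps)}{(1-s_\eps)\psi_\eps^3(s_\eps)}\to\frac{4c_0^2}{\stress^2}<1$, and that strict inequality fails exactly when $c_0=\stress/2$. The blow-up converging to the constant profile controls $v_\eps$ only near $\midp$ and does not, by itself, give the global estimate $\int_0^L\eps(v_\eps')^2\,\dd x\to 0$ that is needed. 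The missing ingredient is the paper's estimate \eqref{proof:case1c-5}: multiply \eqref{eq:cp1} by $v_\eps-1$ and integrate over all of $(0,L)$ (so boundary terms vanish rather than being bounded away), then exploit the monotonicity of $\fbar$ from Proposition~\ref{prop:salpha} to obtain $\int_0^L\eps|v_\eps'|^2\,\dd x\leq\bigl((2c_\eps)^2\fbar(m_\eps)-1\bigr)\int_0^L\frac{(1-v_\eps)^2}{4\eps}\,\dd x$, where the coefficient $(2c_\eps)^2\fbar(m_\eps)\to 4c_0^2/\stress^2=1$ precisely because $c_0=\stress/2$; from there $\int_0^L\frac{(1-v_\eps)^2}{4\eps}\,\dd x\to 0$ follows via the first integral and the identities $c_0^2+d_0=0$, $a=c_0 L$, and the energy convergence is immediate.
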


\begin{proof}
We distinguish three cases, depending on whether the minimum value $m_\e$ of $v_\e$ is above or below the threshold $s_\e$ (see \eqref{def:feps}) and whether $f_\e(m_\e)$ converges to zero or not.

\medskip\noindent\textit{Step 1: $m_\e\geq s_\e$.}
In this case $f_\e(v_\e(x))\geq f_\e(m_\e)=\psi_\e(m_\e)\to1$, that is, $f_\e(v_\e)$ converges to 1 uniformly in $[0,L]$. In turn $u_\e'\to c_0$ uniformly in $[0,L]$ by \eqref{eq:ceps} and $u'\equiv c_0$. In view of the boundary conditions \eqref{def:cp3}
\begin{equation*}
a = \lim_{\e\to0} a_\e = \lim_{\e\to0}\int_0^L u_\e'\dd x = c_0L,
\end{equation*}
therefore $c_0=\frac{a}{L}$ and $u(x)=\frac{a}{L}x$. To complete the proof in this case, it only remains to show that the convergence of the energy holds if and only if $c_0\leq\frac{\stress}{2}$.

If $c_0>\frac{\stress}{2}$, then
\begin{equation*}
\Feps(u_\e,v_\e) \geq \int_0^L f_\e^2(v_\e)|u_\e'|^2\dd x \stackrel{\e\to0}{\longrightarrow} c_0^2 L > \Bigl(\stress c_0 - \frac{\stress^2}{4}\Bigr)L = \int_0^L \Bigl( \stress u' - \frac{\stress^2}{4}\Bigr)\dd x = \F(u,1).
\end{equation*}
Conversely, assume that $c_0\leq\frac{\stress}{2}$. Recalling the definition of the discrepancy $\xi_\e$ in \eqref{def:discrepancy} and evaluating it at the point $y_\e$ where $\xi_\e(y_\e)=0$, by the uniform convergence $f_\e(v_\e)\to1$ we have
\begin{equation*}
c_0^2 + d_0 = \lim_{\e\to0} \Bigl(\frac{c_\e^2}{f_\e^2(v_\e(y_\e))} + d_\e\Bigr) = \lim_{\e\to0}\xi_\e(y_\e)= 0,
\end{equation*}
and therefore $c_0^2+d_0=0$. Moreover, we compute
\begin{align*}
\int_0^L \bigg| \frac{(1-v_\e)^2}{4\e} -\e (v_\e')^2\bigg|\dd x
& = 2\int_0^{\midp}|\xi_\e(x)|\dd x 
= - 2\int_0^{y_\e} \xi_\e(x)\dd x + 2\int_{y_\e}^{\midp}\xi_\e(x)\dd x \\
& = - 2\int_0^{y_\e} (c_\e u_\e'+d_\e)\dd x + 2\int_{y_\e}^{\midp} (c_\e u_\e'+d_\e) \dd x \\
& = 2d_\e\bigl(\midp-2y_\e\bigr) + 2c_\e\bigl(u_\e(\midp)-2u_\e(y_\e)\bigr) \\
& = d_\e L +2c_\e u_\e(\midp) - 4\Bigl( d_\e y_\e + c_\e\int_0^{y_\e}u_\e'\dd x \Bigr),
\end{align*}
so that by passing to the limit, and assuming up to subsequences $y_\e\to y_0\in[0,L]$,
\begin{equation} \label{proof:case1c-1}
\lim_{\e\to0}\int_0^L \bigg| \frac{(1-v_\e)^2}{4\e} -\e (v_\e')^2\bigg|\dd x = d_0L + c_0 a -4\bigl(d_0y_0 + c_0^2y_0\bigr) = 0,
\end{equation}
where the last equality follows by the identities $c_0^2+d_0=0$ and $c_0=\frac{a}{L}$. We deduce that
\begin{align*}
\int_0^L \biggl( \frac{(1-v_\e)^2}{4\e} +\e (v_\e')^2 - (1-v_\e)|v_\e'|\biggr)\dd x 
& = \int_0^L \biggl( \frac{1-v_\e}{2\sqrt{\e}} - \sqrt{\e}|v_\e'| \biggr)^2 \dd x \\
& \leq \int_0^L \bigg| \frac{(1-v_\e)^2}{4\e} -\e (v_\e')^2\bigg|\dd x \to 0,
\end{align*}
and in turn
\begin{equation} \label{proof:case1c-2}
\begin{split}
\lim_{\e\to0} \int_0^L \bigg( \frac{(1-v_\e)^2}{4\e} +\e (v_\e')^2\bigg) \dd x
& = \lim_{\e\to0} \int_0^L (1-v_\e)|v_\e|'\dd x 
= \lim_{\e\to0} -2 \int_0^{\midp} (1-v_\e)v_\e'\dd x \\
& = \lim_{\e\to0} (1-v_\e(\midp))^2 = \lim_{\e\to0} (1-m_\e)^2 = 0.
\end{split}
\end{equation}
In view of \eqref{proof:case1c-2}, we conclude that the convergence of the energy holds:
\begin{equation*}
\lim_{\e\to0}\Feps(u_\e,v_\e) \xupref{proof:case1c-2}{=} \lim_{\e\to0}\int_0^L f_\e^2(v_\e)|u_\e'|^2\dd x = c_0^2L = \int_0^L (u')^2\dd x = \F(u,1),
\end{equation*}
where we used the uniform convergences $f_\e(v_\e)\to1$ and $u_\e'\to c_0$, and $u'\equiv c_0\leq\frac{\stress}{2}$.

\medskip\noindent\textit{Step 2: $m_\e< s_\e$.}
In this case we can apply Lemma~\ref{lemma:stima-tempi} to deduce that $|A_\e|\to0$, where $A_\e=\{v_\e<s_\e\}$, and that $u_\e'\to c_0$ uniformly on compact subsets of $[0,L]$ not containing $\midp$, see in particular \eqref{eq:conv-ueps}. For any $x\in(0,\midp)$ we have $u_\e(x)=\int_0^x u_\e'\dd t \to c_0x$, whereas for $x\in(\midp,L)$ we have $u_\e(x)=u_\e(L)-\int_x^L u_\e'\dd t \to a - c_0(L-x)$. Therefore the limit function is
\begin{equation} \label{proof:case1c-4}
u(x)=
\begin{cases}
c_0 x & \text{if }x\in(0,\midp),\\
c_0 x + a-c_0L & \text{if }x\in(\midp,L), 
\end{cases}
\end{equation}
with a possible jump at $\midp$ with amplitude $[u](\midp)=a-c_0L$. Notice that $c_0\leq\frac{a}{L}$ by \eqref{eq:bound-c0}. We next distinguish two further subcases depending on the limit value of $f_\e(m_\e)$.

\medskip\noindent\textit{Step 2a: $f_\e(m_\e)\to0$.}
In this case by Lemma~\ref{lemma:c0} we have $c_0=\frac{\stress}{2}$.
Let us first show that $[u](\midp)=0$. We consider once more the blow-up $\tilde{v}_\e(t)\defeq v_\e(\midp+\e t)$, which obeys the equation \eqref{eqnvPsi} in $(-\frac{x_\e}{\e},\frac{x_\e}{\e})$. Denote by $\tilde{f}(s)\defeq(1-s)f(s)$ and recall that $\tilde{f}$ is strictly increasing by assumption \ref{ass:f3}. By monotonicity of $f$ and $\tilde{f}$, the function $\Psi_\e$ defined in \eqref{eqnvPsi} satisfies
\begin{equation*}
\Psi_\e(s) \geq \frac{\tilde{f}^2(s)-\tilde{f}^2(m_\e)}{4f^2(s)} = \frac{\tilde{f}(s)-\tilde{f}(m_\e)}{4f^2(s)} \cdot \bigl(\tilde{f}(s)+\tilde{f}(m_\e)\bigr) \geq \frac{\tilde{f}(s)-\tilde{f}(m_\e)}{2f^2(s)} \cdot \tilde{f}(m_\e)
\end{equation*}
 for $s\in(m_\e,1)$. Then by arguing as in \eqref{eq:jump} we have, recalling that $c_\e\to\frac{\stress}{2}$, $\tilde{v}_\e(\frac{x_\e}{\e})=s_\e$, and $(\tilde{v}_\e')^2=\Psi_\e(\tilde{v}_\e)$ by \eqref{eqnvPsi},
\begin{align*}
[u](\midp)
& = a-c_0L
= \lim_{\e\to0} 2\int_{\frac{L}{2}}^{L}\frac{c_\e}{f_\e^2(v_\e)}\dd x -c_0L
= \lim_{\e\to0} 2c_\e \int_0^{\frac{x_\e}{\e}} \frac{\dd t}{f^2(\tilde{v_\e})} +c_0L - c_0L\\
& = \lim_{\e\to0} \stress \int_0^{\frac{x_\e}{\e}} \frac{\tilde{v}_\e'}{f^2(\tilde{v_\e})\sqrt{\Psi_\e(\tilde{v}_\e)}}\dd t
= \lim_{\e\to0} \stress \int_{m_\e}^{s_\e} \frac{\dd s}{f^2(s)\sqrt{\Psi_\e(s)}} \\
& \leq \lim_{\e\to0} \frac{\sqrt{2}\stress}{\bigl(\tilde{f}(m_\e)\bigr)^{1/2}} \int_{m_\e}^{s_\e} \frac{\dd s}{f(s)\bigl(\tilde{f}(s)-\tilde{f}(m_\e)\bigr)^{1/2}}\\
& = \lim_{\e\to0}\sqrt{2\stress}\int_{m_\e}^{s_\e} \frac{(1-s)\dd s }{\tilde{f}(s)\bigl(\tilde{f}(s)-\tilde{f}(m_\e)\bigr)^{1/2}}
 \leq \lim_{\e\to0} \frac{\sqrt{2\stress}}{\tilde{f}(m_\e)}\int_{m_\e}^{s_\e} \frac{(1-s)\dd s }{\bigl(\tilde{f}(s)-\tilde{f}(m_\e)\bigr)^{1/2}}\\
& = \lim_{\e\to0} \sqrt{\frac{2}{\stress}} \int_{m_\e}^{s_\e} \frac{(1-s)\dd s }{\bigl(\tilde{f}(s)-\tilde{f}(m_\e)\bigr)^{1/2}} \,.
\end{align*}
We can now write, for $s\in(m_\e,s_\e)$, $\tilde{f}(s)-\tilde{f}(m_\e)=\tilde{f}'(\zeta_\e(s))(s-m_\e)$ for some point $\zeta_\e(s)\in(m_\e,s)$. Since $\zeta_\e(s)\geq m_\e\to1$ and in view of assumption \ref{ass:f5}, given any $M>0$ we have that for all $\e$ small enough
\begin{equation*}
\tilde{f}'(\zeta_\e(s)) \geq M(1-\zeta_\e(s))^3 \geq M(1-s)^3 \qquad\text{for all }s\in(m_\e,s_\e),
\end{equation*} 
hence
\begin{align*}
[u](\midp)
& \leq \lim_{\e\to0} \sqrt{\frac{2}{\stress}} \cdot \frac{1}{M}\int_{m_\e}^{s_\e} \frac{(1-s)\dd s }{\sqrt{s-m_\e}(1-s)^{3/2}} \\
& \leq \lim_{\e\to0} \sqrt{\frac{2}{\stress}} \cdot \frac{1}{M}\int_{m_\e}^{s_\e} \frac{\dd s }{\sqrt{s-m_\e}\sqrt{s_\e-s}} 
= \sqrt{\frac{2}{\stress}} \cdot \frac{\pi}{M}\,.
\end{align*}
Since $M$ is arbitrarily large, we conclude that $[u](\midp)=0$, as claimed. In turn $u(x)=\frac{a}{L}x$ by \eqref{proof:case1c-4}. In particular we also have $\frac{a}{L}=c_0=\frac{\stress}{2}$.

To conclude the proof in this case, we need to show that the convergence of the energy holds. We preliminary show that 
\begin{equation} \label{proof:case1c-5}
\lim_{\e\to0} \int_0^L \e(v_\e'(x))^2\dd x = 0 .
\end{equation}
To this aim, we multiply \eqref{eq:cp1} by the function $v_\e-1$ and integrate in $(0,L)$: we have, after integration by parts,
\begin{align*}
\int_{0}^{L} \e|v_\e'|^2 \dd x
& = \int_0^{L} \frac{c_\e^2 f_\e'(v_\e)}{f_\e^3(v_\e)}(1-v_\e)\dd x - \int_0^L \frac{(1-v_\e)^2}{4\e}\dd x \\
& = \int_0^L \biggl( \frac{4\e c_\e^2 f_\e'(v_\e)}{(1-v_\e)f_\e^3(v_\e)} - 1 \biggr)\frac{(1-v_\e)^2}{4\e}\dd x .
\end{align*}
Now, for $x\in A_\e$ we have $v_\e(x)\leq s_\e$ and by definition of $f_\e$ (see \eqref{def:feps}) and $\fbar$ (see \eqref{def:fbar})
\begin{equation*}
\frac{4\e c_\e^2 f_\e'(v_\e)}{(1-v_\e)f_\e^3(v_\e)} - 1
= (2c_\e)^2\fbar(v_\e)-1
\leq (2c_\e)^2\fbar(m_\e)-1,
\end{equation*}
where we used the monotonicity of $\fbar$ (see Proposition~\ref{prop:salpha}). Similarly, if $x\in A_\e^c$ we have $v_\e(x)\geq s_\e$ and by the monotonicity properties of $\psi_\e$ in assumptions \ref{ass:psi1} and \ref{ass:psi3}
\begin{equation*}
\frac{4\e c_\e^2 f_\e'(v_\e)}{(1-v_\e)f_\e^3(v_\e)} - 1
= \frac{(2c_\e)^2 \e\psi_\e'(v_\e)}{(1-v_\e)\psi_\e^3(v_\e)} - 1
\leq \frac{(2c_\e)^2 \e\psi_\e'(s_\e)}{(1-s_\e)\psi_\e^3(s_\e)} - 1
= (2c_\e)^2\fbar(s_\e)-1.
\end{equation*}
Hence 
\begin{align*}
\int_{0}^{L} \e|v_\e'|^2 \dd x \leq \Bigl((2c_\e)^2\fbar(m_\e)-1\Bigr)\int_0^L \frac{(1-v_\e)^2}{4\e}\dd x \to 0
\end{align*}
since the integral on the right-hand side is uniformly bounded by \eqref{eq:bound-energy}, and $(2c_\e)^2\fbar(m_\e)\to (\frac{2c_0}{\stress})^2=1$ again by Proposition~\ref{prop:salpha}. Hence \eqref{proof:case1c-5} follows.

We next show $c_0^2+d_0=0$. By evaluating \eqref{eq:first-integral2} at $x=0$ we have
\begin{equation*}
c_0^2 + d_0 = \lim_{\e\to0} (c_\e^2 + d_\e ) = \lim_{\e\to0} -\e(v_\e'(0))^2 \leq 0,
\end{equation*}
hence $c_0^2+d_0\leq0$. On the other hand, for every fixed $x_0\in(0,\midp)$ we have that $f_\e(v_\e)\to1$ uniformly in $(0,x_0)$, and therefore
\begin{align*}
(c_0^2+d_0)x_0
& = \lim_{\e\to0} \int_0^{x_0} \biggl(\frac{c_\e^2}{f_\e^2(v_\e)} + d_\e\biggr)\dd x
\xupref{eq:first-integral2}{=} \lim_{\e\to0}\int_0^{x_0} \biggl(\frac{(1-v_\e)^2}{4\e} - \e(v_\e')^2\biggr)\dd x \\
& \geq - \lim_{\e\to0}\int_0^{x_0} \e(v_\e')^2 \dd x \xupref{proof:case1c-5}{=} 0,
\end{align*}
which combined with the inequality obtained before gives $c_0^2+d_0=0$, as desired.

By using \eqref{eq:first-int}, \eqref{eq:ceps}, \eqref{proof:case1c-5}, and the identities $c_0^2+d_0=0$, $c_0=\frac{a}{L}$, we have
\begin{equation} \label{proof:case1c-6}
\begin{split}
\lim_{\e\to0}\int_{0}^{L} \frac{(1-v_\e)^2}{4\e} \dd x 
& = \lim_{\e\to0} \int_{0}^{L} \bigl(\e(v_\e')^2 + c_\e u_\e'\bigr)\dd x + d_0 L\\
& = c_0 a + d_0L = c_0(a-c_0L) = 0,
\end{split}
\end{equation}
and similarly
\begin{equation} \label{proof:case1c-7}
\lim_{\e\to0}\int_{0}^{L} \biggl(\frac{(1-v_\e)^2}{4\e} - f_\e^2(v_\e)(u_\e')^2 \biggr)\dd x
= \lim_{\e\to0} \int_{0}^{L} \e(v_\e')^2 \dd x + d_0 L = -c_0^2L.
\end{equation}
Therefore combining \eqref{proof:case1c-5}, \eqref{proof:case1c-6}, and \eqref{proof:case1c-7} we find
\begin{equation}  \label{proof:case1c-8}
\lim_{\e\to0}\Feps(u_\e,v_\e)
= c_0^2L + 2\lim_{\e\to0}\int_0^L \frac{(1-v_\e)^2}{4\e}\dd x 
= \int_0^L (u')^2\dd x = \F(u,1),
\end{equation}
that is, the convergence of the energy holds.

\medskip\noindent\textit{Step 2b: $f_\e(m_\e)\to\alpha>0$.}
By monotonicity of $f_\e$
\begin{equation*}
\Feps(u_\e,v_\e)\geq\int_0^L f_\e^2(v_\e)|u_\e'|^2\dd x \geq\frac{\alpha^2}{2}\int_0^L |u_\e'|^2\dd x
\end{equation*}
for all $\e$ sufficiently small; by the uniform bound \eqref{eq:bound-energy} we then deduce that $\|u_\e\|_{H^1(0,L)}$ is uniformly bounded, and therefore that the limit $u$ belongs to $H^1(0,L)$. In particular $u$ cannot jump at $\midp$ and by \eqref{proof:case1c-4} we conclude that $u(x)=\frac{a}{L}x$ and $c_0=\frac{a}{L}$.

To conclude the proof, we have to show that also in this case the convergence of the energy holds if and only if $c_0\leq\frac{\stress}{2}$. Assume first that $c_0>\frac{\stress}{2}$: then for every $\delta>0$, by the uniform convergences $u_\e'\to c_0$ and $f_\e(v_\e)\to1$ in $(\midp-\delta,\midp+\delta)^c$ we find
\begin{equation*}
\liminf_{\e\to0}\Feps(u_\e,v_\e)
\geq \liminf_{\e\to0}\int_{(\frac{L}{2}-\delta,\frac{L}{2}+\delta)^c} f_\e^2(v_\e)|u_\e'|^2\dd x 
= c_0^2 (L-2\delta),
\end{equation*}
so that by letting $\delta\to0$ we find
\begin{equation*}
\liminf_{\e\to0}\Feps(u_\e,v_\e) \geq c_0^2L  > \Bigl(\stress c_0 - \frac{\stress^2}{4}\Bigr)L = \int_0^L \Bigl( \stress u' - \frac{\stress^2}{4}\Bigr)\dd x = \F(u,1),
\end{equation*}
that is, the convergence of the energy does not hold. If, instead, $c_0\leq\frac{\stress}{2}$, then one can prove that the convergence of the energy holds just by repeating the argument in Step~2a leading to \eqref{proof:case1c-8}.
\end{proof}


\begin{proof}[Proof of Theorem~\ref{thm:main1}]
The result follows by combining Proposition~\ref{prop:case1b}, Proposition~\ref{prop:case1a} and Proposition~\ref{prop:case1c}.
\end{proof}


\section{Proof of the approximation of critical points} \label{sect:proof2}

In this section we give the proof of Theorem~\ref{thm:main2}. We premise a technical lemma to the proof, which shows that it is possible to construct a solution $v_\e$ to the ODE \eqref{eq:cp1} which attains the boundary conditions $v_\e(0)=v_\e(L)=1$.

\begin{lemma} \label{lemma:construction-ODE}
Let $c_\eps\in(0,\frac{\stress}{2})$ be such that $\sup_\e c_\e<\frac{\stress}{2}$. Then there exists $\e_0>0$ with the following property: for every $\e\in(0,\e_0)$ there exists $m_\e\in(0,z_{c_\e})$, with $(1-m_\e)f(m_\e)\leq 2c_\e$, such that the unique solution to the initial value problem
\begin{equation} \label{ODE:construction}
\begin{cases}
\ds\e v_\e'' = \frac{c_\e^2 f_\e'(v_\e)}{f_\e^3(v_\e)} + \frac{v_\e-1}{4\e} \\[7pt]
v_\e(\midp)=m_\e \\[3pt]
v_\e'(\midp)=0
\end{cases}
\end{equation}
satisfies $v_\e(0)=v_\e(L)=1$. Moreover:
\begin{itemize}
\item if $\inf_\e c_\e>0$ then $\inf_\e m_\e >0$;
\item if $c_\e\to0$ then $m_\e\to0$ and $\frac{2c_\e}{f(m_\e)}\to1$.
\end{itemize}
\end{lemma}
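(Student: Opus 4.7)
\textit{Plan.} The approach is a shooting argument in the initial value $m \in (0, 1)$. For each such $m$, Cauchy--Lipschitz yields a unique solution $V_\e^{(m)}$ of the ODE in \eqref{ODE:construction} with $V_\e^{(m)}(\midp) = m$, $(V_\e^{(m)})'(\midp) = 0$, and by reflection invariance of the equation about $\midp$ this solution is automatically even; the two boundary conditions therefore reduce to the single requirement $V_\e^{(m_\e)}(L) = 1$. Multiplying \eqref{ODE:construction} by $(V_\e^{(m)})'$ and integrating once gives the first integral
\[
	\e^2 \bigl((V_\e^{(m)})'\bigr)^2 = \tilde\Psi_\e\bigl(V_\e^{(m)};m\bigr), \qquad
	\tilde\Psi_\e(v;m)\defeq \frac{(1-v)^2-(1-m)^2}{4} + \e c_\e^2\!\left(\frac{1}{f_\e^2(m)}-\frac{1}{f_\e^2(v)}\right).
\]
Whenever $\tilde\Psi_\e(\cdot;m)>0$ on $(m,1)$, $V_\e^{(m)}$ is strictly increasing to the right of $\midp$ and reaches $1$ at $T_\e(m) = \midp + \e\int_m^1 dv/\sqrt{\tilde\Psi_\e(v;m)}$. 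For $m<s_\e$ the identity $\e/f_\e^2(m)=1/f^2(m)$ makes $\tilde\Psi_\e$ an $O(\e)$-perturbation of the function $\Psi$ from Section~\ref{sect:ODE}, the perturbation being localized in the region $v>s_\e$ where $f_\e=\psi_\e$.

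\textit{Key steps.} (a) Define the perturbed threshold $\tilde m_\e \in (0, m_{c_\e})$ as the unique solution of $\tilde\Psi_\e(1;\tilde m_\e)=0$, i.e.\ $(1-\tilde m_\e)^2 f^2(\tilde m_\e)+4\e c_\e^2 f^2(\tilde m_\e)=4 c_\e^2$; the left-hand side is strictly increasing in $m$ by \ref{ass:f3}, so this is well-posed and $\tilde m_\e\uparrow m_{c_\e}$ as $\e\to 0$. For $m\in (0,\tilde m_\e)$ the integrand defining $T_\e(m)$ has only an integrable $(v-m)^{-1/2}$ singularity, so $T_\e(m)<+\infty$ and $m\mapsto T_\e(m)$ is continuous (dominated convergence). (b) Show $T_\e(m)\to +\infty$ as $m\to\tilde m_\e^-$: near $v=1$ one has $\tilde\Psi_\e(v;m)=\tilde\Psi_\e(1;m)+O(1-v)$, and a computation in the spirit of \eqref{eq:est-teta-3} shows the contribution of $[1-\delta,1]$ to the integral is of order $\log(\delta/\sqrt{\tilde\Psi_\e(1;m)})$, while the complementary part on $[m,1-\delta]$ stays bounded uniformly in $\e$ for $c_\e$ in a compact subset of $(0,\stress/2)$, by the same arguments as in Proposition~\ref{prop:ODE}. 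Conversely, as $m\to 0^+$, the term $c_\e^2/f^2(m)$ dominates $\tilde\Psi_\e$ everywhere, so $T_\e(m)-\midp\to 0$. (c) By the intermediate value theorem, for every $\e$ smaller than a suitable $\e_0$ we obtain $m_\e\in (0,\tilde m_\e)$ with $T_\e(m_\e)=L$; the inequality $(1-m_\e)f(m_\e)\leq 2c_\e$ is immediate from $m_\e<\tilde m_\e<m_{c_\e}$ and monotonicity of $s\mapsto (1-s)f(s)$.

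\textit{Asymptotic properties and main obstacle.} The identity $T_\e(m_\e)=L$ rewrites as $\int_{m_\e}^1 dv/\sqrt{\tilde\Psi_\e(v;m_\e)}=L/(2\e)\to +\infty$, which by step~(b) is possible only if $\tilde\Psi_\e(1;m_\e)\to 0$; dropping the vanishing $O(\e)$ correction this reads $(1-m_\e)f(m_\e)/(2c_\e)\to 1$. If $\inf_\e c_\e>0$ and $m_\e\to 0$ along a subsequence, then $f(m_\e)\to 0$ forces the ratio to vanish, contradicting its convergence to $1$; hence $\inf_\e m_\e>0$. If instead $c_\e\to 0$, then $m_{c_\e}\to 0$ (as $(1-m)f(m)\to 0$ forces $m\to 0$), whence $m_\e<\tilde m_\e<m_{c_\e}$ yields $m_\e\to 0$, and combining with the ratio identity and $1-m_\e\to 1$ gives $2c_\e/f(m_\e)\to 1$. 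The delicate point in this programme is the quantitative analysis in step~(b): both the divergence of $T_\e(m)$ at $\tilde m_\e^-$ and its converse must be made uniform in $\e$, and one has to verify that the perturbation of $\Psi$ caused by the $\psi_\e$-part of $f_\e$ near $v=1$ is negligible at leading order.
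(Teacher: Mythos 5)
Your overall strategy --- shooting in the initial datum $m$, derive the first integral $\e^{2}(v_\e')^{2}=\tilde\Psi_\e(v_\e;m)$, express the reaching time as $\e\int_m^1 dv/\sqrt{\tilde\Psi_\e}$, show it runs from small values (as $m\to0^+$) to $+\infty$ (as $m$ approaches a threshold) and apply the intermediate value theorem --- is exactly the paper's strategy, and your first integral $\tilde\Psi_\e$ coincides with the paper's $\e^{2}H$ in \eqref{proof:approx-2}. The treatment of the asymptotic properties at the end is also the paper's argument in spirit.

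There are, however, two genuine gaps in the sketch. The first concerns the definition of the threshold $\tilde m_\e$. You set $\tilde\Psi_\e(1;\tilde m_\e)=0$ and then, for $m<\tilde m_\e$, treat $T_\e(m)=\midp+\e\int_m^1 dv/\sqrt{\tilde\Psi_\e(v;m)}$ as a well-defined finite quantity. But for this you need $\tilde\Psi_\e(v;m)>0$ for \emph{all} $v\in(m,1)$, not only at the endpoint $v=1$. Writing $\tilde\Psi_\e(v;m)=g_\e(v)-g_\e(m)$ with $g_\e(v)=\frac{(1-v)^2}{4}-\frac{\e c_\e^2}{f_\e^2(v)}$, it is a priori possible that $g_\e$ dips below $g_\e(1)$ somewhere in $(s_\e,1)$, in which case for some $m<\tilde m_\e$ the solution $V_\e^{(m)}$ would stall at an interior value below $1$ and $T_\e(m)$ would be undefined. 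The paper's proof guards against exactly this by defining $\hat m(\e)$ through $i(\e)\defeq\inf_{s\in(s_\e,1)}\bigl(\frac{(1-s)^2}{4}-\frac{\e c_\e^2}{f_\e^2(s)}\bigr)$ rather than through the value at $s=1$. Your threshold does coincide with the paper's provided one proves that $g_\e$ is monotone decreasing on $(s_\e,1)$ for $\e$ small; this can be done using \ref{ass:psi3}, \ref{ass:psi2} and the assumption $\sup_\e c_\e<\frac{\stress}{2}$ (one gets $g_\e'(s)\le(1-s)\bigl[-\frac12+2c_\e^2\fbar(s_\e)\bigr]<0$ since $\fbar(s_\e)\to\sigma_c^{-2}$), but this argument is missing from the sketch and is not a cosmetic detail --- without it the positivity of $\tilde\Psi_\e$ on $(m,1)$, and hence the whole shooting map, is unjustified.

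The second gap is the Taylor expansion ``$\tilde\Psi_\e(v;m)=\tilde\Psi_\e(1;m)+O(1-v)$'' in step (b). If the remainder were genuinely of order $1-v$ with a non-vanishing linear coefficient, the integral $\int_{1-\delta}^1 dv/\sqrt{\tilde\Psi_\e(1;m)+c(1-v)}$ would remain \emph{bounded} as $\tilde\Psi_\e(1;m)\to0$, and $T_\e(m)$ would not blow up at the threshold --- this would kill the intermediate value argument. The logarithmic divergence you invoke actually relies on the degeneracy $g_\e'(1)=0$, which holds because $\psi_\e'(1)=0$ (forced by \ref{ass:psi3}); then $\tilde\Psi_\e(v;m)\le\tilde\Psi_\e(1;m)+\frac{(1-v)^2}{4}$ and the integral behaves like $\mathrm{arcsinh}\bigl(\delta/\sqrt{\tilde\Psi_\e(1;m)}\bigr)$. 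The paper avoids this subtlety altogether by observing directly that at $m=\hat m(\e)$ the infimum of $(v_\e')^2$ over the interval is zero, hence $x_2=+\infty$. You should either use the paper's formulation or replace $O(1-v)$ by the quadratic bound and make the role of $\psi_\e'(1)=0$ explicit.

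Apart from these two points the sketch is sound and matches the paper's proof.
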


\begin{proof}
Recall that the value $z_{c_\e}\in(0,1)$ appearing in the statement is defined by the relation \eqref{eq:salpha}. Since $\sup_\e c_\e<\frac{\stress}{2}$ we have that $\sup_\e z_{c_\e}<1$ and therefore by choosing $\e_0$ small enough we can guarantee that $z_{c_\e}<s_\e$ for all $\e\in(0,\e_0)$.

For all $m\in(0,z_{c_\e})$ we consider the solution $v_\e(\cdot\,;m)$ of the initial value problem \eqref{ODE:construction} with $v_\e(\midp;m)=m$, which exists and is unique by Cauchy-Lipschitz Theorem. The solution is also symmetric with respect to the point $\midp$. The proof of the lemma amounts to show that we can choose a value $m_\e$ such that $v_\e(L;m_\e)=1$.

Fix any $m\in(0,z_{c_\e})$ such that $(1-m)f(m)\leq 2c_\e$. We first observe that, in the region $\{v_\e<s_\e\}$ (which contains an interval centered at the point $\midp$, since $m<s_\e$ and the solution is symmetric), the rescaled function $\tilde{v}_\e(t)\defeq v_\e(\midp+\e t;m)$ solves the initial value problem \eqref{eq:ODE1}--\eqref{eq:ODE3} studied in Section~\ref{sect:ODE}, for $\alpha=c_\e$. In view of Theorem~\ref{thm:ODE}, since we are assuming $(1-m)f(m)\leq 2c_\e$, the solution reaches the value $s_\e$ in finite time, namely
\begin{equation*}
\forall\,m\in(0,z_{c_\e})\text{ with }(1-m)f(m)\leq 2c_\e \quad \exists\, x_1=x_1(m,\e)>\midp \text{ such that } v_\e(x_1;m)=s_\e.
\end{equation*}
Furthermore, we can estimate $x_1$ by applying Proposition~\ref{prop:ODE} with $\eta=s_\e$: after a rescaling we find
\begin{equation*}
x_1(m,\e) \leq \frac{L}{2} + \frac{C\e}{\sqrt{1-s_\e}}
\end{equation*}
where the constant $C>0$ is independent of $\e$ and $m$, since $\sup_{\e}c_\e<\frac{\stress}{2}$ (see \eqref{eq:est-teta-1}). By \eqref{def:seps}, up to reducing the value of $\e_0$ if necessary, we can therefore guarantee that 
\begin{equation} \label{proof:approx-1}
\frac{L}{2} \leq x_1(m,\e) \leq \frac{3}{4}L
\end{equation}
for all $\e\in(0,\e_0)$ and for all $m\in(0,z_{c_\e})$ such that $(1-m)f(m)\leq 2c_\e$.

In the following argument we work with a fixed $\e\in(0,\e_0)$ and we study the family of solutions $\{v_\e(\cdot\,;m)\}$ depending on the parameter $m$.
By Theorem~\ref{thm:ODE} it also follows that $v_\e$ is strictly increasing in $(\midp,x_1)$ with $v_\e'(x_1;m)>0$. We let
$$
 x_2=x_2(m,\e)\defeq\sup\bigl\{x>x_1(m,\e)\,:\, v_\e(\cdot\,;m)<1 \text{ and } v_\e'(\cdot\,;m)>0 \text{ in } (x_1,x)\bigr\}
 $$
so that $v_\e$ is strictly increasing in $(\midp,x_2)$. By multiplying \eqref{ODE:construction} by $v_\e'$ and integrating in $(\midp,x)$, for $x<x_2$, we find with a change of variables
 \begin{align*}
 \frac{\e}{2}(v_\e'(x))^2 
 & = \int_{\frac{L}{2}}^x \e v_\e' v_\e'' \dd t
 = \int_{\frac{L}{2}}^x \biggl(\frac{c_\e^2 f_\e'(v_\e)}{f_\e^3(v_\e)} + \frac{v_\e-1}{4\e}\biggr) v_\e' \dd t \\
 & = \int_m^{v_\e(x)} \biggl(\frac{c_\e^2 f_\e'(s)}{f_\e^3(s)} + \frac{s-1}{4\e}\biggr) \dd s \\
 & = \frac{c_\e^2}{2}\biggl(\frac{1}{f_\e^2(m)} - \frac{1}{f_\e^2(v_\e(x))}\biggr) + \frac{1}{8\e}\bigl((1-v_\e(x))^2 - (1-m)^2\bigr),
 \end{align*}
 whence
\begin{equation} \label{proof:approx-2}
(v_\e')^2 = \frac{1}{\e^2}\biggl[ \biggl(\frac{(1-v_\e)^2}{4}-\frac{\e c_\e^2}{f_\e^2(v_\e)}\biggr) - \biggl(\frac{(1-m)^2}{4}-\frac{c_\e^2}{f^2(m)}\biggr) \biggr]\eqqcolon H(v_\e;m,\e)
\end{equation}
in $(\midp,x_2(m,\e))$. Let
\begin{equation*}
i(\e) \defeq \inf_{s\in(s_\e,1)} \biggl(\frac{(1-s)^2}{4} - \frac{\e c_\e^2}{f_\e^2(s)} \biggr)
\end{equation*}
and notice for later use that 
\begin{equation} \label{proof:approx-6}
-\frac{\e c_\e^2}{f^2_\e(s_\e)}\leq i(\e) \leq - \e c_\e^2 <0.
\end{equation}
The map $m\mapsto \frac{(1-m)^2}{4}-\frac{c_\e^2}{f^2(m)}$ is strictly increasing for $m\in(0,z_{c_\e})$ (by Proposition~\ref{prop:salpha}), tends to $-\infty$ as $m\to0^+$ and vanishes if $(1-m)f(m)=2c_\e$. Hence there exists a unique $\hat{m}=\hat{m}(\e)\in(0,z_{c_\e})$ such that $(1-\hat{m})f(\hat{m})<2c_\e$ and 
\begin{equation*}
i(\e) - \biggl(\frac{(1-\hat{m}(\e))^2}{4}-\frac{c_\e^2}{f^2(\hat{m}(\e))}\biggr) = 0.
\end{equation*}
By the definition \eqref{proof:approx-2} of the function $H$ it follows that
\begin{equation*}
\inf_{s\in(s_\e,1)} H(s;\hat{m}(\e),\e)=0, \qquad \inf_{s\in(s_\e,1)} H(s;m,\e)>0 \quad\text{for all }m\in(0,\hat{m}(\e)).
\end{equation*}
In turn, since $v_\e(x;m)\in(s_\e,1)$ for $x\in(x_1(m,\e),x_2(m,\e))$, by \eqref{proof:approx-2} we have that
\begin{equation} \label{proof:approx-3}
\inf_{x\in(x_1(m,\e),x_2(m,\e))} v_\e'(x;m) = \inf_{s\in(s_\e,1)} \sqrt{H(s;m,\e)}>0 \qquad\text{ for all } m\in (0,\hat{m}(\e)).
\end{equation}
Then for $m\in (0,\hat{m}(\e))$ the solution $v_\e(\cdot\,;m)$ reaches the value 1 at the finite point $x_2(m,\e)\in(x_1(m,\e),+\infty)$.

Summing up, we have proved so far that for all $m\in(0,\hat{m}(\e))$ there exist two points $x_1(m,\e)\in(\midp,\frac34L)$ and $x_2(m,\e)\in(x_1(m,\e),+\infty)$ such that
$$
v_\e(x_1(m,\e);m) = s_\e, \qquad v_\e(x_2(m,\e);m) = 1.
$$
The goal is now to show the existence of a value $m_\e\in(0,\hat{m}(\e))$ such that $x_2(m(\e),\e)=L$.

By the continuous dependence of the solution to \eqref{ODE:construction} on the initial value $m$, the point $x_2$ is a continuous function of $m$. We can write by \eqref{proof:approx-2}
\begin{equation} \label{proof:approx-4}
\begin{split}
x_2(m,\e)
& = x_1(m,\e) + \int_{x_1(m,\e)}^{x_2(m,\e)} \frac{v_\e'(x;m)}{\sqrt{H(v_\e(x;m);m,\e)}}\dd x \\
& = x_1(m,\e) + \int_{s_\e}^1 \frac{\dd s}{\sqrt{H(s;m,\e)}} \\
& \leq \frac{3}{4}L + (1-s_\e)\biggl(\inf_{s\in(s_\e,1)}H(s;m,\e)\biggr)^{-\frac12}
\end{split}
\end{equation}
and since $\lim_{m\to0^+}\inf_{s\in(s_\e,1)}H(s;m,\e)=+\infty$, we see that $x_2(m,\e)<L$ for all $m$ sufficiently small. On the other hand, for $m=\hat{m}(\e)$ we have 
$$
\inf_{x\in(x_1(\hat{m}(\e),\e),x_2(\hat{m}(\e),\e))} v_\e'(x;\hat{m}(\e)) = \inf_{s\in(s_\e,1)} \sqrt{H(s;\hat{m}(\e),\e)}=0
$$
and therefore $x_2(\hat{m}(\e),\e)=+\infty$, which implies $\lim_{m\to\hat{m}(\e)^-}x_2(m,\e)=+\infty$. By continuity of $m\mapsto x_2(m,\e)$, we conclude that there exists $m_\e\in(0,\hat{m}(\e))$ such that $x_2(m_\e,\e)=L$ and therefore $v_\e(L;m_\e)=1$, as claimed.

We eventually prove the second part of the statement.
By \eqref{proof:approx-4} it also follows that
\begin{equation*} 
\frac{1}{\e^2}\biggl[i(\e)-\biggl(\frac{(1-m_\e)^2}{4}-\frac{c_\e^2}{f^2(m_\e)}\biggr)\biggr] = \inf_{s\in(s_\e,1)} H(s;m_\e,\e) \leq \frac{16(1-s_\e)^2}{L^2}\,.
\end{equation*}
By elementary manipulations in the previous inequality, and recalling that by construction $(1-m_\e)f(m_\e)<2c_\e$, we find
\begin{equation} \label{proof:approx-5}
\frac{(1-m_\e)^2}{4} \leq \frac{c_\e^2}{f^2(m_\e)} \leq \frac{16\e^2(1-s_\e)^2}{L^2} - i(\e) + \frac{(1-m_\e)^2}{4}\,.
\end{equation}
Suppose that $\inf_\e c_\e>0$. If by contradiction $m_\e\to0$, then by passing to the limit as $\e\to0$ in \eqref{proof:approx-5} we would have that the middle term would tend to $+\infty$, whereas the right-hand side would tend to $\frac14$ (since $i(\e)\to0$ by \eqref{proof:approx-6}). This contradiction proves that if $\inf_\e c_\e>0$ then $\inf_\e m_\e >0$.

Similarly, if  $c_\e\to0$ then by $(1-m_\e)f(m_\e)<2c_\e$ we must have $m_\e\to0$. Again by passing to the limit as $\e\to0$ in \eqref{proof:approx-5} we easily deduce that $\frac{2c_\e}{f(m_\e)}\to1$.
\end{proof}

\begin{proof}[Proof of Theorem~\ref{thm:main2}]
We divide the proof into three cases according to the form of the critical point $\bar{u}$, as in the statement of the theorem.

\medskip\noindent\textit{Case }(i). Assume $\bar{u}(x)=\frac{a}{L}x$ for some $a>0$. In this case it is sufficient to take $u_\e(x)=\frac{a}{L}x$ and $v_\e(x)\equiv1$. Since $f_\eps'(1)=\psi_\e'(1)=0$, it is immediately checked that the pair $(u_\e,v_\e)$ is indeed a solution to \eqref{def:cp1}--\eqref{def:cp4}. 

\medskip\noindent\textit{Case }(ii). Assume $\bar{u}(x)=c_0x + (a-c_0L)\chi_{(\frac{L}{2},L)}(x)$ with $c_0\in(0,\frac{\stress}{2})$ and $g'(a-c_0L)=2c_0$.

We apply Lemma~\ref{lemma:construction-ODE} with $c_\e=c_0$ for all $\e$, to find values $m_\e$ and functions $v_\e$ solving \eqref{ODE:construction} such that $v_\e(0)=v_\e(L)=1$. Notice also that $m_0\defeq\inf_\e m_\e>0$. We define
$$
u_\e(x)\defeq \int_0^x \frac{c_0}{f_\e^2(v_\e(x))}\dd x 
$$
and we obtain that $(u_\e,v_\e)$ is a family of critical points for $\Feps$, i.e.\ they solve the system of equations \eqref{def:cp1}--\eqref{def:cp4} for $a_\e\defeq \int_0^L \frac{c_0}{f_\e^2(v_\e)}\dd x$. To conclude, we need to show that $a_\e\to a$ and that $u_\e\to \bar{u}$ in $L^1([0,L])$.

To this aim, we first show that the equiboundedness of the energy \eqref{eq:bound-energy} holds for the family $(u_\e,v_\e)$. By the construction in Lemma~\ref{lemma:construction-ODE} the function $v_\e$ obeys the equation \eqref{proof:approx-2} (with $c_\e=c_0$ and $m=m_\e$). Denoting by $H_\e(s)\defeq H(s; m_\e,\e)$ the function appearing in \eqref{proof:approx-2}, we have for all $s\in(m_\e,1)$ after some elementary manipulations
\begin{align*}
H_\e(s) & = \frac{1}{4\e^2 f_\e^2(s)}\bigl[ (1-s)^2f_\e^2(s) - (1-m_\e)^2f_\e^2(m_\e) \bigr] + \frac{f_\e^2(s)-f_\e^2(m_\e)}{4\e^2f_\e^2(s)}\Bigl( \frac{4c_0^2}{f^2(m_\e)}-(1-m_\e)^2\Bigr) \\
& \geq  \frac{1}{4\e^2 f_\e^2(s)}\Bigl( (1-s)^2f_\e^2(s) - (1-m_\e)^2f_\e^2(s) \Bigr)
\end{align*}
where we used the monotonicity of the map $f_\e$ and the fact that $(1-m_\e)f(m_\e)\leq 2c_0$ by the construction in Lemma~\ref{lemma:construction-ODE}.
Then, denoting by $\tilde{f}(s)\defeq(1-s)f(s)$ (which is strictly increasing by assumption \ref{ass:f3}), we have
\begin{equation}\label{proof:approx-3b}
\begin{split}
\int_0^L f_\e^2(v_\e)(u_\e')^2\dd x
& = c_0^2\int_{0}^{L}\frac{\dd x}{f_\e^2(v_\e(x))}
\leq c_0^2 \int_{\{v_\e\leq s_\e\}} \frac{|v_\e'|}{f_\e^2(v_\e)\sqrt{H_\e(v_\e)}}\dd x + \frac{c_0^2\,|\{v_\e>s_\e\}|}{f_\e^2(s_\e)} \\
& \leq 2c_0^2 \int_{m_\e}^{s_\e} \frac{\dd s}{f_\e^2(s)\sqrt{H_\e(s)}} + \frac{Lc_0^2}{f_\e^2(s_\e)} \\
& \leq 4c_0^2\int_{m_\e}^{s_\e} \frac{\dd s}{f(s)\bigl(\tilde{f}^2(s)-\tilde{f}^2(m_\e)\bigr)^{1/2}} + \frac{Lc_0^2}{f_\e^2(s_\e)}\,.
\end{split}
\end{equation}
For $s\in(m_\e,s_\e)$ we write $\tilde{f}(s)-\tilde{f}(m_\e)=\tilde{f}'(\zeta_\e(s))(s-m_\e)$ for some point $\zeta_\e(s)\in(m_\e,s)$. Since $\zeta_\e(s)\geq m_\e\geq m_0>0$, in view of assumptions \ref{ass:f3} and \ref{ass:f5} we have that $\tilde{f}'(\zeta_\e(s)) \geq C(1-\zeta_\e(s))^3 \geq C(1-s)^3$ for all $s\in(m_\e,s_\e)$ and for a constant $C$ independent of $\e$. Therefore
\begin{equation} \label{proof:approx-1b}
\begin{split}
\int_0^L f_\e^2(v_\e)(u_\e')^2\dd x
& \leq \frac{4c_0^2}{\sqrt{2C\tilde{f}(m_\e)}}\int_{m_\e}^{s_\e} \frac{\dd s}{f(s)\sqrt{s-m_\e}(1-s)^{3/2}} + \frac{Lc_0^2}{f_\e^2(s_\e)} \\
& \leq \frac{4c_0^2}{\sqrt{2C\tilde{f}(m_\e)}} \frac{1}{\tilde{f}(m_\e)}\int_{m_\e}^{s_\e} \frac{\dd s}{\sqrt{s-m_\e}\sqrt{1-s}} + \frac{Lc_0^2}{f_\e^2(s_\e)}
\leq C'
\end{split}
\end{equation}
for another constant $C'$ uniform in $\e$. By multiplying \eqref{ODE:construction} by $(v_\e-1)$ and integrating, after integration by parts we have
\begin{equation}\label{proof:approx-2b}
\begin{split}
\int_0^L \biggl(\e(v_\e')^2 + \frac{(1-v_\e)^2}{4\e}\biggr)\dd x
& = c_0^2\int_0^L \frac{(1-v_\e)f_\e'(v_\e)}{f_\e^3(v_\e)}\dd x \\
& \leq \sup_{s\in(m_\e,1)}\biggl(\frac{(1-s)f_\e'(s)}{f_\e(s)}\biggr) \int_0^L f_\e^2(v_\e)(u_\e')^2\dd x \leq C''
\end{split}
\end{equation}
for a constant $C''$ independent of $\e$, where the last estimate follows by \eqref{proof:approx-1b} and from the assumptions \ref{ass:f4}, \ref{ass:psi1}, \ref{ass:psi2}, \ref{ass:psi3} and by \eqref{def:seps} (recalling that $m_\e\geq m_0>0$).

Combining \eqref{proof:approx-1b} and \eqref{proof:approx-2b} we obtain that $\sup_{\e}\Feps(u_\e,v_\e)<+\infty$. The critical points $(u_\e,v_\e)$ then satisfy the assumption of Theorem~\ref{thm:main1}. Since $\lim_{\e\to 0}m_\e\in(0,1)$, we are in case~\ref{thm:main-case2} and we can conclude that up to extraction of a subsequence $u_{\e_k}\to u_0$ in $L^1([0,L])$, where $u_0(x)=c_0x+(a_0-c_0L)\chi_{(\frac{L}{2},L)}(x)$, $a_0=\lim_{k}a_{\e_k}$, and $g'(a_0-c_0L)=2c_0$. Since we also have $g'(a-c_0L)=2c_0$ and $g'$ is injective in $(0,\sfrac)$ by Proposition~\ref{prop:g3}, we conclude that $a_0=a$ and $u_0=\bar{u}$. Hence, as the limit of any subsequence of $u_\e$ converges to $\bar{u}$, we conclude that $u_\e\to\bar{u}$ in $L^1([0,L])$.

\medskip\noindent\textit{Case }(iii). Assume that $\sfrac$ is finite (i.e.\ $f'(0)>0$ by Proposition~\ref{prop:sfrac}) and that $\bar{u}(x)=a\chi_{(\frac{L}{2},L)}(x)$ with $a=\sfrac$.

Take any sequence $c_\e\to0$, with $c_\e>0$, and apply Lemma~\ref{lemma:construction-ODE} to find values $m_\e$ and functions $v_\e$ solving \eqref{ODE:construction} such that $v_\e(0)=v_\e(L)=1$. Notice also that $m_\e\to0$ and $\frac{2c_\e}{f(m_\e)}\to1$. We define
$$
u_\e(x)\defeq \int_0^x \frac{c_\e}{f_\e^2(v_\e(x))}\dd x 
$$
and we obtain that $(u_\e,v_\e)$ is a family of critical points for $\Feps$, i.e.\ they solve the system of equations \eqref{def:cp1}--\eqref{def:cp4} for $a_\e\defeq \int_0^L \frac{c_\e}{f_\e^2(v_\e)}\dd x$. To conclude, we need to show that $a_\e\to \sfrac$ and that $u_\e\to \bar{u}$ in $L^1([0,L])$.

As in the previous step, we first show that the equiboundedness of the energy \eqref{eq:bound-energy} holds for the family $(u_\e,v_\e)$. We indeed have, similarly to \eqref{proof:approx-3b}, for $\delta>0$,
\begin{equation*}
\begin{split}
a_\e
& = \int_0^L \frac{c_\e}{f_\e^2(v_\e)}\dd x
\leq 4c_\e\int_{m_\e}^{\delta} \frac{\dd s}{f(s)\bigl(\tilde{f}^2(s)-\tilde{f}^2(m_\e)\bigr)^{1/2}}
+ \frac{4c_\e(1-\delta)}{f(\delta)\bigl(\tilde{f}^2(\delta)-\tilde{f}^2(m_\e)\bigr)^{1/2}} + \frac{Lc_\e}{f_\e^2(s_\e)} \\
& \leq 4c_\e \sup_{s\in(m_\e,\delta)}\biggl(\frac{1-s}{\tilde{f}'(s)}\biggr)\int_{m_\e}^{\delta} \frac{\tilde{f}'(s)\dd s}{\tilde{f}(s)\bigl(\tilde{f}^2(s)-\tilde{f}^2(m_\e)\bigr)^{1/2}} +C_\delta c_\e \\
& = 4c_\e \sup_{s\in(m_\e,\delta)}\biggl(\frac{1-s}{\tilde{f}'(s)}\biggr)\int_{\tilde{f}(m_\e)}^{\tilde{f}(\delta)} \frac{\dd t}{t\bigl(t^2-\tilde{f}^2(m_\e)\bigr)^{1/2}} + C_\delta c_\e \\
& = \frac{4c_\e}{\tilde{f}(m_\e)} \sup_{s\in(m_\e,\delta)}\biggl(\frac{1-s}{\tilde{f}'(s)}\biggr)\arctan\Biggl(\frac{\bigl(\tilde{f}^2(\delta)-\tilde{f}^2(m_\e)\bigr)^{1/2}}{\tilde{f}(m_\e)}\Biggr) + C_\delta c_\e,
\end{split}
\end{equation*}
where $C_\delta$ is a constant depending on $\delta$, for $\e$ is small enough.
Now, using the fact that $c_\e\to0$, $m_\e\to0$, $f'(0)>0$ and that $\frac{2c_\e}{f(m_\e)}\to1$, one can see that the right-hand side in the previous chain of inequalities is uniformly bounded. Therefore
\begin{equation} \label{proof:approx-4b}
\sup_{\e}a_\e <+\infty, 
\qquad
\lim_{\e\to0}\int_0^L f_\e^2(v_\e)(u_\e')^2\dd x = \lim_{\e\to0}c_\e a_\e = 0.
\end{equation}
Coming to the energy of $v_\e$, we fix $\delta\in(0,1)$ and as in \eqref{proof:approx-2b} we have
\begin{equation*}
\begin{split}
\int_0^L \biggl(\e(v_\e')^2 + \frac{(1-v_\e)^2}{4\e}\biggr)\dd x
& = c_\e^2\int_0^L \frac{(1-v_\e)f_\e'(v_\e)}{f_\e^3(v_\e)}\dd x \\
& \leq \frac{c_\e}{f(m_\e)}\biggl(\sup_{s\in(m_\e,\delta)}f'(s)\biggr)\int_{\{v_\e\leq\delta\}} \frac{ c_\e }{f_\e^2(v_\e)}\dd x \\
& \qquad + \sup_{s\in(\delta,1)}\biggl(\frac{(1-s)f_\e'(s)}{f_\e(s)}\biggr) \int_{\{v_\e>\delta\}} f_\e^2(v_\e)(u_\e')^2\dd x.
\end{split}
\end{equation*}
Again by \eqref{proof:approx-4b}, by $\frac{2c_\e}{f(m_\e)}\to1$, and by all the assumptions on $f_\e$, it is possible to check that the previous quantities are uniformly bounded with respect to $\e$. Hence $\sup_{\e}\Feps(u_\e,v_\e)<+\infty$.

The critical points $(u_\e,v_\e)$ then satisfy the assumption of Theorem~\ref{thm:main1}. Since $\lim_{\e\to 0}m_\e=0$, we are in case~\ref{thm:main-case3} and, as at the end of the previous step, we can conclude that $u_\e\to\bar{u}$ in $L^1([0,L])$ and $a=\sfrac$.
\end{proof}


\bigskip
\subsection*{Acknowledgments}
The authors are thankful to Cinzia Soresina for fruitful conversations and to Matteo Focardi for valuable suggestions. MB and FI are members of the GNAMPA group of the Istituto Nazionale di Alta Matematica (INdAM).


\bibliographystyle{siam}
\def\url#1{}
\bibliography{references}

\end{document}